\setlist[enumerate]{format=\normalfont}
\newtheorem{theorem}{Theorem}[section]
\newtheorem{corollary}[theorem]{Corollary}
\newtheorem{proposition}[theorem]{Proposition}
\newtheorem{lemma}[theorem]{Lemma}
\newtheorem*{theorem*}{Theorem}
\theoremstyle{definition}
\newtheorem{definition}[theorem]{Definition}
\newtheorem{example}[theorem]{Example}
\newtheorem{remark}[theorem]{Remark}
\newtheorem{setup}[theorem]{Setup}
\newtheorem{algorithm}[theorem]{Algorithm}
\newtheorem{construction}[theorem]{Construction}
\newcommand{\ac}{\mathcal{A}}
\newcommand{\cc}{\mathcal{C}}
\newcommand{\fc}{\mathcal{F}}
\newcommand{\mc}{\mathcal{M}}
\newcommand{\tc}{\mathcal{T}}
\newcommand{\ucc}{\mathfrak{U}}
\newcommand{\uc}{\mathcal{U}}
\newcommand{\vc}{\mathcal{V}}
\newcommand{\xc}{\mathcal{X}}
\newcommand{\kk}{\mathbbm{k}}
\newcommand{\os}{\operatorname{os}}
\newcommand{\modA}{\operatorname{mod}A}
\renewcommand{\mod}{\operatorname{mod}}
\newcommand{\Ext}{\operatorname{Ext}}
\newcommand{\End}{\operatorname{End}}
\newcommand{\Coker}{\operatorname{Coker}}
\newcommand{\Hom}{\operatorname{Hom}}
\newcommand{\Rad}{\operatorname{Rad}}
\newcommand{\add}{\operatorname{add}}
\newcommand{\dq}{\operatorname{dq}}
\newcommand{\tor}{\operatorname{tors}}
\newcommand{\dtor}{d\text{-}\!\operatorname{tors}}
\newenvironment{psmallmatrix}
{\left(\begin{smallmatrix}}
	{\end{smallmatrix}\right)}
\newcommand{\Rep}[1]{%
	{%
		\small%
		\begin{matrix}%
			#1%
		\end{matrix}%
	}%
}
\newcommand{\rep}[1]{%
	{%
		\tiny%
		\begin{matrix}%
			#1%
		\end{matrix}%
	}%
}
\newcommand{\marginparstretch}{0.6}
\let\oldmarginpar\marginpar
\renewcommand\marginpar[1]{\-\oldmarginpar[\framebox{\setstretch{\marginparstretch}\begin{minipage}{\marginparwidth}{\raggedleft\tiny #1}\end{minipage}}]{\framebox{\setstretch{\marginparstretch}\begin{minipage}{\marginparwidth}{\raggedright\tiny #1}\end{minipage}}}}
\title{A characterisation of higher torsion classes}
\author[August]{Jenny August}
\address{Department of Mathematics\\ Aarhus Universitet\\ Ny Munkegade 118\\ DK-8000 Aarhus C\\ Denmark}
\email{jennyaugust@math.au.dk}
\author[Haugland]{Johanne Haugland}
\address{Department of Mathematical Sciences\\ 
	NTNU\\ 
	NO-7491 Trondheim\\ 
	Norway}
\email{johanne.haugland@ntnu.no}
\author[Jacobsen]{Karin M.\ Jacobsen}
\address{Department of Mathematics\\ Aarhus Universitet\\ Ny Munkegade 118\\ DK-8000 Aarhus C\\ Denmark}
\email{karin.jacobsen@ntnu.no}
\author[Kvamme]{Sondre Kvamme}
\address{Department of Mathematical Sciences\\ 
	NTNU\\ 
	NO-7491 Trondheim\\ 
	Norway}
\email{sondre.kvamme@ntnu.no}
\author[Palu]{Yann Palu}
\address
{LAMFA, Universit\'e de Picardie Jules Verne, 33, rue Saint-Leu 80039 Amiens, France}
\email{yann.palu@u-picardie.fr}
\author[Treffinger]{Hipolito Treffinger}
\address{IMAS-CONICET, Pabellon I – Ciudad Universitaria, Buenos Aires, 1428, Argentina}
\email{htreffinger@dm.uba.ar}
\begin{document}
	
	\keywords{%
		Higher homological algebra, $d$-abelian category, $d$-cluster tilting subcategory, torsion class, $d$-torsion class, higher Auslander algebra, higher Nakayama algebra.}
	\subjclass[2020]{18E40, 16S90, 18E10, 18G25, 18G99.}
	
	\maketitle
	
	\begin{abstract}
		Let $\ac$ be an abelian length category containing a $d$-cluster tilting subcategory $\mc$. We prove that a subcategory of $\mc$ is a $d$-torsion class if and only if it is closed under $d$-extensions and $d$-quotients. This generalises an important result for classical torsion classes. As an application, we prove that the $d$-torsion classes in $\mc$ form a complete lattice. Moreover, we use the characterisation to classify the $d$-torsion classes associated to higher Auslander algebras of type $\mathbb{A}$, and give an algorithm to compute them explicitly. The classification is furthermore extended to the setup of higher Nakayama algebras.
	\end{abstract}

	\setcounter{tocdepth}{1}
	\tableofcontents
	
	\section{Introduction}
	
	The notion of torsion pairs was introduced for arbitrary abelian categories in \cite{Dickson} to generalise the properties of the class of torsion groups in the category of abelian groups. Since then, torsion theories, and the related notion of $t$-structures \cite{BeilinsonBernsteinDeligne} for triangulated categories, have become ubiquitous in representation theory, homological algebra, and algebraic geometry. Within these areas, torsion theories and $t$-structures play a key role in topics such as (perverse) sheaves \cite{BeilinsonBernsteinDeligne}, tilting theory and its generalisations \cite{BrennerButler, AdachiIyamaReiten, AngeleriMarksVitoria, Mutation and torsion pairs}, and stability conditions \cite{Bridgeland}.
	
	Meanwhile, higher homological algebra has become an increasingly active field of research since its introduction in  \cite{Iyama2007a, Iyama2007b, Iyama2011}. It has found applications in algebraic K-theory \cite{DyckerhoffJassoWalde}, wrapped Floer theory in symplectic geometry \cite{DyckerhoffJassoLekili}, and in algebraic geometry where it was an important ingredient in the proof of the Donovan-Wemyss conjecture \cite{JassoKellerMuro}.
	Originally motivated by Auslander--Reiten theory, cluster tilting theory and the classical Auslander correspondence, one studies categories where the role of short exact sequences (or distinguished triangles) is taken by longer sequences. Examples include $d$-abelian and $d$-exact categories \cite{Jasso}, $(d+2)$-angulated categories \cite{GeissKellerOppermann}, and $d$-exangulated categories \cite{HerschendLiuNakaoka}. Here, the positive integer $d$ controls the length of the important sequences, with $d=1$ coinciding with the classical cases.
	
	By work of \cite{Jasso, Kvamme, Ebrahimi}, studying $d$-abelian categories is equivalent to studying so called $d$-cluster tilting subcategories of abelian categories. Our setup will be the latter, where we assume the ambient abelian category to be of finite length (see \cref{subsec:d-abelian categories} for details).
	Many important classical concepts in representation theory generalise to this setting. In this paper, we focus on the higher analogue of torsion classes, namely $d$-torsion classes, introduced in \cite{Jorgensen} and further studied in \cite{AJST}.
	
	A fundamental result in the study of torsion classes states that a subcategory of an abelian length category is a torsion class if and only if it is closed under extensions and quotients \cite{Dickson}. This result is of crucial importance, as it both allows for the detection of torsion classes, and moreover gives properties which play a key role in many proofs related to these objects. A higher-dimensional version of this classical characterisation would hence be a substantial advancement. The main result of this paper gives such a characterisation of $d$-torsion classes in terms of closure under $d$-extensions and $d$-quotients (see \cref{def: d-ext closed,def: dq closure}). 
	Note that throughout this paper we assume subcategories to be closed under finite direct sums and summands, see the subsection on conventions and notation below.
	
	\begin{theorem}[\cref{cor: characterisation}] \label{intro: characterisation}
		Let $\mc$ be a $d$-cluster tilting subcategory of an abelian length category $\ac$. A subcategory of $\mc$ is a $d$-torsion class if and only if it is closed under \mbox{$d$-extensions} and $d$-quotients.
	\end{theorem}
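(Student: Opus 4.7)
I would treat the two implications separately. For the forward direction, a $d$-torsion class $\uc$ comes equipped with a complementary class $\vc$ and, for each $M \in \mc$, a canonical $d$-exact sequence linking the two. Given a $d$-exact sequence in $\mc$ whose ends lie in $\uc$, I would splice it with the canonical sequences at its middle terms; the orthogonality between $\uc$ and $\vc$ then forces each middle term to be orthogonal to $\vc$, and hence to lie in $\uc$. The argument for closure under $d$-quotients is analogous, running on the opposite end of the defining sequence.

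The reverse direction is the substantive content. Suppose $\uc \subseteq \mc$ is closed under $d$-extensions and $d$-quotients. I would define $\vc$ as the right-orthogonal subcategory of $\uc$ in $\mc$ with respect to the vanishing conditions prescribed by the paper's notion of $d$-torsion pair (e.g.\ $\Hom_\mc(\uc, -) = 0$ together with the relevant higher $\Ext$-vanishing), and for each $M \in \mc$ construct a $d$-exact sequence whose $\uc$-term maps onto $M$ and whose remaining terms lie in $\vc$. The natural candidate for the $\uc$-term is the image in $\mc$ of the largest subobject of $M$ (taken in $\ac$) that lies in $\uc$; this is well-defined by the length hypothesis on $\ac$ combined with extension-closure of $\uc$ (so that the sum of two $\uc$-subobjects is again in $\uc$). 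From the short exact sequence produced in $\ac$, the $d$-cluster tilting property of $\mc$ allows one to extend the kernel into a full $d$-exact sequence in $\mc$.

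The technical core of the proof is then the verification that the remaining terms of this $d$-exact sequence lie in $\vc$. I would argue by contradiction: a nonzero morphism from some $U \in \uc$ into one of these terms would, by tracing along the $d$-exact sequence, produce a $\uc$-subobject of $M$ strictly larger than $U_0$. Here the $d$-quotient-closure is essential for extracting a $\uc$-subobject from a partial lift, while $d$-extension-closure is needed for reassembling pieces along the sequence into a strictly larger $\uc$-subobject of $M$. The main obstacle is making this lifting argument work in the higher setting: unlike the classical case $d=1$, where the single short exact sequence $0 \to U_0 \to M \to M/U_0 \to 0$ suffices, one must pass through $d$ middle terms and cannot rely on the standard kernel/cokernel manipulations inside $\mc$, since $\mc$ is only $d$-abelian and all such constructions must be performed in $\ac$ while remaining compatible with the $d$-cluster tilting structure.
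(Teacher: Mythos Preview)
Your reverse-direction strategy has the right shape but contains a genuine gap at its foundation. You claim the largest $\uc$-subobject of $M$ exists because ``the sum of two $\uc$-subobjects is again in $\uc$'' by extension-closure. This fails in the higher setting: if $U_1, U_2 \subseteq M$ lie in $\uc$, their sum $U_1 + U_2$ is computed in $\ac$ and need not belong to $\mc$ at all, so neither $d$-extension-closure nor $d$-quotient-closure applies directly. The paper resolves this with a new technical result, a \emph{coimage factorisation} in $\mc$ (Proposition~\ref{Lemma: Coimage factorization}): every morphism in $\mc$ factors as a composite of left minimal weak cokernels followed by a monomorphism. Applied to $U_1 \oplus U_2 \to M$, this produces a subobject $W \subseteq M$ with $W \in \mc$ and containing both $U_i$, and then $d$-quotient-closure (not extension-closure) forces $W \in \uc$. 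This factorisation lemma is the missing idea in your sketch and is highlighted in the paper as being of independent interest.

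There is also a mismatch with the definition. The paper's $d$-torsion class (Definition~\ref{def:ntorsionclass}) does not require the tail terms $V_1, \ldots, V_d$ to lie in any orthogonal class $\vc$; it requires the \emph{complex} $V_1 \to \cdots \to V_d$ to be $\uc$-exact, i.e.\ $\Hom_\mc(U,-)$ applied to it is exact with zeros at both ends. Most of this exactness comes for free from $d$-exactness of the full sequence together with $U_M \to M$ being a right $\uc$-approximation; only surjectivity of $\Hom_\mc(U,V_{d-1}) \to \Hom_\mc(U,V_d)$ needs a separate argument, which the paper handles cleanly by a $d$-pullback and $d$-extension-closure rather than by enlarging $U_M$. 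Your contradiction strategy might be repairable for this step, but as written it targets the wrong condition.

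Finally, your forward direction is under-specified. The paper's argument relies essentially on \cite{AJST} (Theorem~\ref{thm:torsion-to-dtorsion}), which supplies an ambient torsion class $\tc \subseteq \ac$ with $\uc = \tc \cap \mc$; closure under $d$-quotients then follows from Lemma~\ref{lemma: minimal M-appr. in U} and the explicit construction of minimal $d$-cokernels, while closure under $d$-extensions is proved by a homotopy argument comparing a given minimal $d$-extension with the canonical sequence for its first middle term. Your ``splicing and orthogonality'' sketch does not obviously reproduce either of these, and in particular you would need some substitute for the ambient torsion class in $\ac$.
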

	
	Note that while a different characterisation of $d$-torsion classes using the bounded derived category of $\ac$ has been given in the special case where $\ac$ is the module category of a $d$-representation finite $d$-hereditary algebra \cite{Jorgensen}, the characterisation in \cref{intro: characterisation} works generally, does not require use of derived categories, and is closer to the classical result for torsion classes.
	
	The proof of \cref{intro: characterisation} makes significant use of the main result in \cite{AJST}, which relates $d$-torsion classes in $\mc$ to torsion classes in $\ac$. This allows us to apply results about torsion classes also in the higher setup. As another key ingredient in the proof of \cref{intro: characterisation}, we give a higher generalisation of the classical factorisation of a morphism in an abelian category as the composition of an epimorphism followed by a monomorphism, see \cref{Lemma: Coimage factorization}. This result is of independent interest, and we expect it to play a role in providing answers to other  questions in higher homological algebra. 
	
	With \cref{intro: characterisation} in hand, we are able to generalise other well-known results about torsion classes to the higher setting. For example, an immediate consequence of combining \cref{intro: characterisation} with a result in \cite{Klapproth}, is that every $d$-torsion class carries the structure of a $d$-exact category, see \cref{d-torsion is d-exact}.
	
	Other important results concerning classical torsion classes include the study of their poset structure. The set $\tor(\ac)$ of torsion classes in $\ac$ has a natural partial order given by inclusion, and this poset is actually a complete lattice, with meet given by intersection, see e.g.\ \cite[Proposition 2.3]{IyamaReitenThomasTodorov}. \cref{intro: characterisation} allows us to give the following generalisation of this result.
	
	\begin{theorem}[\cref{theorem:lattice}]\label{thm: intro lattice}
		Let $\mc$ be a $d$-cluster tilting subcategory of an abelian length category $\ac$.
		Then the set $\dtor(\mc)$ of all $d$-torsion classes in $\mc$ is a complete lattice with meet given by intersection.
	\end{theorem}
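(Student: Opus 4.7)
The plan is to deduce everything from the characterisation in \cref{intro: characterisation}. Once $d$-torsion classes are identified with the subcategories of $\mc$ closed under $d$-extensions and $d$-quotients, both closure conditions are preserved by arbitrary intersections, and the standard fact that a poset admitting arbitrary meets together with a maximum is automatically a complete lattice then supplies the joins.

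In detail, let $\{\uc_i\}_{i\in I}$ be a family of $d$-torsion classes in $\mc$ and set $\uc = \bigcap_{i\in I} \uc_i$. By \cref{intro: characterisation}, each $\uc_i$ is closed under $d$-quotients and $d$-extensions. If $X \in \uc$, then $X$ lies in every $\uc_i$, so any $d$-quotient of $X$ lies in each $\uc_i$, hence in $\uc$. Analogously, if the outer terms of a $d$-extension belong to $\uc$, then they lie in every $\uc_i$, so the middle terms do too, and therefore lie in $\uc$. Invoking \cref{intro: characterisation} in the converse direction, $\uc$ is itself a $d$-torsion class. This shows simultaneously that $\dtor(\mc)$ is closed under arbitrary intersections and that intersection realises the meet.

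For joins, observe that $\mc$ trivially satisfies both closure conditions, so $\mc \in \dtor(\mc)$ by \cref{intro: characterisation}. Consequently, for any family $\{\uc_i\}_{i\in I}$ in $\dtor(\mc)$, the collection
\[
\{\vc \in \dtor(\mc) \mid \uc_i \subseteq \vc \text{ for all } i \in I\}
\]
is non-empty. Its intersection is a $d$-torsion class by the previous paragraph, and it is the smallest $d$-torsion class containing every $\uc_i$; it therefore serves as the join $\bigvee_{i\in I} \uc_i$.

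I do not anticipate a substantial obstacle: given \cref{intro: characterisation}, the argument is essentially formal. The only point that might warrant a closer look is confirming that ``closed under $d$-extensions'' really transfers to arbitrary intersections in the sense used above, which reduces to noting that the property is a universal implication quantified over $d$-extensions in $\ac$ whose outer terms lie in the subcategory.
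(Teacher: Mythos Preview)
Your overall approach matches the paper's: reduce to the characterisation in \cref{intro: characterisation}, show that $d$-torsion classes are stable under arbitrary intersection, and deduce the lattice structure. The paper packages the existence of joins via a standard lemma (any poset with arbitrary meets is a complete lattice), while you spell out the join as the meet of all upper bounds; these are equivalent.

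There is, however, a genuine gap in the intersection step. You assert that if the outer terms of a $d$-extension lie in every $\uc_i$, then ``the middle terms do too'', and you justify this at the end by saying the closure condition is ``a universal implication''. But the definition of \emph{closed under $d$-extensions} is not universal: it says that for any $d$-extension with outer terms in $\uc_i$, there \emph{exists} an equivalent $d$-extension with all terms in $\uc_i$. This is a $\forall\exists$ statement, and the witnessing equivalent $d$-extension may depend on $i$, so one cannot conclude directly that a single equivalent $d$-extension lies in the intersection. The same issue arises for $d$-quotients: closure means \emph{some} $d$-cokernel has terms in $\uc_i$, not that every $d$-cokernel does.

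The fix is straightforward and is exactly what the paper uses: pass to \emph{minimal} $d$-extensions and minimal $d$-cokernels. Each $\uc_i$, being closed under $d$-quotients, is closed under direct summands; combined with closure under $d$-extensions, this forces the middle terms of the (essentially unique) minimal $d$-extension to lie in $\uc_i$. Since the minimal $d$-extension is independent of $i$, its middle terms lie in $\bigcap_i \uc_i$, and this suffices to verify closure under $d$-extensions for the intersection. The argument for $d$-quotients is analogous.
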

	
	In the classical setting, lattice-theoretic properties of $\tor(\ac)$ form an area of active research \cite{AP,GM,Jasso2}, which is intimately related to representation theory \cite{BCZ,DIRRT}. We refer to \cite{Thomas} for an introductory survey on the topic.  \cref{thm: intro lattice} opens up a new avenue of research in higher homological algebra through the study of the lattice of higher torsion classes.
	
	The classical characterisation of torsion classes as those subcategories that are closed under extensions and quotients, is often used to determine if a given subcategory is a torsion class, or to compute the smallest torsion class containing that given subcategory. Crucial to the success of this approach, is that we often have a good understanding of the middle terms in extensions. To use \cref{intro: characterisation} in an analogous way to determine $d$-torsion classes, we therefore need an understanding of middle terms in $d$-extensions, which is notably more complicated. The following result simplifies this problem significantly. 
	
	\begin{theorem}[\cref{prop: extension closure indec}] \label{intro: indec}
		Let $\mc$ be a $d$-cluster tilting subcategory of an abelian length category $\ac$. Suppose $\uc \subseteq \mc$ is closed under  under $d$-extensions with indecomposable end terms and all $d$-quotients. Then $\uc$ is closed under all $d$-extensions.
	\end{theorem}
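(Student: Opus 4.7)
My plan is to induct on the total number of indecomposable direct summands of $X_0 \oplus X_{d+1}$, where $\eta \colon 0 \to X_0 \to X_1 \to \cdots \to X_{d+1} \to 0$ is a $d$-exact sequence with $X_0, X_{d+1} \in \uc$. The base case, where both $X_0$ and $X_{d+1}$ are indecomposable, is exactly the hypothesis on $\uc$, so the work is concentrated in the inductive step, where I reduce the decomposable case to the base case via pullbacks and pushouts of $d$-exact sequences in the $d$-cluster tilting subcategory $\mc$.

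Before starting the induction, I would observe that $\uc$ is closed under direct summands: if $X = Y \oplus Z \in \uc$, the trivial $d$-exact sequence $0 \to 0 \to \cdots \to 0 \to Z \to X \to Y \to 0$ exhibits $Y$ as a $d$-quotient of $X$, so $Y \in \uc$ by assumption. For the inductive step, assume without loss of generality that $X_{d+1} = Y \oplus Z$ is a nontrivial decomposition; the case of decomposable $X_0$ is handled dually by pushouts along a projection $X_0 \twoheadrightarrow A$. Since $\mc$ is $d$-cluster tilting, pullbacks of $d$-exact sequences along morphisms into the rightmost term exist inside $\mc$. Pulling back $\eta$ along $\iota_Y \colon Y \hookrightarrow X_{d+1}$ and along $\iota_Z \colon Z \hookrightarrow X_{d+1}$ produces $d$-exact sequences
\[
\eta_Y \colon 0 \to X_0 \to X_1^Y \to \cdots \to X_d^Y \to Y \to 0, \qquad \eta_Z \colon 0 \to X_0 \to X_1^Z \to \cdots \to X_d^Z \to Z \to 0,
\]
each equipped with a morphism to $\eta$ that is the identity on $X_0$. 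Since $X_0 \oplus Y$ and $X_0 \oplus Z$ have strictly fewer indecomposable summands than $X_0 \oplus X_{d+1}$, the inductive hypothesis gives that all $X_i^Y$ and $X_i^Z$ lie in $\uc$.

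The main obstacle is then to transfer this information back to the actual middle terms $X_i$ of $\eta$. In the classical case $d = 1$, the pullback square produces a secondary short exact sequence $0 \to X_1^Y \to X_1 \to Z \to 0$ with both end terms already in $\uc$, and the inductive hypothesis (applied once more, now to an $X_{d+1}$-end of strictly fewer indecomposable summands than the original) concludes. For $d \geq 2$, an analogous secondary $d$-exact sequence in $\mc$ is considerably more delicate, since the naive termwise cokernel complex of the morphism $\eta_Y \to \eta$ lives in $\ac$ and need not remain in $\mc$. The plan is to build such a secondary $d$-exact sequence inside $\mc$ by combining $\eta_Y$ and $\eta_Z$ through a higher-dimensional analogue of the Baer sum, exploiting the $d$-abelian structure of $\mc$ and the higher-dimensional epi--mono factorisation of morphisms established earlier in the paper. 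Combined with closure of $\uc$ under $d$-quotients, this should yield a secondary $d$-exact sequence whose end terms are in $\uc$ and whose rightmost end term has strictly fewer indecomposable summands than $X_{d+1}$, at which point a final application of the inductive hypothesis finishes the proof.
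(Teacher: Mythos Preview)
Your overall strategy—induct on the number of indecomposable summands of the two end terms and reduce via pullbacks/pushouts—is close in spirit to the paper's, but there are two genuine gaps.

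First, the claim that the case of decomposable $X_0$ is ``handled dually by pushouts along a projection $X_0 \twoheadrightarrow A$'' does not go through as stated. The hypotheses on $\uc$ are not self-dual: you have closure under $d$-quotients, not $d$-subobjects, so a straight dualisation fails. The paper treats this case separately (\cref{lem: indec first term}) and the argument is genuinely asymmetric: after pushing out along a projection $X \to X_1$ with $X_1$ indecomposable, one obtains a new minimal $d$-extension $0 \to X_2 \to E_1 \to F_1 \to G_2 \to \cdots \to G_d \to 0$ in which $F_1 \in \uc$ is already known, and the remaining terms $G_i$ are forced into $\uc$ by closure under \emph{$d$-quotients}, not by any dual property. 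Only then can one iterate on the fewer summands of $X_2$.

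Second, and more seriously, the heart of your inductive step—``build a secondary $d$-exact sequence inside $\mc$ by combining $\eta_Y$ and $\eta_Z$ through a higher-dimensional analogue of the Baer sum''—is not carried out. This is precisely the hard part, and the paper's mechanism is different from what you sketch. Rather than a Baer-sum construction or the epi--mono factorisation, the paper first reduces (via \cref{lem: indec first term}) to $X_0$ indecomposable, then pulls back along the inclusions of \emph{all} indecomposable summands $Y_j$ of $X_{d+1}$ at once. The resulting morphism from the direct sum $\bigoplus_j \eta_{Y_j}$ to $\eta$ is a $d$-pushout, so its \emph{mapping cone} is a $d$-exact sequence
\[
0 \to \bigoplus_j X_0 \to X_0 \oplus \bigoplus_j F_{j,1} \to E_1 \oplus \bigoplus_j F_{j,2} \to \cdots \to E_d \to 0.
\]
The second term lies in $\uc$, so closure under $d$-quotients controls the later terms—but not quite directly, since this cone need not be minimal. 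A careful radical argument then shows that $E_1$ is a direct summand of the relevant term of the minimal $d$-cokernel, hence $E_1 \in \uc$; the remaining $E_i$ follow from \cref{lem:second term}. Your proposal does not supply any of this machinery, and the vague appeal to a Baer-sum analogue does not obviously produce a $d$-exact sequence with the required properties.
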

	
	This theorem is of independent interest, as extension closure is a useful concept across many areas of representation theory. In this paper, we focus on using \cref{intro: indec} to classify $d$-torsion classes in concrete examples. 
	
	We apply our results to higher Auslander algebras of type $\mathbb{A}$ \cite{Iyama2011} and higher Nakayama algebras of type $\mathbb{A}$ and $\mathbb{A}_\infty^\infty$ \cite{Higher Nakayama}. The module category of each such algebra contains a $d$-cluster tilting subcategory, and their combinatorial descriptions due to \cite{OppermannThomas} and \cite{Higher Nakayama} make them an ideal testing ground for new results in higher homological algebra. Higher Auslander algebras are particularly important, as their derived categories are equivalent to certain partially wrapped Fukaya categories  \cite{DyckerhoffJassoLekili}. We use \cref{intro: characterisation,intro: indec} to give a combinatorial description of all $d$-torsion classes associated to these algebras, where the classification results for the three families are given in \cref{thm:higherAuslanderalg,thm:higherNakayamaAlg,thm:higherNakayamaAlgInfinite}, respectively.
	Moreover, 
	we implement our results in algorithms that compute and count all $d$-torsion classes, see \cref{tab:Auslander torsion classes} and \cref{rem:results-higher-Nakayama}.
	
	We expect that the results we present in this article will provide tools for any further study of $d$-torsion classes and will be of importance in building bridges between $d$-torsion classes and other subjects in representation theory and beyond. This is demonstrated in a forthcoming paper \cite{AHJKPT2}, where \cref{intro: characterisation} is applied to establish a connection between functorially finite $d$-torsion classes, $\tau_d$-tilting theory and $(d+1)$-term silting objects.
	
	\subsection*{Structure of the paper}
	
	In \cref{sec:background} we give an overview of the definitions and background  used in the rest of the paper. In \cref{sec: general results} we prove \cref{intro: characterisation,intro: indec}, while \cref{thm: intro lattice} is shown in \cref{sec:lattice}. \cref{sec: higher AA} is dedicated to the study of $d$-torsion classes associated to higher Auslander algebras. Finally, we extend our view to higher Nakayama algebras in \cref{sec:Higher NA}.
	
	\subsection*{Conventions and notation}
	
	Throughout this paper, let $d$ denote a positive integer and $\ac$ an essentially small abelian category. We always assume $\ac$ to be a finite length category, which implies that the Krull--Remak--Schmidt property is satisfied, see \cite[Lemma 5.1 and Theorem 5.5]{Krause}. 
	
	We let $\kk$ be a field. Given a finite-dimensional $\kk$-algebra $A$, the notation $\modA$ is used for the category of finitely presented right $A$-modules. 
	
	Arrows in a quiver are composed from left to right, meaning that we write $ab$ for the path starting in the source of $a$ and ending in the target of $b$.
	
	All subcategories are assumed to be full and closed under isomorphisms and finite direct sums. They are also assumed to be closed under direct summands. For a collection of objects $\xc$ in an additive category $\cc$, we denote by $\add(\xc)$ the smallest subcategory of $\cc$ which contains $\xc$ and is closed under finite direct sums and direct summands.
	
	\section{Background and preliminaries}\label{sec:background}
	
	In this section we provide an overview of definitions and results which give the foundation for the rest of the paper. Before we start discussing notions from higher homological algebra, we recall some terminology related to subcategories and approximations.
	
	Let $\xc$ be a subcategory of the abelian category $\ac$. We say that $\xc$ is \textit{generating} if any object in $\ac$ is a quotient of an object in $\xc$; that is, for every $Y \in \ac$, there exists an exact sequence $X \to Y \to 0$ with $X \in \xc$. Dually, we can define the notion of \textit{cogenerating}, and we call a subcategory \textit{generating-cogenerating} if it is both generating and cogenerating.
	
	Given an object $Y \in \ac$, a morphism $f \colon Y \rightarrow X$ with $X\in\xc$ is a \textit{left $\xc$-approximation} of $Y$ if any morphism $Y \rightarrow X'$ with $X'\in\xc$ factors through $f$. The subcategory $\xc$ is called \textit{covariantly finite} if every object in $\ac$ admits a left $\xc$-approximation. The notions of \mbox{\textit{right $\xc$-approximations}} and \textit{contravariantly finite} subcategories are defined dually, and a subcategory is \textit{functorially finite} if it is both covariantly and contravariantly finite. 
	
	Recall that a morphism $f \colon X \rightarrow Y$ is called \textit{left minimal} if any endomorphism $g$ of $Y$ satisfying $g \circ f = f$ is an isomorphism. A \textit{minimal left $\xc$-approximation} is a left $\xc$-approximation which is also left minimal. \textit{Right minimal morphisms} and \textit{minimal right} $\xc$\textit{-approximations} are defined dually. Since $\ac$ is Krull--Schmidt, an object has a left (right) $\xc$-approximation if and only if it has a minimal left (right) $\xc$-approximation (\cite[Corollary 1.4]{KrauseSaorin}).
	
	\subsection{\texorpdfstring{$d$}{d}-cluster tilting subcategories and \texorpdfstring{$d$}{d}-abelian categories}
	\label{subsec:d-abelian categories}
	
	The theory of higher homological algebra originated in \cite{Iyama2007a,Iyama2007b} with the study of \textit{$d$-cluster tilting subcategories}. The definition is given below.
	
	\begin{definition}
		A functorially finite generating-cogenerating subcategory $\mc$ of the abelian category $\ac$ is \textit{$d$-cluster tilting} if
		\begin{align*}
			\mc  &= \{X \in \ac \mid \Ext_\ac^i(X,M)=0 \text{ for  $M \in \mc$ and  $i = 1,\ldots, d-1$}\}\\
			&= \{Y \in \ac \mid \Ext_\ac^i(M,Y)=0 \text{ for $M \in \mc$ and $i = 1,\ldots, d-1$}\}.
		\end{align*}
	\end{definition}
	
	To formalise the homological structure of $d$-cluster tilting subcategories, Jasso introduced \textit{$d$-abelian categories}, where the case $d=1$ recovers the classical notion of abelian categories \cite{Jasso}. To give a precise definition, we first recall some terminology. 
	
	Let $\mc$ be an additive category and recall that a \textit{weak cokernel} of a morphism $f \colon X \to Y$ in $\mc$ is a morphism $g\colon Y \to Z$ in $\mc$ for which the induced sequence
	\[
	\Hom_\mc(Z,M) \to \Hom_\mc(Y,M) \to \Hom_\mc(X,M)
	\]
	is exact for any $M\in\mc$. This is equivalent to saying that $g \circ f = 0$ and that for any $g'\colon Y \to M$ with $g' \circ f = 0$, there exists a (not necessarily unique) morphism $h \colon Z \to M$ such that $h \circ g = g'$. We call a morphism $g$ a weak cokernel if there exists a morphism $f$ such that $g$ is a weak cokernel of $f$.
	
	A \textit{$d$-cokernel} of a morphism $f_0\colon X_0 \to X_1$ in $\mc$ is given by a sequence of morphisms 
	\begin{align*}
		X_1 \xrightarrow{f_1} X_2 \xrightarrow{f_2} \dots \xrightarrow{f_{d-1}} X_{d} \xrightarrow{f_d} X_{d+1} \to 0
	\end{align*}
	in $\mc$ such that for every $M$ in $\mc$, the sequence
	\[
	0 \rightarrow \Hom_\mc(X_{d+1}, M) \rightarrow \Hom_\mc(X_d, M) \rightarrow \cdots \rightarrow \Hom_\mc(X_1, M) \rightarrow \Hom_\mc(X_0, M) 
	\]
	of abelian groups is exact. Such a $d$-cokernel is sometimes simply denoted by $(f_1, \dots, f_{d})$, and $(f_1, \dots, f_{d})$ is a $d$-cokernel of $f_0$ if and only if $f_i$ is a weak cokernel of $f_{i-1}$ for $i= 1,\ldots, d-1$ and $f_d$ is the cokernel of $f_{d-1}$. The notion of a \textit{$d$-kernel} in $\mc$ is defined dually. A sequence
	\begin{align}
		0 \to X_0 \xrightarrow{f_0} X_1 \xrightarrow{f_1} \dots \xrightarrow{f_{d-1}} X_{d} \xrightarrow{f_d} X_{d+1} \to 0 \label{eq: d ext}
	\end{align}
	in $\mc$ is called a \textit{$d$-exact sequence} or a \textit{$d$-extension} if $(f_1, \dots, f_{d})$ is a $d$-cokernel of $f_0$ and $(f_0, \dots, f_{d-1})$ is a $d$-kernel of $f_d$. 
	
	\begin{definition}\cite[Definition 3.1]{Jasso}
		An additive category $\mc$ is \textit{$d$-abelian} if it is idempotent complete, every morphism admits a $d$-kernel and a $d$-cokernel, and every monomorphism $f_0$ (resp.\ epimorphism $f_d$) fits into a $d$-exact sequence of the form \eqref{eq: d ext}.
	\end{definition}
	
	A $d$-exact sequence  of the form \eqref{eq: d ext} is said to be \textit{equivalent} to a $d$-exact sequence 
	\[
	0 \to X_0 \xrightarrow{f'_0} X'_1 \xrightarrow{f'_1} \dots \xrightarrow{f'_{d-1}} X'_{d} \xrightarrow{f'_d} X_{d+1} \to 0
	\]
	if there exists a commutative diagram
	\[
	\begin{tikzcd}[column sep=22, row sep=30]
		0 \arrow[r] & X_0 \arrow[r,"f_0"] \arrow[d, equal] & X_1 \arrow[r, "f_1"] \arrow[d] & \cdots \arrow[r, "f_{d-1}"] & X_d \arrow[r, "f_d"] \arrow[d] & X_{d+1} \arrow[d, equal]  \arrow[r]  & 0\\
		0 \arrow[r] & X_0 \arrow[r, "f'_0"] & X'_1 \arrow[r, "f'_1"] & \cdots \arrow[r, "f'_{d-1}"] & X'_{d} \arrow[r, "f'_{d}"] & X_{d+1} \arrow[r]  & 0.
	\end{tikzcd}
	\]
	Note that this defines an equivalence relation on the class of $d$-exact sequences whenever the category $\mc$ is $d$-abelian \cite[Proposition 4.10]{Jasso}.
	
	When $\mc\subseteq\ac$ is a $d$-cluster tilting subcategory, $d$-exact sequences coincide precisely with exact sequences of the form \eqref{eq: d ext} where all terms are in $\mc$. Moreover, any exact sequence in $\ac$ of the form \eqref{eq: d ext} with end terms in $\mc$ is equivalent to one where all terms are in $\mc$ \cite[A.1]{Iyama2007a}.
	
	In a $d$-abelian category, we also find higher analogues of the classical notions of pushouts and pullbacks. For more details on the construction of $d$-pushouts and $d$-pullbacks, we refer the reader to \cite[Section 2.3]{Jasso}.
	
	Jasso proved the following theorem, which shows that $d$-abelian categories capture the homological structure of $d$-cluster tilting subcategories.
	
	\begin{theorem}\cite[Theorem 3.16]{Jasso} \label{thm: d-ct are d-abelian}
		Let $\mc$ be a $d$-cluster tilting subcategory of $\ac$. Then $\mc$ is $d$-abelian.
	\end{theorem}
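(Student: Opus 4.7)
The plan is to verify each axiom in the definition of a $d$-abelian category for $\mc$: idempotent completeness, the existence of a $d$-kernel and a $d$-cokernel for every morphism, and the extendability of monomorphisms (resp.\ epimorphisms) to $d$-exact sequences. Idempotent completeness is almost immediate: $\ac$ is idempotent complete because it is abelian, and the defining $\Ext^i$-vanishing conditions for membership in $\mc$ pass to direct summands, so any idempotent in $\End_\mc(X)$ splits inside $\mc$.

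The technical heart of the argument is a construction of $\mc$-coresolutions. First I would show that every object $C \in \ac$ fits into an exact sequence
\[
0 \to C \to Y_1 \to Y_2 \to \cdots \to Y_d \to 0
\]
with all $Y_i \in \mc$. Starting from a monomorphism $C \hookrightarrow Y_1$ (available because $\mc$ is cogenerating) and iterating on successive cokernels, one obtains such a sequence in which the final term $Y_d$ automatically lies in $\mc$; this last point is a dimension-shift using the $\Ext^i_\ac(\mc,\mc)=0$ range $1\le i\le d-1$ together with the $d$-cluster tilting characterisation of $\mc$. Dually, every $C$ admits a $\mc$-resolution of length $d$.

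To construct a $d$-cokernel of $f_0\colon X_0\to X_1$ in $\mc$, I would factor $f_0$ through its image, splice the short exact sequence $0\to \Image f_0 \to X_1\to C\to 0$ in $\ac$ with the $\mc$-coresolution of $C$, and read off morphisms $f_1,\ldots,f_d$ between objects of $\mc$. Breaking the spliced exact sequence into short exact pieces and applying $\Hom_\mc(-,M)$ to each, the Ext-vanishing between objects of $\mc$ guarantees that exactness is preserved throughout, so the data $(f_1,\ldots,f_d)$ is a genuine $d$-cokernel. The $d$-kernel is built dually, using that $\mc$ is generating. When $f_0$ is additionally a monomorphism we have $\Image f_0 = X_0$, so the spliced sequence starts with $0\to X_0\to X_1\to \cdots$; the left half is then automatically a $d$-kernel of $f_d$ and we obtain the required $d$-exact sequence. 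The epimorphism case is dual.

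The main obstacle I anticipate is the verification that $\Hom_\mc(-,M)$ stays exact along the spliced sequence. This reduces to showing that $\Ext^i_\ac(Y,M)=0$ for $1\le i\le d-1$ and for the various intermediate cokernels $Y$ produced by the construction; these objects lie in $\ac$ but not a priori in $\mc$, so one has to dimension-shift through the $\mc$-coresolution and control exactly where the boundary of the Ext-vanishing range is reached. It is here that the length-$d$ nature of the coresolution is crucial: the interior Ext terms land safely inside the controlled range $1\le i\le d-1$, while the endpoints of the dimension-shift either vanish because one of the arguments is in $\mc$ or correspond to the genuine cokernel map $f_d$.
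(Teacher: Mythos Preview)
The paper does not give its own proof of this theorem; it is quoted from \cite[Theorem~3.16]{Jasso} and used as background. Your outline is essentially the argument from Jasso's original paper, and the overall shape is correct: build $\mc$-coresolutions of cokernels to produce $d$-cokernels, dualise for $d$-kernels, and observe that when $f_0$ is monic the spliced sequence is already $d$-exact.

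There is one point where your sketch is too optimistic. You write that, starting from an arbitrary monomorphism $C\hookrightarrow Y_1$ with $Y_1\in\mc$ and iterating on cokernels, ``the final term $Y_d$ automatically lies in $\mc$'' by dimension-shifting. This is not automatic for arbitrary embeddings: the dimension-shift only trades $\Ext^j(M,C_{d-1})$ for $\Ext^{d-1}(M,C_j)$ (or the contravariant analogue), and there is no reason these vanish without further input. What makes the construction terminate after $d$ steps in Jasso's proof is that each embedding $C_{i-1}\hookrightarrow Y_i$ is chosen to be a \emph{left $\mc$-approximation}; this is exactly the construction the present paper records as \cref{Example:MinimaldCokernel}, citing \cite[Proposition~3.17]{Jasso}. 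The approximation property is what forces the successive cokernels to acquire increasing $\Ext$-orthogonality to $\mc$, so that $C_{d-1}\in\mc$. Once the length-$d$ coresolution exists with all $Y_i\in\mc$, your Ext-vanishing argument for exactness of $\Hom_\mc(-,M)$ on the spliced sequence goes through as you describe.
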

	
	It has recently been shown that the converse of \cref{thm: d-ct are d-abelian} also holds, i.e.\ that any \mbox{$d$-abelian} category is equivalent to a $d$-cluster tilting subcategory of an abelian category \cite{Ebrahimi,Kvamme}. Therefore, all $d$-abelian categories may be treated as $d$-cluster tilting subcategories.
	
	We end this subsection by introducing a running example that will help us illustrate many of the results in this paper.
	
	\begin{example}\label{ex:running}
		Consider the quiver \(1 \xrightarrow{\alpha} 2 \xrightarrow{\beta} 3\). Let $A$ denote the path algebra of this quiver modulo the ideal generated by the relation $\alpha\beta$. \cref{fig:Ar-quiverA} shows the Auslander--Reiten quiver of $\modA$, where the dashed arrows indicate the Auslander--Reiten translation.
		The subcategory
		\[
		\mc = \add\left\{ \rep{3} \oplus \rep{2\\3} \oplus \rep{1\\2} \oplus \rep{1}	 \right\} 
		\]
		is $2$-cluster tilting in $\modA$, and $\mc$ is hence an example of a $2$-abelian category.
		The indecomposable objects of $\modA$ that generate $\mc$ are marked in \cref{fig:Ar-quiverA}.
		
		\begin{figure}[ht]
			\centering
			\begin{tikzpicture}[line cap=round,line join=round ,x=2.0cm,y=1.8cm, scale = 1]
				\clip(-2.3,0.8) rectangle (2.1,2.5);
				\draw [->] (-1.8,1.2) -- (-1.2,1.8);
				\draw [->] (0.2,1.2) -- (0.8,1.8);
				\draw [<-, dashed] (-1.8,1.0) -- (-0.2,1.0);
				\draw [<-, dashed] (0.2,1.0) -- (1.8,1.0);
				\draw [->] (-0.8,1.8) -- (-0.2,1.2);
				\draw [->] (1.2,1.8) -- (1.8,1.2);
				
				\begin{scriptsize}
					\draw (-2,1) node[draw] {$\Rep{3}$};
					\draw (0,1) node {$\Rep{2}$};
					\draw (2,1) node[draw]  {$\Rep{1}$};
					\draw (-1,2) node[draw]  {$\Rep{2\\3}$};
					\draw (1,2) node[draw]  {$\Rep{1\\2}$};
				\end{scriptsize}
			\end{tikzpicture}
			\caption{The Auslander--Reiten quiver of the module category considered in \cref{ex:running}, with the generators of the $2$-cluster tilting subcategory $\mc$ marked.}
			\label{fig:Ar-quiverA}
		\end{figure}
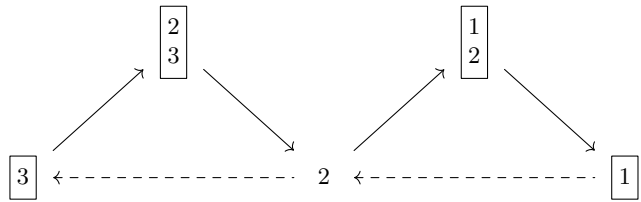
		
	\end{example}
	
	\subsection{Minimality} \label{section: minimal}
	The $d$-kernels and $d$-cokernels in a $d$-abelian category are unique only up to homotopy. Many of our proofs require a stronger sense of uniqueness, which is why we recall the concept of minimality in this section.
	
	The Jacobson radical of the abelian category $\ac$ consists of the morphisms
	\begin{align*}
		\scalebox{0.95}{$\Rad_\ac(X,Y)=\{ f \in \Hom_\ac(X,Y) \mid 1_X - g \circ f \ \text{is invertible for all $g \in \Hom_\ac(Y,X)$}\}.$}
	\end{align*}
	For more details and basic properties, see \cite[A.3]{bluebook1}.
	
	\begin{definition}\cite[Definition 2.5]{HerschendJorgensen}\label{def: minimal}
		Let $\mc$ be a $d$-cluster tilting subcategory of $\ac$.
		\begin{enumerate}
			\item A $d$-cokernel
			\[
			X_1 \xrightarrow{f_1} X_2 \xrightarrow{f_2} \dots \xrightarrow{f_{d-1}} X_{d} \xrightarrow{f_d} X_{d+1} \to 0
			\]
			of a morphism $f_0 \colon X_0 \to X_1$ in $\mc$ is called \textit{minimal} if $f_i \in \Rad_{\ac}(X_i,X_{i+1})$ for \mbox{$i=2,\ldots,  d$}.
			\item A $d$-kernel
			\[
			0 \to X_0 \xrightarrow{f_0} X_1 \xrightarrow{f_1} \dots \xrightarrow{f_{d-2}} X_{d-1} \xrightarrow{f_{d-1}} X_{d}
			\]
			of a morphism $f_d \colon X_d \to X_{d+1}$ in $\mc$ is called \textit{minimal} if $f_i \in \Rad_{\ac}(X_i,X_{i+1})$ for $i=0,\ldots,  d-2$.
			\item A $d$-extension
			\[
			0 \to X_0 \xrightarrow{f_0} X_1 \xrightarrow{f_1} \dots \xrightarrow{f_{d-1}} X_{d} \xrightarrow{f_d} X_{d+1}\to 0
			\]
			in $\mc$ is called \textit{minimal} if $f_i \in \Rad_{\ac}(X_i,X_{i+1})$ for $i=1,\ldots,  d-1$.
		\end{enumerate}
	\end{definition}
	
	\cref{prop: minimal} justifies the terminology in the definition above, and shows that minimal \mbox{$d$-cokernels}, $d$-kernels and $d$-extensions exist and are unique up to isomorphism. When we say that a complex in a category $\mc$ is a \textit{direct summand} of another complex, this means that it is a direct summand in the category of complexes over $\mc$.
	
	\begin{proposition}\cite[Proposition 2.4]{HerschendJorgensen} \label{prop: minimal}
		Let $\mc$ be a $d$-cluster tilting subcategory of $\ac$.
		\begin{enumerate}
			\item Given a morphism $f$ in $\mc$, there exists a minimal $d$-cokernel (resp.\ $d$-kernel) of $f$. This $d$-cokernel (resp.\ $d$-kernel) is a direct summand of any other $d$-cokernel (resp.\ $d$-kernel) of $f$. 
			\item Given a $d$-extension in $\mc$, there exists an equivalent minimal $d$-extension. This minimal $d$-extension is a direct summand of every $d$-extension in the associated equivalence class.
		\end{enumerate}
	\end{proposition}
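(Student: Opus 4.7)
The plan is to rely on the Krull--Schmidt property of $\mc$ to strip away \emph{trivial summands}, i.e.\ subcomplexes of the form $\cdots \to 0 \to Y \xrightarrow{\mathrm{id}_Y} Y \to 0 \to \cdots$, until no further reduction is possible; what remains is minimal, and the splittings produced along the way witness the resulting minimal complex as a direct summand of the original via comparison chain maps.

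For existence in part (1), I would begin with any $d$-cokernel $(f_1, \ldots, f_d)$ of $f_0 \colon X_0 \to X_1$ (which exists by \cref{thm: d-ct are d-abelian}). If some $f_i$ with $i \in \{2, \ldots, d\}$ fails to lie in $\Rad_\ac(X_i, X_{i+1})$, then Krull--Schmidt produces a nonzero object $Y$ together with decompositions $X_i = X_i' \oplus Y$ and $X_{i+1} = X_{i+1}' \oplus Y$ such that the $Y$-to-$Y$ component of $f_i$ is invertible; after triangular conjugation by automorphisms of $X_i$ and $X_{i+1}$, this component becomes $\mathrm{id}_Y$ and the other three blocks vanish. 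Adjusting $f_{i-1}$ and $f_{i+1}$ correspondingly, and using the weak cokernel property at each stage to retain the defining exactness condition on $\Hom_\mc(-,M)$, one obtains a decomposition in which the trivial complex $\cdots \to 0 \to Y \xrightarrow{\mathrm{id}} Y \to 0 \to \cdots$ is split off as a direct summand. The total number of indecomposable summands strictly decreases, so the procedure terminates in a minimal $d$-cokernel; the $d$-kernel case is dual.

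For the direct-summand assertion, let $C = (f_i)$ be an arbitrary $d$-cokernel of $f_0$ and $C^\circ = (f_i^\circ)$ the minimal one constructed above. Iterated application of the weak cokernel property produces chain maps $\varphi \colon C^\circ \to C$ and $\psi \colon C \to C^\circ$ that are the identity on $X_1$. Then $\psi \varphi$ is a chain endomorphism of $C^\circ$ whose first component is $\mathrm{id}_{X_1}$; the hypothesis $f_i^\circ \in \Rad_\ac$ for $i \geq 2$, combined with a Fitting-style argument applied degree by degree, forces each $(\psi\varphi)_i$ to be an automorphism, so $\varphi$ is a split monomorphism of complexes. Part (2) follows the same blueprint applied to a $d$-extension $(f_0, \ldots, f_d)$: the equivalence relation fixes the outer terms $X_0$ and $X_{d+1}$, the radical condition concerns the interior positions $i = 1, \ldots, d-1$, and the commutative diagram defining the equivalence supplies the chain maps feeding into the Fitting argument.

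The principal obstacle is the local reduction step in Step 1: after triangular basis change brings $f_i$ into block-diagonal form with an $\mathrm{id}_Y$ block, one must perform compensating modifications on $f_{i-1}$ and $f_{i+1}$ so that the identity block propagates to a direct summand of the \emph{entire} complex while retaining the weak (co)kernel conditions at all adjacent positions. Once this trivial-summand splitting lemma is in hand, both the termination of the reduction procedure and the comparison-map argument reduce to routine Krull--Schmidt bookkeeping.
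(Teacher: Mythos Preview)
The paper does not prove this proposition; it is quoted verbatim from \cite[Proposition~2.4]{HerschendJorgensen}. Your outline is the standard Krull--Schmidt argument and is essentially correct. The ``principal obstacle'' you flag is in fact harmless: once row/column operations bring $f_i$ into block form $\begin{psmallmatrix} f_i' & 0 \\ 0 & 1_Y \end{psmallmatrix}$, the relations $f_i f_{i-1}=0$ and $f_{i+1}f_i=0$ force the $Y$-components of $f_{i-1}$ and $f_{i+1}$ to vanish automatically, so the trivial complex splits off without further adjustment, and applying $\Hom_\mc(-,M)$ shows the remaining complex is still a $d$-cokernel.

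The genuinely delicate step is your ``Fitting-style argument''. From $(\psi\varphi)_{i+1}f_i^\circ = f_i^\circ(\psi\varphi)_i$ you cannot directly invoke left minimality of $f_i^\circ$ once $i\geq 2$, because the right-hand side is $f_i^\circ(\psi\varphi)_i$ rather than $f_i^\circ$. The clean fix is to build $\psi$ adaptively: having arranged $\psi_j\varphi_j=\mathrm{id}$ for $j\leq i$, lift to some $\psi_{i+1}$ via the weak cokernel property, observe that $(\psi_{i+1}\varphi_{i+1})f_i^\circ=f_i^\circ$ so $\psi_{i+1}\varphi_{i+1}$ is an automorphism $\gamma$, and replace $\psi_{i+1}$ by $\gamma^{-1}\psi_{i+1}$. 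This uses only that each $f_i^\circ$ is left minimal for $1\le i\le d$, which follows from \cref{rem: minimality and radical}.

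For comparison, although the paper gives no proof of the proposition, \cref{Example:MinimaldCokernel} supplies an alternative existence argument for part~(1): rather than stripping trivial summands from an arbitrary $d$-cokernel, one builds the minimal $d$-cokernel directly by taking successive cokernels in $\ac$ and then minimal left $\mc$-approximations. This produces each $f_i^\circ$ as a left minimal morphism by construction and identifies the terms explicitly, which is what the paper exploits later. Your reduction argument, by contrast, makes \cref{rem: minimal} transparent: any $d$-cokernel is the minimal one plus a contractible complex.
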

	
	\begin{remark} \label{rem: minimal}
		Since \cite[Lemma 2.1]{Jasso} implies that any two $d$-cokernels of a morphism are isomorphic in the homotopy category, \cref{prop: minimal} implies that any $d$-cokernel is isomorphic to the direct sum of the minimal $d$-cokernel and a contractible exact sequence. Moreover, for an idempotent complete category (e.g. an abelian category), any contractible complex is the direct sum of complexes of the form $N \xrightarrow{1_N} N$. In particular, given a $d$-cokernel 
		\[
		X_1 \xrightarrow{f_1} \cdots \xrightarrow{f_{d-1}} X_{d} \xrightarrow{f_d} X_{d+1} \to 0
		\]
		of $f_0\colon X_0 \rightarrow X_1$ where $f_i \notin \Rad_\ac(X_i, X_{i+1})$ for some $2 \leq i\leq d$, there is an isomorphic $d$-cokernel where $f_i$ is replaced by
		\begin{align*}
			\begin{pmatrix}
				f_i' & 0 \\ 0 & 1_N
			\end{pmatrix}\colon X_{i}' \oplus N \rightarrow X_{i+1}' \oplus N 
		\end{align*}
		with $f_i' \in \Rad_\ac(X_i', X_{i+1}')$. Similar statements hold for $d$-kernels and $d$-extensions.
	\end{remark}
	
	The terminology in \cref{def: minimal} is further justified by the following connection to minimal morphisms.
	
	\begin{lemma}\label{rem: minimality and radical}
		Let $X \xrightarrow{f} Y \xrightarrow{g} Z$ be a complex in $\ac$ for which the induced sequence 
		\[
		\Hom_\ac(Z,Y) \to \Hom_\ac(Y,Y) \to \Hom_\ac(X,Y)
		\]
		is exact. The morphism $f$ is left minimal if and only if $g\in\Rad_{\ac}(Y,Z)$. 
	\end{lemma}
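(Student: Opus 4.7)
The plan is to unfold the three definitions (left minimality, the radical, and exactness of the Hom sequence) and observe that they fit together almost immediately, with one short calculation in each direction.

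For the forward direction, suppose $f$ is left minimal. To show $g\in\Rad_\ac(Y,Z)$, I take an arbitrary $g'\colon Z\to Y$ and consider the endomorphism $h\colon=1_Y-g'\circ g$ of $Y$. Using that $X\xrightarrow{f}Y\xrightarrow{g}Z$ is a complex, i.e.\ $g\circ f=0$, a direct computation gives $h\circ f=f-g'\circ g\circ f=f$. Left minimality of $f$ then forces $h$ to be an isomorphism, which is exactly the condition for $g$ to lie in $\Rad_\ac(Y,Z)$.

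For the converse, suppose $g\in\Rad_\ac(Y,Z)$ and let $h\in\End_\ac(Y)$ satisfy $h\circ f=f$. Then $(1_Y-h)\circ f=0$, so by exactness of the induced sequence
\[
\Hom_\ac(Z,Y) \to \Hom_\ac(Y,Y) \to \Hom_\ac(X,Y),
\]
the morphism $1_Y-h$ factors as $g'\circ g$ for some $g'\colon Z\to Y$. Hence $h=1_Y-g'\circ g$, which is invertible by definition of the radical. Thus $h$ is an isomorphism, and $f$ is left minimal.

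Neither direction presents a genuine obstacle; the only subtlety is that left minimality is only applied to the specific endomorphism $1_Y - g' g$, and the exactness hypothesis is only invoked to produce a factorisation of the specific morphism $1_Y-h$. Both uses are one-line applications, and the symmetry between the two directions (both hinging on the identity $h = 1_Y - g' g$) makes the argument essentially a single observation read in two ways.
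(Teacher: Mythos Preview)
Your proof is correct. The paper does not give its own argument here but simply cites \cite[Lemma~1.1]{higher survey}; your direct verification from the definitions is the standard proof and is almost certainly what appears in that reference.
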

	
	\begin{proof}
		See \cite[Lemma 1.1]{higher survey}
	\end{proof}
	
	The construction of a minimal $d$-cokernel is frequently used throughout the paper. We  discuss it in more detail in the following.
	
	\begin{construction}\label{Example:MinimaldCokernel}
		The minimal $d$-cokernel of a morphism $f \colon X \rightarrow Y$ in a $d$-cluster tilting subcategory $\mc \subseteq \ac$ can be constructed as follows:
		\begin{enumerate}
			\item Set $C_1=\Coker f$ and let $g_1 \colon C_1 \to M_1$ be the minimal left $\mc$-approximation of $C_1$. Set $f_1 \colon Y \to M_1$ to be the composition $Y \rightarrow C_1 \xrightarrow{g_1} M_1$.
			\item Repeat on $f_1 \colon Y \to M_1$ to construct $f_2 \colon M_1 \to M_2$.
			\item Iterate the procedure, which must terminate and result in a $d$-cokernel by \cite[Proposition 3.17]{Jasso}.
		\end{enumerate}
		Since each morphism $f_i$ is the composition of an epimorphism and a left minimal morphism, they are all left minimal, and hence \cref{rem: minimality and radical} shows that this is the minimal $d$-cokernel of $f$. Note that each $f_i$ can equivalently be described as the left minimal weak cokernel of the previous morphism.
	\end{construction}
	
	\begin{lemma} \label{lem: minimal pushouts}
		Suppose we have a $d$-extension
		\begin{align*}
			0 \to X \rightarrow E_1 \rightarrow E_2 \rightarrow \dots \rightarrow E_d \rightarrow Y \to 0.
		\end{align*}
		Then, for any morphism $h \colon X \to F_0$, there is a $d$-pushout diagram
		\begin{center}
			\begin{tikzcd}[column sep=22, row sep=22]
				0 \arrow[r] & X \arrow[r] \arrow[d,"h"] & E_1 \arrow[r] \arrow[d] & \cdots \arrow[r] & E_d \arrow[r] \arrow[d] & Y\arrow[r] \arrow[d, equal] & 0 \\
				0 \arrow[r] & F_0 \arrow[r] & F_1 \arrow[r] & \cdots  \arrow[r] & F_d \arrow[r] & Y \arrow[r] & 0
			\end{tikzcd}
		\end{center}
		such that the bottom row is a minimal $d$-extension.
	\end{lemma}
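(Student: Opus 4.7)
My plan is to combine the existence of $d$-pushouts with the existence of minimal representatives of $d$-extensions guaranteed by \cref{prop: minimal}. I would first invoke the existence of $d$-pushouts in the $d$-abelian category $\mc$ (as recalled in \cref{subsec:d-abelian categories}) along the morphism $h \colon X \to F_0$ applied to the given $d$-extension. This produces some $d$-pushout diagram of the required shape whose bottom row is a, not necessarily minimal, $d$-extension $\eta'$ starting at $F_0$ and ending at $Y$.

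Next, I would apply \cref{prop: minimal}(2) to $\eta'$, obtaining an equivalent minimal $d$-extension $\eta$ that is a direct summand of $\eta'$. By \cref{rem: minimal} (and its straightforward analogue for $d$-extensions), the complementary summand decomposes as a direct sum of contractible pieces concentrated in two adjacent middle positions, of the form $0 \to \cdots \to 0 \to N \xrightarrow{1_N} N \to 0 \to \cdots \to 0$. In particular, the canonical projection $\eta' \twoheadrightarrow \eta$ is the identity at the outer entries $F_0$ and $Y$.

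I would then modify the initial $d$-pushout diagram by composing each of its vertical morphisms with the projection $\eta' \twoheadrightarrow \eta$. Because this projection is the identity at $F_0$ and at $Y$, the outer vertical morphisms of the new diagram remain $h$ and $1_Y$, respectively, and its bottom row is the minimal $d$-extension $\eta$. The final step is to verify that this modified diagram is again a $d$-pushout.

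The main obstacle will be this last verification. Concretely, it amounts to checking that the defining property of a $d$-pushout diagram in Jasso's sense (essentially the exactness of the associated total/mapping-cone complex in a suitable range) is preserved when the bottom $d$-extension is replaced by a direct summand which agrees with it at the outer terms. I expect this to follow by additivity: the original $d$-pushout decomposes as the direct sum of the desired new diagram with a trivial diagram whose top row is zero, whose bottom row is the contractible complement, and whose vertical morphisms at $F_0$ and $Y$ vanish. A short check on each summand, using that a contractible complex itself trivially satisfies the pushout exactness conditions, then yields that the modified diagram is a $d$-pushout, completing the proof.
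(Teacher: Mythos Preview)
Your approach is essentially the same as the paper's: construct a $d$-pushout, replace the bottom row by its minimal direct summand from \cref{prop: minimal}(2), and compose with the projection. The only difference lies in your final verification. The paper simply invokes \cite[Proposition 4.8]{Jasso}, which says that any morphism of $d$-exact sequences that is the identity on the last term is automatically a $d$-pushout; since the projection $\eta' \twoheadrightarrow \eta$ fixes $F_0$ and $Y$, the composed diagram is immediately a $d$-pushout. Your proposed additivity argument is correct in spirit, but the phrasing ``the original $d$-pushout decomposes as the direct sum of the desired new diagram with a trivial diagram whose top row is zero'' is not literally accurate: the top row $T$ does not split, so there is no direct sum of \emph{diagrams}. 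What does decompose is the mapping cone (up to an isomorphism absorbing the null-homotopic component $T \to C$ into the contractible summand), and checking this carefully is more work than citing the ready-made characterisation from \cite{Jasso}.
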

	
	\begin{proof}
		The fact that a $d$-pushout diagram exists is precisely \cite[Theorem 3.8]{Jasso}.
		It follows from \cite[Proposition 4.8]{Jasso} that the bottom row is a $d$-extension with last term equal to $Y$. If this $d$-extension is minimal, then we are done. Otherwise, it has a minimal $d$-extension as a direct summand by \cref{prop: minimal}.
		Composing the morphism in the statement with the projection onto this minimal $d$-extension gives another commutative diagram, which is a $d$-pushout by \cite[Proposition 4.8]{Jasso}. The bottom row of this new diagram is a minimal $d$-extension as required.
	\end{proof}
	
	Note that there is a dual version of \cref{lem: minimal pushouts} which will also be used.
	
	\subsection{Torsion and \texorpdfstring{$d$}{d}-torsion classes}
	
	Generalising the properties of the class of torsion groups in the category of abelian groups, the notion of a \textit{torsion pair} was introduced in \cite{Dickson} as follows.
	
	\begin{definition}\label{def:torsion}
		A pair $(\tc, \fc)$ of subcategories of $\ac$ is a \textit{torsion pair} if the following conditions are satisfied:
		\begin{enumerate}
			\item For every $X \in \ac$, there exists a short exact sequence
			\[0\to tX \rightarrow X \rightarrow fX \to 0\]
			where $tX\in\tc$ and $fX\in\fc$.
			\item $\Hom_\ac (X,Y)=0$ for all $X \in \tc$ and $Y \in \fc$.
		\end{enumerate}
		Given a torsion pair $(\tc, \fc)$, we say that $\tc$ is a \textit{torsion class} and $\fc$ a \textit{torsion free class}.
	\end{definition}
	
	Note that the short exact sequence from the definition above is unique up to isomorphism. The following classical result characterises torsion classes in $\ac$ as those subcategories which are closed under extensions and quotients.
	
	\begin{theorem}\cite[Theorem 2.3]{Dickson} \label{thm: dickson torsion}
		A subcategory $\tc$ of $\ac$ is a torsion class if and only if $\tc$ is closed under extensions and quotients.
	\end{theorem}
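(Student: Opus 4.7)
The plan is to prove both implications separately, with the forward direction being a routine consequence of the existence and uniqueness of the canonical torsion sequence, and the reverse direction requiring an explicit construction of a torsion pair.

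For the forward direction, I assume $(\tc,\fc)$ is a torsion pair and verify the two closure properties. For quotients, given an epimorphism $T \twoheadrightarrow Q$ with $T \in \tc$, I consider the canonical sequence $0 \to tQ \to Q \to fQ \to 0$. The composition $T \twoheadrightarrow Q \to fQ$ lies in $\Hom_\ac(T,fQ)=0$, so since $T \to Q$ is epi, the map $Q \to fQ$ itself vanishes, forcing $fQ=0$ and $Q=tQ \in \tc$. For extensions, given $0 \to T' \to X \to T'' \to 0$ with $T',T'' \in \tc$, the composition $T' \to X \to fX$ vanishes for the same reason, so $X \to fX$ factors through $T''$, yielding a map $T'' \to fX$ which must also be zero; thus $fX=0$ and $X \in \tc$.

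For the reverse direction, I define $\fc \colonequals \{Y \in \ac \mid \Hom_\ac(T,Y)=0 \text{ for all } T \in \tc\}$, which is automatically orthogonal to $\tc$. The heart of the proof is to construct, for each $X \in \ac$, a short exact sequence $0 \to tX \to X \to fX \to 0$ with $tX \in \tc$ and $fX \in \fc$. I would take $tX$ to be the sum of all subobjects of $X$ that belong to $\tc$. Because $\ac$ is a finite length category, this sum is attained by a finite sum $T_1 + \cdots + T_n$; writing this as the image of $T_1 \oplus \cdots \oplus T_n \to X$, it is a quotient of an object in $\tc$ (noting that $\tc$ is closed under finite direct sums since $T_1 \oplus T_2$ is an extension of $T_2$ by $T_1$), and hence lies in $\tc$ by the quotient-closure assumption. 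In particular, $tX$ is the unique maximal subobject of $X$ in $\tc$.

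It remains to verify $fX = X/tX \in \fc$, and this is where the main obstacle lies. Suppose for contradiction that there exists a nonzero morphism $\varphi \colon T \to fX$ with $T \in \tc$. Its image $I = \Image\varphi$ is a nonzero quotient of $T$, so $I \in \tc$ by quotient-closure. Pulling back the epimorphism $X \twoheadrightarrow fX$ along the inclusion $I \hookrightarrow fX$ produces a subobject $X' \subseteq X$ fitting into a short exact sequence $0 \to tX \to X' \to I \to 0$. Since both end terms lie in $\tc$, extension-closure gives $X' \in \tc$; but $X'$ strictly contains $tX$ because $I \neq 0$, contradicting the maximality of $tX$. This yields $fX \in \fc$ and completes the construction of the torsion pair. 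The crucial use of the length hypothesis is to guarantee that the supremum defining $tX$ is realised by an actual subobject in $\tc$; the remaining subtlety is recognising that the pullback along a subobject of $fX$ can be interpreted as an honest subobject of $X$, which is what allows the maximality argument to close.
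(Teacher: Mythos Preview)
The paper does not actually prove this statement; it is quoted as the classical result \cite[Theorem 2.3]{Dickson} and left without proof. Your argument is a correct and standard proof of the result: the forward direction follows cleanly from the canonical sequence and the Hom-vanishing condition, and in the reverse direction your construction of $tX$ as the (finite, by the length hypothesis) sum of $\tc$-subobjects, followed by the pullback-and-maximality contradiction to show $X/tX \in \fc$, is exactly the classical approach.
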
 
	
	We denote by $\text{tors}(\ac)$ the poset of all torsion classes in $\ac$ ordered by inclusion. It is well known that $\text{tors}(\ac)$ 
	is a complete lattice where the meet is given by intersection, see e.g.\ \cite[Proposition 2.3]{IyamaReitenThomasTodorov}. 
	
	In light of the development of higher homological algebra, it is natural to consider higher analogues of torsion classes. 
	The notion of $d$-torsion classes was introduced in \cite{Jorgensen}. 
	
	\begin{definition}\cite[Definition 1.1]{Jorgensen}\label{def:ntorsionclass}
		Let $\mc$ be a $d$-abelian category.
		A subcategory $\uc$ of $\mc$ is a \textit{$d$-torsion class} if for every $M$ in $\mc$, there exists a $d$-exact sequence
		\begin{equation*}
			0 \rightarrow U_M \rightarrow M \rightarrow V_1 \rightarrow \cdots \rightarrow V_d \rightarrow 0
		\end{equation*}
		such that the following conditions are satisfied:
		\begin{enumerate}
			\item \label{def:ntorsionclass:contained}The object $U_M$ is in $\uc$.
			\item \label{def:ntorsionclass:exact} The sequence $0 \rightarrow \Hom_\mc(U,V_1) \rightarrow \cdots \rightarrow \Hom_\mc(U,V_d) \rightarrow 0$ is exact for every $U$ in $\uc$.
		\end{enumerate}
		The object $U_M$ above is known as the \textit{$d$-torsion subobject} of $M$ with respect to $\uc$. 
	\end{definition}
	Note that when $d=1$, the previous definition is equivalent to the definition of a torsion class, c.f.\ \cref{def:torsion}.
	
	\begin{remark} \label{rem: closed under summands}
		Our standing assumption that subcategories are closed under finite direct sums and summands is not necessary in \Cref{def:ntorsionclass}, as this follows from \cite[Lemma 2.7(iii)]{Jorgensen}.
	\end{remark}
	
	A recent paper \cite{AJST} showed that there is a strong relationship between the torsion classes in an abelian category $\ac$ and the $d$-torsion classes in a $d$-cluster tilting subcategory $\mc$ of $\ac$.
	
	\begin{theorem}\cite[Theorem 1.1]{AJST}\label{thm:torsion-to-dtorsion}
		Let $\mc$ be a $d$-cluster tilting subcategory of $\ac$. Then a torsion class $\tc$ in $\ac$ is the minimal torsion class containing a given $d$-torsion class in $\mc$ if and only if the following conditions are satisfied:
		\begin{enumerate}
			\item For every $M\in\mc$, we have $tM\in\mc$.
			\item $\tc$ is the smallest torsion class containing all $tM$ for $M\in\mc$.
			\item For any $M,M'\in\mc$, we have $\Ext^{d-1}_\ac(tM,fM')=0$. 
		\end{enumerate}
		Moreover, in this case $\uc \colonequals \tc \cap \mc$ is a $d$-torsion class and $\tc$ is the minimal torsion class containing it. Furthermore, we have $U_M \cong tM$ for every object $M \in \mc$.
	\end{theorem}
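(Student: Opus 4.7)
The plan is to establish the biconditional in two steps and verify the ``moreover'' claims along the way. The forward direction ($\Rightarrow$) begins with a $d$-torsion class $\uc$ and its minimal torsion class $\tc$ in $\ac$; I would identify $U_M$ with $tM$ for every $M\in\mc$ and then read off conditions (1)--(3). The reverse direction ($\Leftarrow$) starts from conditions (1)--(3), sets $\uc\colonequals\tc\cap\mc$, and constructs for each $M\in\mc$ a $d$-exact sequence witnessing that $\uc$ is a $d$-torsion class with $U_M=tM$.

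For the $\Rightarrow$ direction, given a $d$-exact sequence $0\to U_M\to M\to V_1\to\cdots\to V_d\to 0$ with $U_M\in\uc$, I would observe that $M/U_M$ sits inside $V_1$ as the kernel of $V_1\to V_2$. The injectivity part of the $\uc$-exactness condition then gives $\Hom_\ac(U,M/U_M)=0$ for every $U\in\uc$, i.e.\ $M/U_M\in\uc^\perp$. The key general fact is that the minimal torsion class containing $\uc$ equals ${}^\perp(\uc^\perp)$, so $\fc=\tc^\perp=\uc^\perp$ contains $M/U_M$. Combined with $U_M\in\uc\subseteq\tc$, this forces $U_M=tM$, yielding (1) and also the identification $\uc=\tc\cap\mc$. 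Condition (2) then follows because every $U\in\uc$ satisfies $U=tU$, so $\{tM:M\in\mc\}$ generates the same torsion class as $\uc$. For (3), I would use that $0\to fM'\to V_1'\to\cdots\to V_d'\to 0$ is exact in $\ac$, coming from the $d$-exact sequence for $M'$, and perform a dimension-shifting computation of $\Ext^{d-1}_\ac(tM,fM')$ along this resolution using $\Ext^i_\ac(\mc,\mc)=0$ for $1\le i\le d-1$. The computation reduces to the cokernel of $\Hom_\ac(tM,V_{d-1}')\to\Hom_\ac(tM,V_d')$, which vanishes by the surjectivity part of the $\uc$-exactness applied to $tM\in\uc$.

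For the $\Leftarrow$ direction, condition (1) places $tM$ inside $\uc$, and \cref{thm: d-ct are d-abelian} produces a $d$-cokernel $V_1\to\cdots\to V_d$ of $tM\to M$ in $\mc$, yielding a candidate $d$-exact sequence $0\to tM\to M\to V_1\to\cdots\to V_d\to 0$ which corresponds in $\ac$ to an exact sequence with $fM=\ker(V_1\to V_2)$. To check $\uc$-exactness for $U\in\uc$, the injectivity of $\Hom_\ac(U,V_1)\to\Hom_\ac(U,V_2)$ follows because $\Hom_\ac(U,M)\to\Hom_\ac(U,V_1)$ factors through $\Hom_\ac(U,fM)=0$, using $U\in\tc$ and $fM\in\fc$; the surjectivity of $\Hom_\ac(U,V_{d-1})\to\Hom_\ac(U,V_d)$ follows, via the same dimension-shifting argument as in the forward direction, from $\Ext^{d-1}_\ac(U,fM)=0$, which is exactly condition (3) applied with $M$ replaced by $U$, since $U=tU$. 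Minimality of $\tc$ among torsion classes containing $\uc$ is then immediate from condition (2), and the identification $U_M=tM$ holds by construction.

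I expect the main obstacle, used in both directions, to be the dimension-shifting argument that translates between the $\uc$-exactness of $V_1\to\cdots\to V_d$ and the vanishing of $\Ext^{d-1}_\ac(tM,fM')$. This is precisely where the $d$-cluster tilting hypothesis $\Ext^i_\ac(\mc,\mc)=0$ for $1\le i\le d-1$ does all the work. Keeping the indices straight, and recognising that applying condition (3) with $M$ replaced by $U=tU$ is the right bridge to the $\uc$-exactness condition, is where I expect the bookkeeping to be most delicate.
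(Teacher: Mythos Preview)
This theorem is not proved in the present paper: it is quoted verbatim from \cite[Theorem 1.1]{AJST} and used as a black box throughout. There is therefore no ``paper's own proof'' against which to compare your proposal.

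That said, your sketch is a plausible reconstruction of the argument and the main ideas are correct. In the forward direction, the identification $U_M=tM$ via the observation that $M/U_M\in\uc^\perp=\fc$ is the right move, and the dimension-shifting computation of $\Ext^{d-1}_\ac(tM,fM')$ along the exact resolution $0\to fM'\to V_1'\to\cdots\to V_d'\to 0$ by objects of $\mc$ works exactly as you describe, with the $d$-cluster tilting condition $\Ext^i_\ac(\mc,\mc)=0$ for $1\le i\le d-1$ killing the intermediate terms. In the reverse direction, you correctly isolate the two nontrivial parts of the $\uc$-exactness condition (injectivity at $V_1$ and surjectivity at $V_d$), the middle exactness being automatic from the $d$-kernel property of the $d$-exact sequence; and your use of condition (3) with the substitution $U=tU$ is the right bridge. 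One small point worth making explicit is that in the reverse direction you also need $\uc=\tc\cap\mc$ to be closed under direct sums and summands so that it is a genuine subcategory in the sense used; this is immediate since $\tc$ and $\mc$ both are.
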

	
	We now illustrate \cref{thm:torsion-to-dtorsion} in our running example. 
	
	\begin{example}\label{ex:running2} 
		Let $A$ and $\mc$ be as in \cref{ex:running}.
		In \cref{tab:intersection} we give the complete list of $2$-torsion classes $\uc$ in $\mc$ and the corresponding minimal torsion classes $\tc$ in $\modA$ such that $\uc=\tc \cap \mc$.
		
		
		\begin{table}[ht]
			\centering
			\begin{tabular}{|c|c|}
				\hline
				$2\text{-torsion classes in } \mc $ & $\text{Corresponding minimal torsion classes in } \modA        $  \\
				\hline
				\hline
				$\mc$ & $\modA$\\
				\hline
				$\add\left\{\rep{2\\3}\oplus \rep{1\\2} \oplus \rep{1}\right\}$ & $\add\left\{\rep{2\\3}\oplus \rep{2}\oplus \rep{1\\2} \oplus \rep{1}\right\}$
				\\
				\hline
				$\add\left\{\rep{1\\2} \oplus \rep{1}\right\}$ & $\add\left\{ \rep{1\\2} \oplus \rep{1}\right\}$
				\\
				\hline
				$\add\left\{\rep{1}\right\}$ & $\add\left\{\rep{1}\right\}$
				\\
				\hline
				$\add\left\{\rep{3}\right\}$ & $\add\left\{\rep{3}\right\}$\\
				\hline
				$\left\{0\right\}$ & $\left\{0\right\}$ \\
				\hline
			\end{tabular}
			\caption{The $2$-torsion classes considered in \cref{ex:running2} and their corresponding torsion classes.}
			\label{tab:intersection}
		\end{table}
		
		
		On one hand, we note that although \cref{thm:torsion-to-dtorsion} gives a complete characterisation of the {\it minimal} torsion classes $\tc$ in $\modA$ such that $\tc\cap\mc$ is a $2$-torsion class in $\mc$, there are other torsion classes in $\modA$ with this property. For instance, we see that $\tc=\add\left\{\rep{2}\oplus\rep{1\\2}\oplus\rep{1} \right\}$ is a torsion class in $\modA$ and $\tc\cap\mc = \add\left\{\rep{1\\2}\oplus\rep{1} \right\}$ is a $2$-torsion class in $\mc$. 
		
		On the other hand, it is not true that $\tc\cap\mc$ is a \mbox{$2$-torsion} class in $\mc$ for every torsion class $\tc$ in $\modA$. Two such examples are $\tc_1=\add\left\{\rep{3}\oplus\rep{1} \right\}$ and $\tc_2 = \add\left\{\rep{3}\oplus\rep{2\\3}\oplus\rep{2} \right\}$. The intersections $\tc_1\cap\mc$ and $\tc_2\cap \mc$ are not $2$-torsion classes in $\mc$, for reasons that will be explained in \cref{ex:running3}.
	\end{example} 
	
	
	\section{Closure under \texorpdfstring{$d$}{d}-extensions and \texorpdfstring{$d$}{d}-quotients}
	\label{sec: general results}
	Throughout this section, let $\mc$ be a $d$-cluster tilting subcategory of
	the abelian category $\ac$. We also introduce the following setup, which will not be assumed unless explicitly stated.
	
	\begin{setup} \label{torsion setup}
		Let $\uc$ be a $d$-torsion class in $\mc$. Let $\tc$ be a torsion class in $\ac$ such that $\uc = \tc \cap \mc$ and the torsion subobject $tM$ of $M$ belongs to $\mc$ for all $M \in \mc$.
	\end{setup}
	
	We note that \cref{thm:torsion-to-dtorsion} implies that for every $d$-torsion class $\uc$ in $\mc$, we can find a torsion class $\tc$ satisfying \cref{torsion setup}.
	
	\cref{sec: general results} is divided into three subsections. We first verify  some consequences of the setup above in \cref{Subsection:Approx}. In \cref{subsec:characterising} we state and prove the main result of this paper, namely the characterisation of higher torsion classes given in \cref{intro: characterisation}. The aim of \cref{subsec: closure by indecs} is to prove \cref{intro: indec}. 
	
	\subsection{Approximations by $d$-torsion classes}\label{Subsection:Approx}
	Throughout this subsection we assume \cref{torsion setup}.
	
	\begin{lemma}\label{lem:sametorsionobject}
		For any $M \in \mc$, there is an isomorphism $U_M \cong tM$ commuting with the inclusion to $M$.
	\end{lemma}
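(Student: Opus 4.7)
The plan is to wedge $U_M$ and $tM$ between the torsion subobject of $M$ corresponding to the \emph{minimal} torsion class in $\ac$ containing $\uc$, and then invoke \cref{thm:torsion-to-dtorsion} for the first identification.

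Concretely, I would let $\tc_0$ denote the smallest torsion class in $\ac$ containing $\uc$; this exists because torsion classes in $\ac$ form a complete lattice. Since $\uc \subseteq \tc$ we have $\tc_0 \subseteq \tc$, and hence
\[
\uc \;\subseteq\; \tc_0 \cap \mc \;\subseteq\; \tc \cap \mc \;=\; \uc,
\]
which forces $\tc_0 \cap \mc = \uc$. Applying \cref{thm:torsion-to-dtorsion} to $\tc_0$ (now viewed as the minimal torsion class containing the $d$-torsion class $\uc = \tc_0 \cap \mc$) gives $U_M = t_{\tc_0}M$ as subobjects of $M$ for every $M \in \mc$.

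It then remains to verify $t_{\tc_0}M = tM$ as subobjects of $M$. The inclusion $t_{\tc_0}M \subseteq tM$ is immediate from $\tc_0 \subseteq \tc$ together with the maximality of $tM$. For the reverse, the assumption of \cref{torsion setup} that $tM \in \mc$, combined with $tM \in \tc$, yields $tM \in \tc \cap \mc = \uc \subseteq \tc_0$; the maximality of $t_{\tc_0}M$ then gives $tM \subseteq t_{\tc_0}M$. Concatenating these identifications produces the chain $U_M = t_{\tc_0}M = tM$ as subobjects of $M$, which in particular supplies an isomorphism commuting with the inclusions into $M$. I do not foresee any serious obstacle: the argument is short lattice-theoretic bookkeeping once \cref{thm:torsion-to-dtorsion} is available, with the only mild subtlety being the verification $\tc_0 \cap \mc = \uc$, which follows at once from $\tc_0 \subseteq \tc$.
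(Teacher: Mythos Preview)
Your argument is correct. The paper takes a more direct route: it observes that both $U_M \to M$ and $tM \to M$ are monomorphic right $\uc$-approximations of $M$ (the first from the definition of a $d$-torsion class, the second because $tM \in \tc \cap \mc = \uc$ and the inclusion is already a right $\tc$-approximation), and two monomorphic right approximations by the same subcategory must factor through one another and hence coincide as subobjects. Your approach instead passes through the auxiliary minimal torsion class $\tc_0$, invokes \cref{thm:torsion-to-dtorsion} to get $U_M = t_{\tc_0}M$, and then shows $t_{\tc_0}M = tM$ via two maximality arguments. Both are short; the paper's argument is more self-contained (it uses only the definitions and elementary approximation theory), whereas yours leans on \cref{thm:torsion-to-dtorsion} as a black box but has the minor advantage of making the role of the minimal torsion class explicit.
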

	
	\begin{proof}
		By the definition of a $d$-torsion class, the morphism $U_M\to M$ is a right $\uc$-approximation. Since the morphism $tM\to M$ is a right $\tc$-approximation and $tM\in \uc$, it must also be a right $\uc$-approximation. Hence, the inclusions $U_M\to M$ and $tM\to M$ must factor through each other, which implies that $U_M\cong tM$.
	\end{proof}
	
	By \cref{lem:sametorsionobject}, we can assume $U_M=tM$ whenever we are in \cref{torsion setup}, and we will do this from now on. Given \cref{torsion setup}, we gain additional control of left $\mc$-approximations of objects in $\tc$.
	
	\begin{lemma} \label{lemma: minimal M-appr. in U}
		Let $\phi_X\colon X \rightarrow M$ be the minimal left $\mc$-approximation of an object $X$ in $\tc$. Then $M$ is in $\uc$.
	\end{lemma}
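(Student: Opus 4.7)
The plan is to show that $M$ lies in $\tc$, since $M\in\mc$ already follows from $\phi_X$ being an $\mc$-approximation; combined, this yields $M\in\tc\cap\mc=\uc$.

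First I would form the canonical torsion short exact sequence
\[
0\to tM\xrightarrow{\iota} M\xrightarrow{\pi} fM\to 0
\]
in $\ac$, and note that by \cref{torsion setup} we have $tM\in\mc$. Since $X\in\tc$ and $fM\in\fc$, the defining property of torsion pairs gives $\Hom_\ac(X,fM)=0$, so $\pi\circ\phi_X=0$. Hence $\phi_X$ factors as $\phi_X=\iota\circ\psi$ for some $\psi\colon X\to tM$.

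Next I would use the approximation property in the other direction. Because $\phi_X$ is a left $\mc$-approximation and $tM\in\mc$, the morphism $\psi\colon X\to tM$ factors through $\phi_X$, yielding $\rho\colon M\to tM$ with $\psi=\rho\circ\phi_X$. Substituting, we obtain
\[
\phi_X=\iota\circ\psi=\iota\circ\rho\circ\phi_X=(\iota\circ\rho)\circ\phi_X.
\]
Now invoking the left minimality of $\phi_X$, the endomorphism $\iota\circ\rho$ of $M$ must be an isomorphism.

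Finally, I would conclude from this that $\iota$ is an isomorphism: being the inclusion of a subobject, $\iota$ is already a monomorphism, and the existence of a right inverse (namely $\rho\circ(\iota\circ\rho)^{-1}$) makes it a split epimorphism, hence an isomorphism. Therefore $M\cong tM\in\tc$, and combined with $M\in\mc$ this gives $M\in\uc$, as desired. I don't foresee a genuine obstacle here; the only subtlety is making sure one uses the minimality of $\phi_X$ at the right moment, together with the crucial hypothesis from \cref{torsion setup} that $tM$ lies in $\mc$ so that the approximation property applies.
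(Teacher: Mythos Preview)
Your proof is correct and follows essentially the same approach as the paper: factor $\phi_X$ through the inclusion $\iota\colon tM\to M$ using $tM\in\mc$ and $X\in\tc$, then use that $\phi_X$ is a minimal left $\mc$-approximation to conclude that $\iota$ is an isomorphism. Your write-up is in fact slightly more explicit than the paper's at the final step, spelling out why $\iota\circ\rho$ being an isomorphism forces $\iota$ itself to be one.
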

	
	\begin{proof}
		Consider the short exact sequence
		\[
		0 \rightarrow tM \xrightarrow{\iota} M \rightarrow f M \rightarrow 0
		\]
		associated to the torsion pair $(\tc,\fc)$. The morphism $\iota$ is a right $\tc$-approximation of $M$, and $tM \in \mc$ by assumption. As $X \in \tc$, there exists a morphism $\psi_X \colon X \to tM$ such that $\iota \psi_X = \phi_X$. Notice that since $\phi_X$ is a left $\mc$-approximation of $X$, so is $\psi_X$. Since $\phi_X$ is left minimal and $\iota$ is a monomorphism, this implies that $M$ is isomorphic to $tM$. Consequently, one obtains that $M$ is contained in $\tc$, and hence also in $\uc$.
	\end{proof}
	
	\begin{corollary}\label{cor: minimal M-appr. in U}
		If $X \in\tc$, then the minimal left \mbox{$\mc$-approximation} of $X$ is also the minimal left $\uc$-approximation of $X$. Moreover, this approximation is a monomorphism.
	\end{corollary}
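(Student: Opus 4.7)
The plan is to assemble the statement directly from \cref{lemma: minimal M-appr. in U} together with two fairly standard observations about approximations. Let $\phi_X \colon X \to M$ denote the minimal left $\mc$-approximation of $X$. \cref{lemma: minimal M-appr. in U} immediately gives $M \in \uc$, so $\phi_X$ is at least a morphism from $X$ into an object of $\uc$.

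First I would verify that $\phi_X$ is a left $\uc$-approximation. For any morphism $h \colon X \to U$ with $U \in \uc$, we view $h$ as a morphism into $\mc$ (since $\uc \subseteq \mc$) and use that $\phi_X$ is a left $\mc$-approximation to factor $h$ through $\phi_X$. Next, for minimality, I would observe that since $\uc$ is a full subcategory of $\mc$, the endomorphism monoid of $M$ is the same whether computed in $\uc$ or in $\mc$. Thus the defining condition of minimality—that every endomorphism $g$ of $M$ satisfying $g \circ \phi_X = \phi_X$ is an isomorphism—is inherited from minimality as a left $\mc$-approximation. Hence $\phi_X$ is the minimal left $\uc$-approximation of $X$.

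For the final claim, the key fact is that $\mc$ is cogenerating in $\ac$. This provides a monomorphism $\iota \colon X \hookrightarrow M'$ with $M' \in \mc$, which by the left approximation property factors as $\iota = g \circ \phi_X$ for some $g \colon M \to M'$. Since $\iota$ is a monomorphism, $\phi_X$ must be one as well.

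The argument is essentially a direct packaging of \cref{lemma: minimal M-appr. in U} with standard facts, so I do not anticipate a genuine obstacle; the only subtle point is making sure minimality is interpreted correctly when shifting between the ambient categories $\mc$ and $\uc$, which is handled by full-subcategory-ness.
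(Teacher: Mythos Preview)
Your proof is correct and follows essentially the same approach as the paper: invoke \cref{lemma: minimal M-appr. in U} to get $M\in\uc$, observe that $\phi_X$ is then a left $\uc$-approximation since $\uc\subseteq\mc$, note that left minimality is intrinsic to the morphism, and use that $\mc$ is cogenerating for the monomorphism claim. The only minor remark is that you slightly over-explain the minimality step---left minimality is defined via endomorphisms of the target in the ambient category $\ac$, so it does not depend on which full subcategory you regard $\phi_X$ as approximating into; the paper simply says ``since $\phi_X$ is left minimal'' without further comment.
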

	
	\begin{proof}
		Let $\phi_X\colon X \rightarrow M$ be the minimal left \mbox{$\mc$-approximation} of $X$. By \cref{lemma: minimal M-appr. in U}, we know that $M \in \uc$, and it follows that $\phi_X$ is also a left $\uc$-approximation. Since $\phi_X$ is left minimal, it is the minimal left $\uc$-approximation of $X$. Finally, since $\mc$ is cogenerating, any left \mbox{$\mc$-approximation} is a monomorphism.
	\end{proof}
	
	\begin{remark}\label{rem: U contr. finite in T}
		Note that the previous corollary implies that the $d$-torsion class $\uc$ is always covariantly finite within the torsion class $\tc$, even if $\uc$ is not covariantly finite in $\mc$ or $\ac$.
	\end{remark}
	
	\subsection{Characterising \texorpdfstring{$d$}{d}-torsion classes}
	\label{subsec:characterising}
	
	In order to formulate our results, we need higher analogues of what it means for a subcategory to be closed under extensions and quotients. Recall the notions of $d$-cokernels and $d$-extensions from  \cref{subsec:d-abelian categories}.
	
	The following definition already appeared in the literature, see e.g. \cite[Definition 2.8(iii)]{HerschendJorgensenVaso} or \cite[Definition 4.1]{HerschendLiuNakaoka2}.
	\begin{definition} \label{def: d-ext closed}
		A subcategory $\uc$ of $\mc$ is called \textit{closed under $d$-extensions} if for any $d$-extension 
		\[
		0 \to X \to E_1 \to \cdots \to E_d \to Y \to 0
		\]
		in $\mc$ with $X$ and $Y$ in $\uc$, there exists an equivalent $d$-extension
		\[
		0 \to X \to E'_1 \to \cdots \to E'_d \to Y \to 0
		\]
		where all the objects are in $\uc$.
	\end{definition}
	
	\begin{definition} \label{def: dq closure}
		A subcategory $\uc$ of $\mc$ is called \textit{closed under $d$-quotients} if for any morphism $f \colon M \rightarrow U$ in $\mc$ with $U$ in $\uc$, there exists a $d$-cokernel 
		\[
		M \xrightarrow{f} U \rightarrow E_1 \rightarrow E_2 \rightarrow \dots \rightarrow E_{d} \to 0
		\]
		of $f$ with $E_i$ in $\uc$ for all $i = 1,\ldots, d$.
		If this condition is only assumed to hold when both $M$ and $U$  belong to $\uc$, we say that $\uc$ is \textit{closed under $d$-cokernels}.    
	\end{definition}
	
	It will further be convenient to define a \textit{(minimal) $d$-quotient of} $U \in \mc$ as a (minimal) \mbox{$d$-cokernel} of some morphism $f \colon M \to U$ in $\mc$.  The following lemma shows that closure under $d$-quotients or $d$-extensions is equivalent to closure under minimal $d$-quotients or $d$-extensions. This uses the standing assumption that our subcategories are closed under direct summands.
	
	\begin{lemma}\label{Lemma:ClosureMinimality}
		Let $\uc$ be a subcategory of $\mc$. The following hold:
		\begin{enumerate}
			\item\label{Lemma:ClosureMinimality:1} $\uc$ is closed under $d$-extensions if and only if for any minimal $d$-extension
			\[
			0 \to X \to E_1 \to \cdots \to E_d \to Y \to 0
			\]
			with $X, Y \in \uc$, we have $E_1, \dots, E_d \in \uc$.
			\item\label{Lemma:ClosureMinimality:2} $\uc$ is closed under $d$-quotients if and only if for any minimal $d$-cokernel
			\[
			M \xrightarrow{} U \rightarrow E_1 \rightarrow E_2 \rightarrow \dots \rightarrow E_{d} \to 0
			\]
			of a morphism $M \to U$ in $\mc$
			with $U \in \uc$, we have $E_1, \dots, E_d \in \uc$.
		\end{enumerate}
	\end{lemma}
	
	\begin{proof}
		By \Cref{prop: minimal}, any equivalence class of $d$-extensions contains a unique (up to isomorphism) minimal $d$-extension, which is furthermore a direct summand of any other $d$-extension in the class. This immediately implies \eqref{Lemma:ClosureMinimality:1}, since $\uc$ is closed under direct summands. Part \eqref{Lemma:ClosureMinimality:2} is proved similarly, using that the minimal $d$-cokernel of a morphism $f$ is a summand of any other $d$-cokernel of $f$.
	\end{proof}
	
	\begin{remark} \label{rem: closed under summands 2}
		For a direct sum $X \oplus Y \in \mc$, we have $X,Y\in\mc$ and the projection $X \oplus Y \to Y$ gives a minimal $d$-cokernel
		\[
		X\to X\oplus Y\to Y \to 0\to \cdots \to 0
		\]
		of the inclusion $X \to X \oplus Y$.
		
		Hence, being closed under minimal $d$-quotients implies being closed under direct summands. 
	\end{remark}
	
	\cref{rem: closed under summands} and \cref{rem: closed under summands 2} show that the standing assumption on subcategories being closed under direct summands is not a significant restriction.
	
	The following lemma shows that when checking if a subcategory is closed under $d$-extensions, it is often sufficient to consider the first middle term. This simplifies the proof of \cref{theorem: extension closed} and is also an important step towards the main result in \cref{subsec: closure by indecs}.
	
	\begin{lemma} \label{lem:second term}
		Let $\uc \subseteq \mc$ be closed under $d$-quotients. Then for any minimal $d$-extension
		\begin{align*}
			0 \to X \xrightarrow{f} E_1 \xrightarrow{e_1} E_2 \xrightarrow{e_2} \cdots \xrightarrow{e_{d-1}} E_d \xrightarrow{g} Y \to 0
		\end{align*}
		in $\mc$ with $X,E_1,Y \in \uc$, it follows that $E_i \in \uc$ for $i=2, \dots, d$.
	\end{lemma}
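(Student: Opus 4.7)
The plan is to compare two $d$-cokernels of $f \colon X \to E_1$: the one read off from the given minimal $d$-extension, and a second one produced by applying the $d$-quotient closure of $\uc$ to the morphism $f$ (available since $E_1 \in \uc$). By \cref{rem: minimal}, any $d$-cokernel of $f$ decomposes as the (essentially unique) minimal $d$-cokernel plus a contractible complex, and any contractible exact complex in $\ac$ is itself a direct sum of two-term pieces $N \xrightarrow{1_N} N$ each concentrated in two consecutive positions. Since the first term of the minimal $d$-cokernel is $E_1$ (see \cref{Example:MinimaldCokernel}), no such two-term piece can sit at positions $(1,2)$ in either decomposition.

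Write the minimal $d$-cokernel of $f$ as
\[
E_1 \to E_2^m \to \cdots \to E_d^m \to Y^m \to 0.
\]
The second $d$-cokernel has all terms in $\uc$, and each of its non-initial terms equals $E_i^m$ plus a contractible summand; closure of $\uc$ under direct summands (\cref{rem: closed under summands 2}) therefore yields $E_i^m \in \uc$ for $i = 2, \ldots, d$ and $Y^m \in \uc$.

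The key step is then to constrain the contractible part of the original $d$-cokernel $E_1 \xrightarrow{e_1} E_2 \to \cdots \to E_d \xrightarrow{g} Y \to 0$. A contractible two-term summand $N \xrightarrow{1_N} N$ placed at positions $(i, i+1)$ contributes an identity block to the map at that place, which cannot lie in the Jacobson radical. By the minimality hypothesis $e_1, \ldots, e_{d-1} \in \Rad_{\ac}$, no such summand can occur for $i = 1, \ldots, d-1$. Combined with the absence of a summand at $(1, 2)$ noted above, the only possible placement is $(d, d+1)$, contributing to $g$, which is not required to lie in the radical. Consequently, there exists $N \in \mc$ with
\[
E_i = E_i^m \ \ (2 \le i \le d-1), \qquad E_d = E_d^m \oplus N, \qquad Y = Y^m \oplus N.
\]

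From here the conclusion is immediate: the first identity gives $E_i \in \uc$ for $2 \le i \le d-1$; the assumption $Y \in \uc$ combined with closure of $\uc$ under direct summands yields $N \in \uc$; and closure of $\uc$ under finite direct sums then delivers $E_d = E_d^m \oplus N \in \uc$. The main obstacle in executing this plan is the bookkeeping of the two decompositions and, in particular, correctly matching the radical conditions from minimality of the $d$-extension with the positions at which contractible summands can appear.
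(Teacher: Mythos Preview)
Your proposal is correct and follows essentially the same approach as the paper: both arguments analyse the $d$-cokernel of $f$ via \cref{rem: minimal}, use the radical conditions from minimality of the $d$-extension to confine any contractible summand to positions $(d,d+1)$, and then combine closure under $d$-quotients with closure under summands to conclude. Your introduction of an explicit ``second $d$-cokernel'' in $\uc$ (to deduce that the minimal $d$-cokernel has terms in $\uc$) just spells out what the paper invokes implicitly via \cref{rem: only look at minimal}, and your separate ``first term is $E_1$'' argument for excluding position $(1,2)$ is redundant since $e_1\in\Rad_{\ac}$ already handles it.
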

	\begin{proof} 
		The minimality of the $d$-extension (see \cref{def: minimal})
		gives that $e_i \in \Rad_\ac(E_i, E_{i+1})$ for all $i=1, \dots, d-1$. Moreover, the sequence
		\begin{align}
			E_1 \xrightarrow{e_1} E_2 \xrightarrow{e_2} \cdots \xrightarrow{e_{d-1}} E_d \xrightarrow{g} Y \to 0 \label{eq: secondterm}
		\end{align}
		is a $d$-cokernel of $f$. If $g \in \Rad_\ac(E_d,Y)$, then this $d$-cokernel is minimal. Since $E_1 \in \uc$, the result then follows from $\uc$ being closed under minimal $d$-quotients by \Cref{Lemma:ClosureMinimality}. 
		
		Suppose hence that $g \notin \Rad_\ac(E_d,Y)$. Recall from \cref{rem: minimal} that the sequence \eqref{eq: secondterm} is isomorphic to the direct sum of the minimal $d$-cokernel of $f$ and shifted complexes of the form $N \xrightarrow{1_N} N$. However, since $e_i \in \Rad_A(E_i,E_{i+1})$ for $i=1, \dots d-1$, it follows that \eqref{eq: secondterm} is isomorphic to 
		\begin{align*}
			E_1 \xrightarrow{e_1} E_2 \xrightarrow{e_2} \cdots \xrightarrow{e_{d-2}} E_{d-1} \xrightarrow{e_{d-1}'=\begin{psmallmatrix} h_1 \\ 0 \end{psmallmatrix}} E'_d \oplus Y'' \xrightarrow{\begin{psmallmatrix} g' & 0 \\ 0 & 1_{Y''} \end{psmallmatrix}} Y' \oplus Y'' \to 0,
		\end{align*}
		where $e_{d-1}'$ is $e_{d-1}$ composed with an isomorphism and 
		\begin{align*}
			E_1 \xrightarrow{e_1} E_2 \xrightarrow{e_2} \cdots \xrightarrow{e_{d-2}} E_{d-1} \xrightarrow{h_1} E'_d \xrightarrow{ g'} Y' \to 0
		\end{align*}
		is a minimal $d$-cokernel of $f$. In particular, the objects $E_2, \dots, E_{d-1},E_d'$ are in $\uc$ as $E_1\in\uc$ and $\uc$ is closed under minimal $d$-quotients by \Cref{Lemma:ClosureMinimality}. Since $Y'\oplus Y'' = Y \in \uc$ and $\uc$ is closed under direct summands, we have that $Y''\in \uc$. Hence 
		$E_d=E_d' \oplus Y''\in\uc$, completing the proof.
	\end{proof}
	
	We are now ready to prove the first part of our characterisation result. 
	
	\begin{proposition} \label{theorem: extension closed}
		Let $\uc \subseteq \mc$ be a $d$-torsion class. Then $\uc$ is closed under $d$-extensions and $d$-quotients.
	\end{proposition}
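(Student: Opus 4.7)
The plan is to split the proof into two parts, one for each closure property.

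\textbf{Closure under $d$-quotients.} Given a morphism $f\colon M\to U$ with $U\in\uc$, I would construct the minimal $d$-cokernel of $f$ step by step, following \cref{Example:MinimaldCokernel}. At the first step, the ordinary cokernel $C_1=\Coker f$ is a quotient of $U\in\uc\subseteq\tc$, so lies in $\tc$ (since $\tc$ is closed under quotients in $\ac$). By \cref{cor: minimal M-appr. in U}, its minimal left $\mc$-approximation $M_1$ belongs to $\uc$. The same reasoning, applied inductively to the subsequent morphisms $f_i\colon M_{i-1}\to M_i$ (whose cokernels are quotients of the $M_{i-1}\in\uc\subseteq\tc$ previously constructed), shows that every object $M_i$ in the minimal $d$-cokernel lies in $\uc$. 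This establishes closure under $d$-quotients.

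\textbf{Closure under $d$-extensions.} By \cref{rem: only look at minimal}, it suffices to verify that every minimal $d$-extension
\[
0\to X\to E_1\to E_2\to\cdots\to E_d\to Y\to 0
\]
with $X,Y\in\uc$ satisfies $E_i\in\uc$ for every $i$. Combining \cref{lem:second term} with the first part, it is enough to show $E_1\in\uc$, and since $E_1\in\mc$, this reduces to proving $E_1\in\tc$.

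To establish $E_1\in\tc$, my approach would be to analyze the interaction between the given minimal $d$-extension and the $d$-exact sequence $0\to tE_1\to E_1\to V_1\to\cdots\to V_d\to 0$ provided by applying the $d$-torsion class structure to $E_1$. Since $X\in\tc$ and $tE_1=\Ker(E_1\to V_1)$ in $\ac$, the composite $X\to E_1\to V_1$ is zero, so the monomorphism $X\to E_1$ factors through $tE_1$. Combined with the $\uc$-exactness of $V_1\to\cdots\to V_d$ evaluated on $U\in\uc$, and with the minimality of the original $d$-extension, the goal is to force $V_1=0$, which yields $fE_1=0$ and hence $E_1=tE_1\in\tc$.

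The main obstacle lies precisely in this last deduction: showing $V_1=0$, or equivalently $fE_1=0$. I expect this to require a careful use of condition (iii) in \cref{thm:torsion-to-dtorsion}, namely that $\Ext^{d-1}_\ac(tM,fM')=0$ for all $M,M'\in\mc$, applied to a suitable truncation of the original $d$-extension (for instance, splitting off the last morphism $f_d$ to obtain a $(d-1)$-term exact sequence whose Ext-class must vanish by (iii)). The delicate point is to propagate the torsion information available at the endpoints $X,Y$ through the middle of the sequence in a way compatible with minimality.
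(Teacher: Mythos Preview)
Your treatment of closure under $d$-quotients is correct and matches the paper's argument exactly.

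For closure under $d$-extensions, your setup is right and coincides with the paper's: reduce to showing $E_1\in\uc$ via \cref{lem:second term}, consider the $d$-exact sequence $0\to tE_1\xrightarrow{\iota} E_1\to W_1\to\cdots\to W_d\to 0$ from the $d$-torsion structure, and factor the monomorphism $e_0\colon X\to E_1$ through $\iota$. However, you do not complete the argument, and your proposed route via condition (iii) of \cref{thm:torsion-to-dtorsion} is a red herring. The paper never invokes that $\Ext^{d-1}$-vanishing condition here; moreover, the truncation you suggest produces an exact sequence whose right end term (the image of $E_{d-1}\to E_d$) need not lie in $\mc$, so condition (iii) does not obviously apply.

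The idea you are missing is a \emph{null-homotopy argument}. After factoring $e_0$ through $\iota$, one extends to a morphism of $d$-exact sequences $(g_i)$ from the original minimal $d$-extension to the torsion sequence of $E_1$, with $g_1=1_{E_1}$. Now one uses the $\uc$-exactness of $W_1\to\cdots\to W_d$ \emph{starting from the right}: since $Y\in\uc$, the map $g_{d+1}\colon Y\to W_d$ lifts through $w_{d-1}$, and one propagates backwards using weak kernels to build homotopy maps $h_i$. At the leftmost stage this yields $1_{E_1}=\iota h_0+h_1 e_1$. Composing with $e_0$ and using $e_1 e_0=0$ gives $\iota h_0 e_0=e_0$; left minimality of $e_0$ (which holds because the $d$-extension is minimal, via \cref{rem: minimality and radical}) then forces $\iota h_0$ to be an isomorphism. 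Hence $h_0$ is a split monomorphism and $E_1$ is a summand of $tE_1\in\uc$. Your goal of showing $V_1=0$ is in fact equivalent to this conclusion, but the mechanism for reaching it is the homotopy construction, not an $\Ext$-vanishing argument.
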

	
	\begin{proof}
		As $\uc$ is a $d$-torsion class in $\mc$, we have that $\uc = \tc \cap \mc$ for some torsion class $\tc$ in $\ac$ as in \cref{torsion setup}. Recall from \cref{Lemma:ClosureMinimality} that it suffices to consider minimal $d$-extensions and minimal $d$-quotients.
		
		We first show that $\uc$ is closed under minimal $d$-quotients. Let $f \colon M \to U$ be a morphism in $\mc$ with $U \in \uc$ and consider its minimal $d$-cokernel
		\[
		M \xrightarrow{f} U \xrightarrow{f_1} V_1 \xrightarrow{f_2} \cdots \xrightarrow{f_{d-1}} V_{d-1}\xrightarrow{f_{d}} V_{d}\to 0.
		\]
		By construction of the minimal $d$-cokernel (see \cref{Example:MinimaldCokernel}), we have that $V_i$ arises as the minimal left $\mc$-approximation of $\Coker f_{i-1}$ for all $i= 1,\ldots,  d$, where we set $f_0 = f$. As $U \in \uc = \tc \cap \mc$ and $\tc$ is closed under quotients, \cref{lemma: minimal M-appr. in U} implies that $V_i \in \uc$ for all $i= 1,\ldots,  d$.
		This shows that $\uc$ is closed under minimal $d$-quotients. 
		
		We next prove that $\uc$ is closed under minimal $d$-extensions. Consider a minimal $d$-exact sequence 
		\[
		0 \to X \xrightarrow{e_0} E_1 \xrightarrow{e_1} \cdots \xrightarrow{e_{d-1}} E_d \xrightarrow{e_d} Y \to 0
		\]
		in $\mc$ with $X$ and $Y$ in $\uc$. 
		By \cref{rem: minimality and radical}, the morphism $e_i$ is left minimal for $i=0,\dots,d-2$. As $\uc$ is a $d$-torsion class, we obtain the solid part of the diagram
		\[
		\begin{tikzcd}[column sep=22, row sep=30]
			0 \arrow[r] & X \arrow[r,"e_0"] \arrow[d, dashed, "g_0"] & E_1 \arrow[r, "e_1"] \arrow[d, equal] \arrow[dl, dashed, "h_0"] & E_2 \arrow[r, "e_2"] \arrow[d, dashed, "g_2"] \arrow[dl, dashed, "h_1"] & \cdots \arrow[r, "e_{d-1}"] & E_d \arrow[r, "e_d"] \arrow[d, dashed, "g_d"] \arrow[dl, dashed, "h_{d-1}"] & Y \arrow[r] \arrow[d, dashed, "g_{d+1}"] \arrow[dl, dashed, "h_d"] & 0 \\
			0 \arrow[r] & tE_1 \arrow[r, "\iota"] & E_1 \arrow[r, "w_0"] & W_1 \arrow[r, "w_1"] & \cdots \arrow[r, "w_{d-2}"] & W_{d-1} \arrow[r, "w_{d-1}"] & W_d \arrow[r] & 0,
		\end{tikzcd}
		\]
		where the bottom row is the $d$-exact sequence associated to $E_1$ by $\uc$. In particular, the object $tE_1 \in \uc$ is the $d$-torsion subobject of $E_1$ with respect to $\uc$ by \cref{lem:sametorsionobject}, 
		and the sequence 
		\begin{equation}\label{Equation:AvoidingUExact}0 \rightarrow \Hom_\ac(U,W_1) \rightarrow \cdots \rightarrow \Hom_\ac(U,W_d) \rightarrow 0
		\end{equation}
		is exact for every $U$ in $\uc$. As $\iota$ is a right $\tc$-approximation and $X \in \uc = \tc \cap \mc$, there exists a morphism $g_0 \colon X \to tE_1$ making the left square commute. We can hence complete the diagram to a morphism $g$ of $d$-exact sequences by using the factorisation property for weak cokernels, see \cref{subsec:d-abelian categories}.
		
		Since \eqref{Equation:AvoidingUExact} is exact and $Y\in \uc$, the morphism \[w_{d-1} \circ - \colon \Hom_\ac(Y,W_{d-1}) \rightarrow \Hom_\ac(Y,W_d)\] is surjective. Hence, there exists a morphism $h_d\colon Y \to W_{d-1}$ with $w_{d-1} h_d=g_{d+1}$. As the bottom row is $d$-exact, there is an exact sequence
		\[
		\Hom_\ac(E_d,W_{d-2}) \rightarrow \Hom_\ac(E_d, W_{d-1}) \rightarrow \Hom_{\ac}(E_d,W_d).
		\]
		Using the commutativity of the rightmost square, we get $w_{d-1}(g_d-h_de_d)=0$, so \mbox{$g_d-h_de_d$} is in the kernel of the second morphism. By exactness, there exists a morphism \mbox{$h_{d-1}\colon E_d \to W_{d-2}$} such that $g_d-h_de_d = w_{d-2}h_{d-1}$, or equivalently $g_d= h_d e_d + w_{d-2}h_{d-1}$.
		
		We can repeat this process to obtain a homotopy of the map of complexes $g$. In particular, there are morphisms $h_0$ and $h_1$ such that $g_1 =1_{E_1} = \iota h_0 + h_1 e_1$. This implies that $\iota h_0 e_0 = e_0$, so $\iota h_0$ is an isomorphism by the left minimality of $e_0$. The morphism $h_0$ is hence a split monomorphism, so $E_1$ is contained in $\uc$. By \cref{lem:second term}, this implies that $E_i \in \uc$ for \mbox{$i=2,\dots,d$}, so the subcategory $\uc$ is closed under minimal $d$-extensions.
	\end{proof}
	
	\begin{remark}
		\cref{theorem: extension closed} implies that every $d$-torsion class is closed under $d$-cokernels.
	\end{remark}
	
	The remainder of this subsection is devoted to proving the converse of \cref{theorem: extension closed}. Since any $d$-torsion class in $\mc$ is contravariantly finite in $\mc$, we first establish that for a subcategory $\uc \subseteq \mc$ to be contravariantly finite, it is enough to assume closure under $d$-quotients.
	
	We first need a result on coimage factorisation in a $d$-cluster tilting subcategory. Recall from \cref{subsec:d-abelian categories} that a morphism $g$ in $\mc$ is a weak cokernel if there exists a morphism $f$ in $\mc$ such that $g$ is a weak cokernel of $f$.

	\begin{proposition}\label{Lemma: Coimage factorization}
		Let $f\colon M\to N$ be a morphism in $\mc$. Then there exists a factorisation $f=f_2\circ f_1$ in $\mc$ where $f_2$ is a monomorphism and $f_1$ is a composite of left minimal weak cokernels. 
	\end{proposition}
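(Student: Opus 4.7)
The plan is to construct $f_1$ inductively as a finite composition of left minimal weak cokernels, at each stage ``absorbing'' the kernel of the current map to $N$. Set $M^{(0)} = M$ and $f^{(0)} = f$. Given $f^{(i)} \colon M^{(i)} \to N$ in $\mc$, if $f^{(i)}$ is already a monomorphism, stop; otherwise, since $\mc$ is generating and contravariantly finite, choose a right $\mc$-approximation $\psi^{(i)} \colon K^{(i)} \to \Ker f^{(i)}$, which is automatically an epimorphism. Let $g^{(i)} \colon K^{(i)} \to M^{(i)}$ be the composition of $\psi^{(i)}$ with the inclusion $\Ker f^{(i)} \hookrightarrow M^{(i)}$, and following \cref{Example:MinimaldCokernel} construct the left minimal weak cokernel
\[
w^{(i)} \colon M^{(i)} \twoheadrightarrow \Coker g^{(i)} \cong \Image f^{(i)} \xrightarrow{\phi^{(i)}} M^{(i+1)}
\]
of $g^{(i)}$, where $\phi^{(i)}$ is the minimal left $\mc$-approximation of $\Image f^{(i)}$, which is a monomorphism since $\mc$ is cogenerating. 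Since $f^{(i)} \circ g^{(i)} = 0$, the weak cokernel property yields $f^{(i+1)} \colon M^{(i+1)} \to N$ with $f^{(i+1)} \circ w^{(i)} = f^{(i)}$.

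The key invariant is $\Image f^{(i)} \subseteq \Image f^{(i+1)}$ inside $N$. Indeed, $f^{(i+1)} \circ w^{(i)} = f^{(i)}$ together with the surjectivity of $M^{(i)} \twoheadrightarrow \Image f^{(i)}$ gives $f^{(i+1)} \circ \phi^{(i)} = (\Image f^{(i)} \hookrightarrow N)$, so restricted to the subobject $\phi^{(i)}(\Image f^{(i)})$ of $M^{(i+1)}$, the map $f^{(i+1)}$ has image exactly $\Image f^{(i)}$. If this containment is strict, then $\ell(\Image f^{(i)})$ strictly increases, bounded above by $\ell(N)$. Otherwise $\Image f^{(i+1)} = \Image f^{(i)}$, and I claim that in this case $f^{(i+1)}$ must itself be a monomorphism. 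Granting this, the process terminates in at most $\ell(N)$ steps, and setting $f_2 = f^{(i)}$ at termination and $f_1 = w^{(i-1)} \circ \cdots \circ w^{(0)}$ produces the required factorisation.

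The main obstacle is proving that $\Image f^{(i+1)} = \Image f^{(i)}$ forces $f^{(i+1)}$ to be a monomorphism; this is where the left minimality of the approximation $\phi^{(i)}$ enters in an essential way. Under this equality, $f^{(i+1)}$ factors as $(\Image f^{(i)} \hookrightarrow N) \circ q$ for a unique epimorphism $q \colon M^{(i+1)} \twoheadrightarrow \Image f^{(i)}$. Combining with $f^{(i+1)} \circ \phi^{(i)} = (\Image f^{(i)} \hookrightarrow N)$ and cancelling the monomorphism on the right gives $q \circ \phi^{(i)} = 1_{\Image f^{(i)}}$. Hence $\phi^{(i)} \circ q$ is an idempotent endomorphism of $M^{(i+1)}$ satisfying $(\phi^{(i)} \circ q) \circ \phi^{(i)} = \phi^{(i)}$, so by left minimality of $\phi^{(i)}$ it is an isomorphism. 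An idempotent isomorphism is the identity, so $\phi^{(i)}$ is an isomorphism with inverse $q$, and consequently $f^{(i+1)} = (\Image f^{(i)} \hookrightarrow N) \circ (\phi^{(i)})^{-1}$ is a monomorphism, as required.
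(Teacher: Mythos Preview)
Your proof is correct and follows essentially the same strategy as the paper's: both iteratively take the image of the current map to $N$, apply the minimal left $\mc$-approximation, observe that the images form an increasing chain of subobjects of $N$, and use the finite length hypothesis together with left minimality of the approximation to terminate. The only difference is cosmetic---you make explicit the morphism $g^{(i)}$ (via a right $\mc$-approximation of the kernel) of which $w^{(i)}$ is a left minimal weak cokernel, whereas the paper leaves this implicit in the phrase ``by construction''.
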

	
	\begin{proof}
		Let $E_0$ denote the image of $f$, and let $\iota_0\colon E_0\to M_0$ be a minimal left $\mc$-approximation. 
		The inclusion $E_0\to N$ lifts via $\iota_0$ to a morphism $g_0\colon M_0\to N$. Let $E_1$ be the image of $g_0$, and note that $E_0\subseteq E_1$. Repeating this procedure, we get a subobject $E_i\subseteq N$, a minimal left approximation $\iota_i\colon E_i\to M_i$, and a lift $g_i\colon M_i\to N$ for each $i\geq 0$. In particular, the $E_i$'s form an increasing sequence  $E_0\subseteq E_1\subseteq E_2\subseteq \cdots \subseteq E_i\subseteq \cdots$ of subobjects of $N$. 
		
		Since $\ac$ is of finite length, this sequence has to stabilise, say at $E_j$. This implies that the image of $g_j\colon M_j\to N$ is $E_j$. But then the inclusion $\iota_j\colon E_j\rightarrow M_j$ is a split monomorphism, and hence an isomorphism since it is also left minimal.  This shows that $E_j\in \mc$. Now let $f_2\colon E_j\to N$ be the inclusion, 
		and let $f_1\colon M\to E_j$ be the composite $M\to E_0\rightarrow E_j$.
		Note that this is equal to the composite
		\[
		\begin{tikzcd}[column sep=0.35cm, row sep=0.5cm]
			M \arrow[two heads]{rd}\arrow{rr}{}  && M_0 \arrow{rr}{} \arrow[two heads]{rd} && M_1 \arrow{rr}{} \arrow[two heads]{rd} &&  \cdots \arrow{rr}{} \arrow[two heads]{rd} && M_{j-1} \arrow{rr}{} \arrow[two heads]{rd} && M_j. \\
			& E_0 \arrow[tail]{ru}{\iota_0} \arrow[tail]{rr} && E_1 \arrow[tail]{ru}{\iota_1} \arrow[tail]{rr} &&  E_2 \arrow[tail]{ru}{\iota_2} & \cdots & E_{j-1} \arrow[tail]{ru}{\iota_{j-1}} \arrow[tail]{rr} && E_j\arrow[tail, "\cong"]{ru} & 
		\end{tikzcd}
		\]
		By construction, the morphisms $M\to M_0$ and $M_i\to M_{i+1}$ for $i=0,\ldots, j-1$ are left minimal weak cokernels. Since $f_1$ is a composite of such morphisms and the isomorphism $M_j\cong E_j$, this proves the claim.
	\end{proof}
	
	\begin{lemma}\label{Lemma:MinWeakCoker}
		Let $\uc\subseteq \mc$ be closed under $d$-quotients. If $g\colon M\to N$ is a left minimal weak cokernel with $M\in \uc$, then $N\in \uc$.
	\end{lemma}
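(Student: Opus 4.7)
The plan is to exploit the universal property of weak cokernels to compare $g$ with the first middle term produced by closure under $d$-quotients, and then use left minimality to show that $N$ is a summand.

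First, since $g$ is a left minimal weak cokernel, there exists some morphism $f\colon L\to M$ in $\mc$ such that $g$ is a weak cokernel of $f$. The hypothesis $M\in\uc$ together with closure of $\uc$ under $d$-quotients, applied to $f\colon L\to M$, produces a $d$-cokernel
\[
L\xrightarrow{f} M\xrightarrow{f_1} E_1\to E_2\to\cdots\to E_d\to 0
\]
with $E_i\in\uc$ for every $i$. In particular, $f_1\colon M\to E_1$ is itself a weak cokernel of $f$, and both $f_1$ and $g$ satisfy $f_1\circ f=0=g\circ f$.

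Next I would use the defining factorisation property of weak cokernels in both directions. From $f_1\circ f=0$ and the weak cokernel property of $g$, there exists $\alpha\colon N\to E_1$ with $\alpha\circ g=f_1$; symmetrically, from $g\circ f=0$ and the weak cokernel property of $f_1$, there exists $\beta\colon E_1\to N$ with $\beta\circ f_1=g$. Composing, $(\beta\alpha)\circ g=\beta\circ f_1=g$, and left minimality of $g$ then forces $\beta\alpha$ to be an isomorphism.

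Finally, $\beta\alpha$ being invertible makes $\alpha$ a split monomorphism, so $N$ is a direct summand of $E_1$. Since $E_1\in\uc$ and $\uc$, being closed under $d$-quotients, is closed under direct summands by \cref{rem: closed under summands 2}, we conclude $N\in\uc$. The only subtlety is the appeal to left minimality to promote the homotopy identity into an actual isomorphism; everything else is immediate from the hypotheses and definitions already set up in the paper.
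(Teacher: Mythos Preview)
Your proof is correct and essentially the same as the paper's: both rely on the factorisation property of weak cokernels together with left minimality to identify $N$ (up to summand) with the first term of a $d$-cokernel lying in $\uc$. The paper packages this by invoking \cref{Example:MinimaldCokernel} to say that a left minimal weak cokernel is the first map in the minimal $d$-cokernel of $f$, whereas you spell out the mutual factorisation argument directly and land $N$ as a summand of $E_1$; the content is the same.
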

	\begin{proof}
		Assume $g$ is a weak cokernel of a morphism $f$. Then $g$ is part of a $d$-cokernel of $f$, and if $g$ is left minimal, then it is part of the minimal $d$-cokernel of $f$; see \cref{Example:MinimaldCokernel}. This proves the claim.
	\end{proof}
	
	We now apply \cref{Lemma: Coimage factorization,Lemma:MinWeakCoker} to show that being closed under $d$-quotients implies being contravariantly finite.
	
	\begin{proposition} \label{prop: contravariantly finite}
		Let $\uc \subseteq \mc$ be closed under $d$-quotients and consider $M\in \mc$. Then there exists a minimal right $\uc$-approximation $U\to M$ which is a monomorphism. In particular, the subcategory $\uc$ is contravariantly finite in $\ac$.
	\end{proposition}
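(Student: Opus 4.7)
The plan is to build the right $\uc$-approximation by exhibiting a maximal subobject of $M$ lying in $\uc$, and then using the coimage factorisation of \cref{Lemma: Coimage factorization} to verify the approximation property.

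First I would fix $M\in\mc$ and consider the collection of subobjects $N\subseteq M$ in $\ac$ with $N\in \uc$. Since $0\in\uc$ this collection is nonempty, and because $\ac$ has finite length there exists a maximal element $U\hookrightarrow M$. Write $\iota\colon U\to M$ for the inclusion.

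Next I would show that $\iota$ is a right $\uc$-approximation. Given a morphism $f\colon V\to M$ with $V\in \uc$, apply \cref{Lemma: Coimage factorization} to factor $f$ as
\[
V \xrightarrow{f_1} E \xrightarrow{f_2} M,
\]
where $f_2$ is a monomorphism and $f_1$ is a composite of left minimal weak cokernels; by \cref{Lemma:MinWeakCoker} and the hypothesis that $\uc$ is closed under $d$-quotients, every intermediate object in this composite lies in $\uc$, hence $E\in\uc$. Now the natural map $U\oplus E\to M$ (given by $\iota$ and $f_2$) again belongs to $\mc$-to-$\mc$, and applying \cref{Lemma: Coimage factorization} to it produces a subobject $F\subseteq M$ with $F\in \uc$ containing the image $U+E\subseteq M$ (since the image of a morphism is contained in every intermediate subobject produced by the coimage construction). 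By the maximality of $U$ we must have $F=U$, hence $E\subseteq U$, and $f$ factors through $\iota$ as desired.

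Finally I would pass to minimality and deduce the global statement. Since $\ac$ is Krull--Remak--Schmidt, the right $\uc$-approximation $\iota$ admits a direct summand which is a minimal right $\uc$-approximation $U'\to M$; the inclusion $U'\hookrightarrow U$ being split and $\iota$ being a monomorphism force $U'\to M$ to remain a monomorphism. For the last assertion, given any $X\in \ac$, the functorial finiteness of $\mc$ in $\ac$ furnishes a right $\mc$-approximation $M\to X$ with $M\in\mc$; composing with the right $\uc$-approximation $U'\to M$ just constructed yields a right $\uc$-approximation $U'\to X$, showing that $\uc$ is contravariantly finite in $\ac$.

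The only delicate step is the approximation argument: I need to know that applying the coimage factorisation to the natural map $U\oplus E\to M$ really produces a subobject of $M$ containing $U+E$ rather than just some abstract object in $\uc$ mapping to $M$. This follows by tracking the construction in the proof of \cref{Lemma: Coimage factorization}, where the intermediate objects $E_i$ are by definition nested subobjects of the target containing the image of the original morphism; this is the step I would verify carefully.
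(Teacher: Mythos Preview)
Your proposal is correct and follows essentially the same route as the paper's proof: both pick a maximal $\uc$-subobject of $M$, use the coimage factorisation of \cref{Lemma: Coimage factorization} together with \cref{Lemma:MinWeakCoker} to show every morphism from $\uc$ factors through it, and deduce contravariant finiteness in $\ac$ via $\mc$. The only notable difference is that you pass to a summand via Krull--Schmidt to obtain minimality, whereas the paper simply observes that any monomorphism is automatically right minimal (if $\iota\circ g=\iota$ with $\iota$ mono then $g=1$), which is more direct.
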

	
	\begin{proof}
		Consider the set  of subobjects $U\subseteq M$ with $U\in \uc$. Note that this set is non-empty since $0\in \uc$.
		
		We first prove that this set has a unique maximal element. Indeed, since $\ac$ has finite length, we can choose $U\subseteq M$ with $U\in \uc$ and where $U$ is of maximal length with this property. Now let $V\subseteq M$ with $V\in \uc$, and consider the induced morphism
		\(
		U\oplus V\to M.
		\)
		By \cref{Lemma: Coimage factorization}, there exists $W\subseteq M$ with $W\in \mc$ such that $U\subseteq W\subseteq M$ and $V\subseteq W\subseteq M$, and such that the induced morphism $U\oplus V\to W$ is a composite of left minimal weak cokernels. Since $\uc$ is closed under $d$-quotients, it follows from \cref{Lemma:MinWeakCoker} that $W\in \uc$. But since $U$ is maximal with respect to the property that $U\in \uc$ and $U\subseteq M$, it follows that $U=W$. This implies that $V$ must be contained in $U$, and hence $U$ is the unique maximal subobject $U\subseteq M$ satisfying $U\in \uc$. 
		
		Now let $U'\to M$ be an arbitrary morphism with $U'\in \uc$. By \cref{Lemma: Coimage factorization}, there exists an object $V'$ such that $U'\to M$ factors through $V'$ and where the morphism $U'\rightarrow V'$ is a composite of left minimal weak cokernels and the morphism $V'\rightarrow M$ is a monomorphism. It follows that $V'\in \uc$ and $V'$ is a subobject of $M$.
		
		By the maximality of $U$, we get that $V'\subseteq U\subseteq M$, so the morphism $U'\to M$ also factors through $U$. This shows that the inclusion $U\subseteq M$ is a right $\uc$-approximation, which is minimal since it is a monomorphism. This proves the first claim. 
		
		Finally, the fact that  $\uc$ is contravariantly finite in $\ac$ follows from the fact that $\mc$ is contravariantly finite in $\ac$ and $\uc$ is contravariantly finite in $\mc$.
	\end{proof}
	
	Using the proposition above, we now show the converse of \cref{theorem: extension closed}.
	
	\begin{proposition}\label{prop:extension closed gives torsion}
		Let $\uc \subseteq \mc$ be closed under $d$-extensions and $d$-quotients.
		Then $\uc$ is a $d$-torsion class in $\mc$.
	\end{proposition}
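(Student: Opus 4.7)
The plan is to construct, for each $M \in \mc$, a $d$-exact sequence witnessing $\uc$ as a $d$-torsion class at $M$. By \cref{prop: contravariantly finite} I obtain a monic minimal right $\uc$-approximation $\iota_M \colon U_M \to M$, and the $d$-abelian structure on $\mc$ extends this to a $d$-exact sequence
\[
0 \to U_M \to M \to V_1 \to \cdots \to V_d \to 0.
\]
Since $U_M \in \uc$ is automatic, the remaining task is to verify that $V_1 \to \cdots \to V_d$ is $\uc$-exact.

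Fix $U \in \uc$. The $d$-kernel property gives an exact sequence $0 \to \Hom(U, U_M) \to \Hom(U, M) \to \Hom(U, V_1) \to \cdots \to \Hom(U, V_d)$. The right $\uc$-approximation property forces every morphism $U \to M$ to factor through $\iota_M$, so the composition $U \to M \to V_1$ is zero and $\Hom(U, M) \to \Hom(U, V_1)$ vanishes. Combined with the exactness of the sequence, this yields the desired exactness of $0 \to \Hom(U, V_1) \to \cdots \to \Hom(U, V_d)$ at every position except possibly the final one. The core of the proof is therefore to establish the surjectivity of $\Hom(U, V_{d-1}) \to \Hom(U, V_d)$.

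Given $h \colon U \to V_d$, I apply the dual of \cref{lem: minimal pushouts} to obtain a $d$-pullback diagram whose top row is a minimal $d$-extension $0 \to U_M \to F_1 \to \cdots \to F_d \to U \to 0$, endowed with a morphism of $d$-extensions to the original having outer components $1_{U_M}$ and $h$. Since $U_M, U \in \uc$ and $\uc$ is closed under $d$-extensions, \cref{prop: minimal} ensures that each $F_i$ lies in $\uc$. The induced morphism $F_1 \to M$ has $F_1 \in \uc$ and therefore factors through the right $\uc$-approximation as $F_1 \xrightarrow{r} U_M \xrightarrow{\iota_M} M$. Commutativity of the leftmost square together with $\iota_M$ being monic forces $r \circ (U_M \to F_1) = 1_{U_M}$, so the first morphism $U_M \to F_1$ of the pulled-back $d$-extension is a split monomorphism. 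By the standard Yoneda-product factorisation (splitting off the first short exact sequence trivialises the product), this pulled-back $d$-extension represents the zero class in $\Ext^d_\ac(U, U_M)$.

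It remains to translate this Ext vanishing into the required lift. Splitting $\iota_M$ off the original $d$-exact sequence produces a coresolution $0 \to M/U_M \to V_1 \to \cdots \to V_d \to 0$, and combined with the $d$-cluster tilting vanishings $\Ext^j_\ac(U, V_i) = 0$ for $1 \leq j \leq d-1$, this coresolution yields an isomorphism $\Ext^{d-1}_\ac(U, M/U_M) \cong \Coker(\Hom(U, V_{d-1}) \to \Hom(U, V_d))$ under which the class of $h$ corresponds to the pullback of the coresolution along $h$. The connecting map $\delta \colon \Ext^{d-1}_\ac(U, M/U_M) \to \Ext^d_\ac(U, U_M)$ attached to the short exact sequence $0 \to U_M \to M \to M/U_M \to 0$ is injective (since $\Ext^{d-1}_\ac(U, M) = 0$ when $d \geq 2$, with the classical Dickson argument covering $d = 1$) and sends the class of $h$ to the class of the pulled-back $d$-extension. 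Triviality of the latter thus forces $h$ to lie in the image of $\Hom(U, V_{d-1}) \to \Hom(U, V_d)$. The most delicate step is verifying that these three classes coincide via the Yoneda-product decomposition of the original $d$-exact sequence as $[0 \to U_M \to M \to M/U_M \to 0] \cdot [0 \to M/U_M \to V_1 \to \cdots \to V_d \to 0]$; once this identification is in place, the lift follows immediately.
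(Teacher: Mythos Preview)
Your proof is correct and follows the paper's approach almost exactly: both use \cref{prop: contravariantly finite} to obtain a monic right $\uc$-approximation $U_M \to M$, extend to a $d$-exact sequence, reduce to the surjectivity of $\Hom(U,V_{d-1}) \to \Hom(U,V_d)$, take a minimal $d$-pullback along $h\colon U \to V_d$, and use closure under $d$-extensions together with the approximation property to conclude that the first map of the pulled-back sequence is a split monomorphism.

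The only divergence is in the last inference. You pass through $\Ext$-groups: you argue that the pulled-back $d$-extension has trivial class in $\Ext^d_\ac(U,U_M)$, identify the obstruction to lifting $h$ with a class in $\Ext^{d-1}_\ac(U,M/U_M)$ via dimension shifting along the coresolution $0 \to M/U_M \to V_1 \to \cdots \to V_d \to 0$, and use injectivity of the connecting map to conclude. This is correct, but the paper's argument is more direct: once the first map $f_0$ of the pulled-back $d$-exact sequence is a split monomorphism, one constructs a contracting homotopy step by step using the weak-cokernel property, which forces the last map $f_d$ to be a split epimorphism. A section $s\colon U \to W_{d-1}$ composed with the pullback comparison map $W_{d-1} \to V_{d-1}$ then exhibits $h$ as lying in the image of $\Hom(U,V_{d-1}) \to \Hom(U,V_d)$, by commutativity of the rightmost square. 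This avoids the Yoneda-product identification you flag as delicate and requires no case distinction for $d=1$.
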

	
	\begin{proof}
		Consider an object $M \in \mc$. By the definition of a $d$-torsion class (see \cref{def:ntorsionclass}), we need to show that there exists a $d$-exact sequence 
		\[
		0 \rightarrow U_M \rightarrow M \rightarrow V_1 \rightarrow \cdots \rightarrow V_d \rightarrow 0
		\]
		where $U_M\in \uc$ and the sequence $0 \rightarrow \Hom_\mc(U,V_1) \rightarrow \cdots \rightarrow \Hom_\mc(U,V_d) \rightarrow 0$ is exact for every $U$ in $\uc$.
		
		Since $\uc$ is closed under $d$-quotients, \cref{prop: contravariantly finite} shows that we may take a minimal right $\uc$-approximation $f \colon U_M \to M$ which is a monomorphism.
		Taking the minimal $d$-cokernel of $f$ gives a $d$-exact sequence 
		\begin{equation}\label{eq:torsion sequence}
			0 \rightarrow U_M \xrightarrow f M \rightarrow V_1 \rightarrow \cdots \rightarrow V_d \rightarrow 0. 
		\end{equation}
		
		Let $U\in\uc$. As $f$ is a right $\uc$-approximation, we know that $\Hom_\mc(U,f)$ is an epimorphism. Thus, it follows from $d$-exactness of the sequence (\ref{eq:torsion sequence}) that
		\[
		0  \rightarrow \Hom_\mc(U,V_1) \rightarrow \cdots \rightarrow \Hom_\mc(U,V_{d-1}) \rightarrow \Hom_\mc(U,V_d)
		\]
		is exact. To finish our proof, we hence need to show that the rightmost morphism in this sequence is an epimorphism.
		
		Consider $h_d\in \Hom_\mc(U,V_d)$ and take a $d$-pullback of (\ref{eq:torsion sequence}) along $h_d$. This yields a commutative diagram
		\[
		\begin{tikzcd}
			0 \arrow[r] & U_M \arrow[r,"f_0"] & W_0 \arrow[r] \arrow[d,"h_0"] &W_1 \arrow[r] \arrow[d] & \cdots \arrow[r] & W_{d-1} \arrow[r,"f_d"] \arrow[d] & U \arrow[r] \arrow[d,"h_d"] & 0\\ 0 \arrow[r]& U_M \arrow[r, "f"]\arrow[u,equal]& M\arrow[r ] & V_1\arrow[r] & \cdots\arrow[r] & V_{d-1}\arrow[r]& V_d \arrow[r] & 0, \end{tikzcd}
		\]
		where the upper row is a $d$-exact sequence. By the dual of \cref{lem: minimal pushouts}, this $d$-extension can be assumed to be minimal, and then closedness of $\uc$ under $d$-extensions implies $W_i\in\uc$ for all $i=0,\ldots,d-1$ by \cref{Lemma:ClosureMinimality}.
		As $f$ is a right $\uc$-approximation, the morphism $h_0$ factors through $f$, so $f_0$ is a split monomorphism. It follows, by \cite[Proposition 2.6]{Jasso} and its dual, that $f_d$ is a split epimorphism, and hence $h_d$ factors through $V_{d-1}$. In particular, the morphism $\Hom_\mc(U,V_{d-1}) \rightarrow \Hom_\mc(U,V_d)$ is an epimorphism as required.
	\end{proof}
	
	We can now generalise the classical characterisation of torsion classes, cf.\ \cref{thm: dickson torsion}. Recall our standing assumption that subcategories are closed under direct summands.
	
	\begin{theorem}\label{cor: characterisation}
		Let $\mc$ be a $d$-cluster tilting subcategory of $\ac$. A subcategory $\uc \subseteq \mc$ is a $d$-torsion class if and only if it is closed under both $d$-extensions and $d$-quotients.
	\end{theorem}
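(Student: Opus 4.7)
The plan is straightforward, as both directions of the claimed equivalence have already been established as independent propositions in this section, and the theorem statement is precisely their combination.

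For the \emph{only if} direction, I would simply invoke \cref{theorem: extension closed}, which shows that any $d$-torsion class in $\mc$ is closed under $d$-extensions and $d$-quotients. The heart of that argument, which I would rely on without reproving, is the bridge provided by \cref{thm:torsion-to-dtorsion}: it gives a torsion class $\tc$ in $\ac$ satisfying \cref{torsion setup}, so that closure under $d$-quotients follows from the fact that iterated minimal left $\mc$-approximations of cokernels land in $\uc$ (by \cref{lemma: minimal M-appr. in U}), and closure under $d$-extensions follows from a homotopy-lifting argument against the defining $d$-torsion sequence of the first middle term.

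For the \emph{if} direction, I would invoke \cref{prop:extension closed gives torsion}, which shows that a subcategory closed under $d$-extensions and $d$-quotients is automatically a $d$-torsion class. Again, the key ingredients used there, which I would not redo here, are \cref{prop: contravariantly finite} (producing a minimal right $\uc$-approximation that is a monomorphism), followed by taking its minimal $d$-cokernel and verifying the $\uc$-exactness via a $d$-pullback together with the hypothesised closure under $d$-extensions.

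Thus the proof of the theorem reduces to one sentence combining the two propositions, and there is no real obstacle at this step: all of the substantive work has already been carried out in \cref{theorem: extension closed} and \cref{prop:extension closed gives torsion}. The only thing to be mindful of is to make sure the two propositions are cited in the correct directions of the biconditional.
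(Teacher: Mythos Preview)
Your proposal is correct and matches the paper's own proof exactly: the theorem is stated as the combination of \cref{theorem: extension closed} (necessity) and \cref{prop:extension closed gives torsion} (sufficiency), and the paper's proof is likewise a single sentence citing these two results.
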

	
	\begin{proof}
		The necessity follows from \cref{theorem: extension closed}, while the sufficiency follows from \cref{prop:extension closed gives torsion}. 
	\end{proof}
	
	We now demonstrate the use of \cref{cor: characterisation} in our running example.
	
	\begin{example}\label{ex:running3}
		Let $A$ and $\mc$ be as described in \cref{ex:running}. In \cref{ex:running2} we claimed that  \mbox{$\tc_1= \add\left\{\rep{3}\oplus\rep{1} \right\}$} and $\tc_2 = \add\left\{\rep{3}\oplus\rep{2\\3}\oplus\rep{2} \right\}$ are torsion classes  
		for which \mbox{$\tc_1\cap\mc= \add\left\{\rep{3}\oplus\rep{1} \right\}$} and $\tc_2\cap \mc = \add\left\{\rep{3}\oplus\rep{2\\3}\right\}$ are not $2$-torsion classes in $\mc$. We now use \cref{cor: characterisation} to explain why this is the case.
		
		Consider the exact sequence
		\[
		0 \longrightarrow \Rep{3} \longrightarrow \Rep{2\\3} \longrightarrow \Rep{1\\2} \longrightarrow \Rep{1} \longrightarrow 0.
		\]
		It is straightforward to check that this is a minimal $2$-extension in $\mc$. This implies that $\tc_1\cap \mc$ is not closed under $2$-extensions, so it is not a $2$-torsion class in $\mc$ by \cref{cor: characterisation}. 
		
		Similarly, using the same sequence, one can see that $\tc_2 \cap \mc$ is not closed under $2$-quotients.
		Therefore, \cref{cor: characterisation} implies that $\tc_2\cap \mc$ is not a $2$-torsion class.
	\end{example}
	
	A $d$-exact category is a pair $(\cc,\xc)$ consisting of an additive category $\cc$ and a class $\xc$ of $d$-exact sequences in $\cc$ satisfying certain axioms, see \cite[Definition 4.2]{Jasso}. One immediate consequence of our characterisation result is that any $d$-torsion class $\uc$ in $\mc$ carries the structure of a $d$-exact category. 
	
	\begin{corollary}\label{d-torsion is d-exact}
		Let $\uc \subseteq \mc$ be a $d$-torsion class. Consider the class $\xc$ of $d$-exact sequences in $\mc$ where all the terms are in $\uc$. Then $(\uc,\xc)$ is a $d$-exact category. 
	\end{corollary}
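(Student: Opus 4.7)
The plan is to derive this corollary as an essentially immediate consequence of \cref{cor: characterisation} combined with an external result of Klapproth cited in the introduction. The characterisation theorem guarantees that any $d$-torsion class $\uc \subseteq \mc$ is, in particular, closed under $d$-extensions. Klapproth's result (which the introduction explicitly flags as the ingredient being used here) asserts that a suitable subcategory of a $d$-abelian category which is closed under $d$-extensions inherits a canonical $d$-exact structure, where the distinguished $d$-exact sequences are precisely those in the ambient category whose terms all lie in the subcategory. So the proof should essentially consist of verifying the hypotheses of Klapproth's theorem for $\uc$ and then applying it.

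Concretely, the first step is to invoke \cref{cor: characterisation} to obtain that $\uc$ is closed under $d$-extensions (and also under $d$-quotients, although only extension closure is required for the $d$-exact structure). The second step is to check the remaining hypotheses needed by Klapproth, namely that $\uc$ is an additive subcategory closed under direct summands, hence in particular idempotent complete. Closure under direct summands follows either from \cref{rem: closed under summands} or from \cref{rem: closed under summands 2} (since any category closed under $d$-quotients is closed under summands), and idempotent completeness is then automatic because $\uc$ is a full subcategory of the Krull--Remak--Schmidt category $\ac$ closed under summands. The third step is the application of Klapproth's theorem, which yields that $(\uc, \xc)$ is a $d$-exact category with $\xc$ as defined in the statement.

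There is no real obstacle here beyond pinpointing the precise statement in \cite{Klapproth} and matching conventions: one needs to confirm that Klapproth's definition of the inherited $d$-exact structure coincides with our $\xc$ (i.e., that the distinguished sequences are exactly the $d$-exact sequences of $\mc$ with all terms in $\uc$), and that $\uc$ meets whatever mild ambient hypotheses Klapproth imposes (additivity, idempotent completeness). Both verifications are routine once \cref{cor: characterisation} is in hand, so the corollary is genuinely a one-line deduction once the correct reference is invoked.
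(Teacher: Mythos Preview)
Your proposal is correct and matches the paper's own proof essentially verbatim: invoke \cref{cor: characterisation} to obtain closure under $d$-extensions, then apply \cite[Corollary 4.12]{Klapproth}. The additional verifications you mention (closure under summands, idempotent completeness) are routine and are left implicit in the paper.
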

	
	\begin{proof}
		By \cref{cor: characterisation}, the subcategory $\uc$ is closed under $d$-extensions in $\mc$. The result hence follows by applying \cite[Corollary 4.15]{Klapproth}.
	\end{proof}
	
	Note that when viewing $\mc$ and $\uc$ as $d$-exangulated categories, see \cite{HerschendLiuNakaoka}, \cref{d-torsion is d-exact} moreover implies that $\uc$ is a $d$-exangulated subcategory of $\mc$ in the sense of \cite[Definition 3.7]{Haugland}. 
	
	\subsection{Closure under \texorpdfstring{$d$}{d}-extensions}\label{subsec: closure by indecs} 
	To check if a subcategory $\uc \subseteq \mc$ is closed under $d$-extensions, it is necessary to determine the middle terms of any minimal $d$-extension between any two (not necessarily indecomposable) objects in $\uc$. In this subsection we show that under certain conditions, it is enough to understand the $d$-extensions between indecomposable objects. 
	The main result is the following. 
	
	\begin{theorem} \label{prop: extension closure indec}
		Suppose $\uc \subseteq \mc$ is closed under $d$-extensions with indecomposable end terms and all $d$-quotients. Then $\uc$ is closed under all $d$-extensions.
	\end{theorem}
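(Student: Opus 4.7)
The plan is to induct on the total number $n = s(X) + s(Y)$ of indecomposable summands of the end terms of a minimal $d$-extension $\xi \colon 0 \to X \to E_1 \to \cdots \to E_d \to Y \to 0$ with $X, Y \in \uc$. By \cref{rem: only look at minimal} only minimal $d$-extensions need be considered, and by \cref{lem:second term} it suffices to show $E_1 \in \uc$. The base case $n = 2$ (both end terms indecomposable) is exactly the hypothesis. For the inductive step I would, by symmetry, assume $Y = Y_1 \oplus Y_2$ with both summands nonzero; the case where only $X$ is decomposable is handled dually, by $d$-pushouts along the projections $X \to X_j$ together with a $d$-pullback along the diagonal $Y \to Y \oplus Y$.

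For each $j \in \{1,2\}$, take the minimal $d$-pullback of $\xi$ along $\iota_j \colon Y_j \hookrightarrow Y$ via the dual of \cref{lem: minimal pushouts}, yielding a minimal $d$-extension $\xi_j \colon 0 \to X \to F_1^{(j)} \to \cdots \to F_d^{(j)} \to Y_j \to 0$ and a morphism $\xi_j \to \xi$ whose start map is $\mathrm{id}_X$ and end map is $\iota_j$. Since $s(X) + s(Y_j) < n$, the induction hypothesis gives $F_1^{(j)} \in \uc$, and hence $F_1^{(1)} \oplus F_1^{(2)} \in \uc$. Next, take the $d$-pushout of the direct sum $\xi_1 \oplus \xi_2$ along the codiagonal $\nabla \colon X \oplus X \to X$, producing by \cref{lem: minimal pushouts} a minimal $d$-extension $\xi' \colon 0 \to X \to G_1 \to \cdots \to G_d \to Y \to 0$. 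Interpreted in $\ac$, the first pushout square identifies $G_1$ with the classical cokernel of the morphism $\mu \colon X \to F_1^{(1)} \oplus F_1^{(2)}$ given by $\mu(x) = (i_1(x), -i_2(x))$, where $i_j$ is the start map of $\xi_j$. Since $G_1$ already belongs to $\mc$, it is its own minimal left $\mc$-approximation and therefore appears as the first term of the minimal $d$-cokernel of $\mu$ constructed in \cref{Example:MinimaldCokernel}. Closure of $\uc$ under $d$-quotients applied to $F_1^{(1)} \oplus F_1^{(2)} \in \uc$ then forces $G_1 \in \uc$.

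To conclude, combining the pullback morphisms $\xi_j \to \xi$ produces a morphism $\xi_1 \oplus \xi_2 \to \xi$ with start map $\nabla$ and identity end map. The factorisation property of the $d$-pushout along $\nabla$, which underpins \cref{lem: minimal pushouts}, then provides a morphism $\xi' \to \xi$ with identities at both end terms, i.e., an equivalence $\xi \equiv \xi'$. Since $\xi$ is minimal, \cref{prop: minimal} makes $\xi$ a direct summand of $\xi'$, so $E_1$ is a direct summand of $G_1 \in \uc$, whence $E_1 \in \uc$ by closure under summands. The hardest part will be justifying the factorisation property of the $d$-pushout and the description of $G_1$ as a classical cokernel of $\mu$ from the first pushout square; both should be extractable from the $d$-pushout construction in \cite[Section 2.3]{Jasso}.
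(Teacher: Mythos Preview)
Your overall strategy is close in spirit to the paper's, but there are two genuine gaps.

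First, the case where $X$ is decomposable and $Y$ is indecomposable is \emph{not} dual to the case you handle. Your hypotheses are asymmetric: you have closure under $d$-quotients, not under $d$-subobjects. If you push out along the projections $X \to X_j$ and then pull back the sum along the diagonal $Y \to Y \oplus Y$, the term $G_1$ sits as a \emph{kernel} in the resulting mapping-cone $d$-exact sequence, and closure under $d$-quotients says nothing about kernels. The paper treats this case by a genuinely different (non-dual) trick in \cref{lem: indec first term}: pushing out $\xi$ along $\pi \colon X \to X_1$ yields a map $g \colon E_1 \to F_1$ whose kernel in $\ac$ is $X_2$, and one obtains a new minimal $d$-extension $0 \to X_2 \to E_1 \to F_1 \to G_2 \to \cdots \to G_d \to 0$. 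Closure under $d$-quotients (applied to $F_1 \in \uc$) gives $G_i \in \uc$, and then induction on $s(X)$ finishes. No duality is invoked.

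Second, your identification of $G_1$ with the classical cokernel of $\mu$ is not justified. A $d$-pushout diagram is characterised by its mapping cone being $d$-exact (\cite[Proposition~4.8]{Jasso}), not by the leftmost square being a pushout in $\ac$; and the minimal $d$-pushout provided by \cref{lem: minimal pushouts} has no reason to make that square cocartesian. In general $\operatorname{coker}_\ac(\mu)$ need not lie in $\mc$, so it cannot equal $G_1$. What one does know is that the mapping cone $0 \to X \oplus X \to X \oplus F_1^{(1)} \oplus F_1^{(2)} \to E_1 \oplus F_2^{(1)} \oplus F_2^{(2)} \to \cdots \to E_d \to 0$ (of the combined map $\xi_1 \oplus \xi_2 \to \xi$) is a $d$-extension whose second term lies in $\uc$. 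The paper then uses a radical argument to show that even if this $d$-cokernel is not minimal, the object $E_1$ survives as a summand of the first term of the minimal $d$-cokernel, hence lies in $\uc$; this replaces your cokernel identification.
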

	
	We apply the theorem above in \cref{sec: higher AA,sec:Higher NA}, where we use it to give a combinatorial description of $d$-torsion classes of higher Auslander algebras of type $\mathbb A$ and higher Nakayama algebras of type $\mathbb{A}$ and $\mathbb{A}_\infty^\infty$. 
	
	In order to prove \cref{prop: extension closure indec}, recall first from \cref{Lemma:ClosureMinimality} that we may focus our attention purely on minimal $d$-extensions and minimal $d$-quotients. Our first step is to show that when closing a subcategory under $d$-extensions, it may suffice to consider $d$-extensions where the first term is indecomposable.
	
	\begin{lemma}\label{lem: indec first term}
		Suppose $\uc \subseteq \mc$ is closed under $d$-quotients. If $\uc$ is closed under $d$-extensions with indecomposable first term, then $\uc$ is closed under all $d$-extensions. 
	\end{lemma}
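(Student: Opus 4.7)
I will argue by induction on the number $n$ of indecomposable summands of the first term $X$. By \cref{rem: only look at minimal}, \cref{rem: closed under summands 2}, and \cref{lem:second term}, it suffices to show that for any minimal $d$-extension $\xi \colon 0 \to X \to E_1 \to \cdots \to E_d \to Y \to 0$ with $X, Y \in \uc$, the first middle term $E_1$ lies in $\uc$. Each indecomposable summand of $X$ lies in $\uc$ by closure under summands, so the base case $n=1$ is immediate from the hypothesis. For $n \geq 2$, I decompose $X = X_1 \oplus X_2'$ with $X_1$ indecomposable and $X_2'$ the sum of the remaining $n-1$ indecomposable summands.

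Applying \cref{lem: minimal pushouts} to the split projection $\pi_2 \colon X \to X_2'$ produces a morphism of $d$-extensions
\[
\begin{tikzcd}[column sep=16, row sep=22]
0 \arrow[r] & X \arrow[r, "f_0"] \arrow[d, "\pi_2"] & E_1 \arrow[r] \arrow[d, "\beta_1"] & E_2 \arrow[r] \arrow[d] & \cdots \arrow[r] & E_d \arrow[r] \arrow[d] & Y \arrow[r] \arrow[d, equal] & 0 \\
0 \arrow[r] & X_2' \arrow[r] & G_1 \arrow[r] & G_2 \arrow[r] & \cdots \arrow[r] & G_d \arrow[r] & Y \arrow[r] & 0
\end{tikzcd}
\]
whose bottom row $\tilde\xi_2$ is a minimal $d$-extension. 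Since $X_2'$ has $n-1$ indecomposable summands and both end terms of $\tilde\xi_2$ lie in $\uc$, the inductive hypothesis yields $G_1, \ldots, G_d \in \uc$.

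The main obstacle is to use this together with $X_1 \in \uc$ to conclude $E_1 \in \uc$. My plan is to extract from the leftmost pushout square a short exact sequence in $\ac$
\[
0 \to X_1 \to E_1 \to G_1 \to 0,
\]
i.e.\ to show that $\beta_1$ is surjective with kernel $f_0(X_1) \cong X_1$. Once established, the conclusion is immediate: for $d \geq 2$ the $d$-cluster tilting property yields $\Ext_\ac^1(G_1, X_1) = 0$, so the sequence splits and $E_1 \cong X_1 \oplus G_1 \in \uc$; for $d=1$ the sequence is itself a $1$-extension with indecomposable first term $X_1$ and end terms in $\uc$, so the hypothesis applies directly. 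Verifying the short exact sequence is the subtle step, since Jasso's $d$-pushout in $\mc$ need not coincide with the pushout in the ambient abelian category $\ac$; one might have to argue from the minimality of the pushout combined with the specific form of the split surjection $\pi_2$, or else work instead with the $d$-cokernel of the monomorphism $f_0 \iota_1 \colon X_1 \to E_1$ as a $d$-extension with indecomposable first term $X_1$ and first middle term $E_1$, reducing the problem to showing its last term lies in $\uc$.
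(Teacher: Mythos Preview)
Your main argument has a genuine gap: the short exact sequence $0 \to X_1 \to E_1 \to G_1 \to 0$ does not exist in general. Unravelling the minimal $d$-pushout along the split epimorphism $\pi_2$, the map $\beta_1$ factors as
\[
E_1 \twoheadrightarrow E_1/f_0(X_1) \hookrightarrow G_1,
\]
where the second map is the minimal left $\mc$-approximation of $E_1/f_0(X_1)$ (a monomorphism since $\mc$ is cogenerating). Thus $\ker\beta_1 \cong X_1$ is correct, but $\beta_1$ is surjective only when $E_1/f_0(X_1)$ already lies in $\mc$, which is not guaranteed. So the $\Ext^1$-splitting argument for $d\geq 2$ cannot be invoked, and your suggestion (1) about minimality and the split form of $\pi_2$ does not remedy this.

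Your fallback suggestion (2) can be made to work, but it amounts to the paper's own proof with the roles of $X_1$ and $X_2'$ interchanged. One shows that $\beta_1$ is left minimal (since $X \to X_2' \to G_1$ is the composite of an epimorphism with a left minimal map), so $\beta_1$ is the first map in the minimal $d$-cokernel of $f_0\iota_1 \colon X_1 \to E_1$. The remaining terms $H_2,\ldots,H_d$ of this $d$-cokernel then lie in $\uc$ because $G_1\in\uc$ and $\uc$ is closed under $d$-quotients. One obtains a $d$-extension
\[
0 \to X_1 \to E_1 \xrightarrow{\beta_1} G_1 \to H_2 \to \cdots \to H_d \to 0
\]
with indecomposable first term and last term in $\uc$; after splitting off a contractible summand if $\beta_1\notin\Rad$, the hypothesis forces $E_1\in\uc$. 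This is exactly the paper's argument, which instead pushes out along $X \to X_1$ first (using the hypothesis on the bottom row directly, since $X_1$ is indecomposable), and then builds the analogous $d$-extension with first term $X_2$ to iterate. So the idea is right, but the short exact sequence shortcut is not available; you have to go through the $d$-cokernel route and handle the radical/minimality bookkeeping as the paper does.
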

	
	\begin{proof}
		Assume that $\uc$ is closed under $d$-quotients and under $d$-extensions with indecomposable first term. Let 
		\begin{equation}\label{Equation:OriginalExtension}
			0\rightarrow X\rightarrow E_1 \rightarrow \cdots \rightarrow E_d \rightarrow Y \rightarrow 0
		\end{equation}
		be a minimal $d$-extension with $X,Y\in\uc$. We want to show that $E_i \in \uc$ for all $i=1,\dots,d$. By \cref{lem:second term}, it is sufficient to check that $E_1\in\uc$. 
		
		If $X$ is indecomposable, we are done by assumption. Suppose hence that $X=X_1\oplus X_2$, where $X_1$ is indecomposable and $X_2 \neq 0$. We take a $d$-pushout of the $d$-extension along the projection $\pi\colon X\rightarrow X_1$. This yields a commutative diagram
		\[
		\begin{tikzcd}
			0 \arrow[r] & X \arrow[r]\arrow[d] & E_1 \arrow[r]\arrow[d] & \cdots\arrow[r] & E_d\arrow[r]\arrow[d] & Y \arrow[r]\arrow[d, equal]& 0\\
			0 \arrow[r] & X_1 \arrow[r] & F_1 \arrow[r] & \cdots\arrow[r] & F_d\arrow[r] & Y \arrow[r]& 0
		\end{tikzcd}
		\]
		where the lower sequence is $d$-exact and can be assumed to be minimal by \cref{lem: minimal pushouts}. As $X_1$ is indecomposable, we have $F_i\in\uc$ for $1\leq i\leq d$. By \cite[Proposition 4.8 (ii)]{Jasso}, we have a  $d$-exact sequence
		\[
		0\to X\to X_1\oplus E_1\to F_1\oplus E_2\to \cdots \to F_{d-1}\oplus E_d\to F_d\to 0.
		\]
		Since $X\cong X_1\oplus X_2$, this complex can be written as the sum of the identity morphism 
		\[
		0\to X_1\xrightarrow{1_{X_1}}X_1\to 0 \to \cdots \to 0 \to 0\to 0
		\]
		and a $d$-exact sequence
		\begin{equation}\label{d-exactSeq1}
			0\to X_2\to E_1\to F_1\oplus E_2\to \cdots \to F_{d-1}\oplus E_d\to F_d\to 0.  
		\end{equation}
		Next we choose a decomposition $E_1\cong E_1''\oplus E_1'$ and $F_1\cong E_1''\oplus \Tilde{F_1}$ such that the morphism $E_1\to F_1$ becomes
		\begin{align*}
			\begin{pmatrix}
				1_{E_1''} & 0 \\ 0 & f
			\end{pmatrix}\colon E_1''\oplus E_1' \rightarrow E_1''\oplus \Tilde{F_1} 
		\end{align*}
		with $f \in \Rad_\ac(E_1', \Tilde{F_1})$. In particular, $E_1''$ is a summand of $F_1$, and is therefore contained in $\uc$. Hence, $E_1'\in \uc$ if and only if $E_1\in \uc$. Furthermore, \eqref{d-exactSeq1} can be written as a sum of the identity morphism
		\[
		0\to 0\to E_1''\xrightarrow{1_{E_1''}}E_1''\to 0 \to \cdots \to 0 \to 0\to 0
		\]
		and a $d$-exact sequence
		\begin{equation*}
			0\to X_2\to E'_1\to \Tilde{F_1}\oplus E_2\to \cdots \to F_{d-1}\oplus E_d\to F_d\to 0.  
		\end{equation*}
		By minimality of \eqref{Equation:OriginalExtension}, the morphism $E_1\to E_2$ is in the Jacobson radical, and hence the induced morphism $E_1'\to E_2$ is also in the Jacobson radical. Combining this with the fact that \mbox{$f\colon E_1'\to \Tilde{F_1}$} is in the Jacobson radical, we get that $E_1'\to \Tilde{F_1'}\oplus E_2$ is in the Jacobson radical. Hence, if we let
		\[
		0\to X_2\to E_1'\to F'_1\to \cdots \to F'_{d-1}\to F_d'\to 0
		\]
		be the minimal $d$-cokernel of $X_2\to E_1'$, then this must also give a minimal $d$-extension. By \Cref{prop: minimal}, the term $F'_d$ is a direct summand of $F_d$, and must therefore be in $\uc$. 
		Obviously, the object $X_2$ has fewer indecomposable summands than $X$. If $X_2$ is indecomposable, we know that $E'_1\in \uc$ (which is equivalent to $E_1\in \uc$), as the end terms $X_2$ and  $F_d'$ of the above minimal $d$-extension are in $\uc$. If not, we repeat the argument to eventually show that $E_1\in \uc$.
	\end{proof}
	
	We are now ready to give the proof of \cref{prop: extension closure indec}.
	
	\begin{proof}[Proof of \cref{prop: extension closure indec}]
		Suppose that we have a minimal $d$-extension
		\begin{align*}
			0 \to X \xrightarrow{e_0} E_1 \xrightarrow{e_1} E_2 \xrightarrow{e_2} \cdots \xrightarrow{e_{d-1}} E_d \xrightarrow{e_d} Y \to 0
		\end{align*}
		with $X, Y \in \uc$. By \cref{lem: indec first term}, we may assume that $X$ is indecomposable. Let $Y= \bigoplus_{j=1}^t Y_j$, where each $Y_j$ is indecomposable. For each inclusion $\iota_j \colon Y_j \to Y$, consider a $d$-pullback diagram
		\begin{equation}
			\begin{tikzcd}[column sep=22, row sep=22]
				0 \arrow[r] & X \arrow[r,"f_{j,0}"] \arrow[d,equal,"h_{j,0}"] & F_{j,1} \arrow[r,"f_{j,1}"] \arrow[d,"h_{j,1}"] & \cdots \arrow[r,"f_{j,d-1}"] & F_{j,d} \arrow[r,"f_{j,d}"] \arrow[d,"h_{j,d}"] & Y_j\arrow[r] \arrow[d, "\iota_j"] & 0 \\
				0 \arrow[r] & X \arrow[r,"e_0"] & E_1 \arrow[r,"e_1"] & \cdots  \arrow[r,"e_{d-1}"] & E_d \arrow[r,"e_d"] & Y \arrow[r] & 0. 
			\end{tikzcd} \label{eq 1}
		\end{equation}
		By the dual of \cref{lem: minimal pushouts}, the top $d$-extension can be chosen to be minimal. Consequently, each of the morphisms $f_{j,1}, \dots, f_{j,d-1}$ is in the Jacobson radical. Moreover, the middle objects $F_{j,1},\dots,F_{j,d}$ are in $\uc$ since $\uc$ is closed under $d$-extensions between indecomposables.
		
		Now look at the $d$-extension 
		\begin{align*}
			0 \to \bigoplus_{j=1}^t X \xrightarrow{f_0} \bigoplus_{j=1}^t F_{j,1} \xrightarrow{f_1}  \cdots \xrightarrow{f_{d-1}}  \bigoplus_{j=1}^t F_{j,d} \xrightarrow{f_d} \bigoplus_{j=1}^t Y_j  \to 0  
		\end{align*} 
		given by the direct sum of all the upper $d$-extensions obtained as in (\ref{eq 1}). Consider the induced map
		\begin{equation*}
			\begin{tikzcd}[scale=0.8,column sep=18, row sep=22]
				0 \arrow[r] & \bigoplus_{j=1}^t X \arrow[r,"f_0"] \arrow[d,"{h_0
				}"] & \bigoplus_{j=1}^t F_{j,1} \arrow[r,"f_1"] \arrow[d, "{h_1}"] & \cdots \arrow[r,"f_{d-1}"] & \bigoplus_{j=1}^t F_{j,d} \arrow[r,"f_d"] \arrow[d,"h_d"] & \bigoplus_{j=1}^t Y_j\arrow[r] \arrow[d, equal] & 0 
				\\
				0 \arrow[r] & X \arrow[r,"e_0"] & E_1 \arrow[r,"e_1"] & \cdots  \arrow[r,"e_{d-1}"] & E_d \arrow[r,"e_d"] & Y \arrow[r] & 0
			\end{tikzcd} \label{eq 3}
		\end{equation*}
		with $h_i =
		\begin{psmallmatrix}
			h_{1,i} & \cdots & h_{t,i}
		\end{psmallmatrix}$ for $i = 0,\dots,d$. This is a $d$-pushout diagram by \cite[Proposition 4.8]{Jasso}, and thus the associated mapping cone 
		\begin{align}
			0 \to \bigoplus_{j=1}^t X \xrightarrow{\begin{psmallmatrix}
					h_0 \\ -f_0
			\end{psmallmatrix}} X \oplus \bigoplus_{j=1}^t F_{j,1} \xrightarrow{\begin{psmallmatrix}
					e_0 & h_1 \\ 0 &  -f_1
			\end{psmallmatrix}} E_1 \oplus \bigoplus_{j=1}^t F_{j,2} \rightarrow \cdots \rightarrow E_d \to 0 \label{eq 2}
		\end{align}
		is a $d$-extension. Note that the term $X \oplus \bigoplus_{j=1}^t F_{j,1}$ lies in $\uc$. If (\ref{eq 2}) is given by the minimal $d$-cokernel of the first morphism, we are hence done by closure under minimal $d$-quotients. So suppose this $d$-cokernel is not minimal. By \cref{rem: minimal}, it is then isomorphic to the direct sum of the minimal $d$-cokernel and shifted complexes of the form $N \xrightarrow{1} N$. In particular, if 
		\begin{align*}
			X \oplus \bigoplus_{j=1}^t F_{j,1} \xrightarrow{\partial_1} M_1 \xrightarrow{\partial_2} M_2 \to \cdots \to M_d \to 0
		\end{align*}
		is the minimal $d$-cokernel of $\begin{psmallmatrix}
			h_0 \\ -f_0
		\end{psmallmatrix}$, then there is a commutative diagram 
		\[
		\begin{tikzcd}[column sep=30, row sep=25,ampersand replacement=\&]
			E_1 \oplus \bigoplus_{j=1}^t F_{j,2} \arrow[r, "{\begin{psmallmatrix} e_1 & h_2 \\ 0 & -f_2 \end{psmallmatrix}}"] \arrow[d,"{\begin{psmallmatrix} a & b \\ c & d \end{psmallmatrix}}"] \& E_2 \oplus \bigoplus_{j=1}^t F_{j,3}  \arrow[d, "\phi"]
			\\
			M_1 \oplus N_1 \arrow[r,"{\begin{psmallmatrix} \partial_2 & 0 \\ 0 & 1 \\ 0 & 0 \end{psmallmatrix}}"] \& M_2 \oplus N_1 \oplus N_2,
		\end{tikzcd}
		\]
		where the vertical maps are isomorphisms. Since $e_1 \in \Rad_\ac(E_1, E_2)$, the morphism
		\begin{align*}
			\phi \circ \begin{pmatrix} e_1
				\\ 0 \end{pmatrix}\colon E_1 \rightarrow M_2 \oplus N_1 \oplus N_2
		\end{align*}
		also lies in the radical, and thus so does
		\begin{align*}
			\phi \circ \begin{pmatrix} e_1 
				\\ 0 \end{pmatrix} =\begin{pmatrix} \partial_2 & 0 \\ 0 & 1 \\ 0 & 0 \end{pmatrix}  \begin{pmatrix} a \\ c \end{pmatrix}=  \begin{pmatrix} \partial_2\circ a \\ c \\ 0 \end{pmatrix}.  
		\end{align*}
		This shows that $c \in \Rad_\ac(E_1, N_1
		)$. Now let $ \begin{psmallmatrix} \alpha & \beta \\ \gamma & \delta \end{psmallmatrix}$ denote the inverse of $\begin{psmallmatrix} a & b \\ c & d \end{psmallmatrix}$. We have $\alpha a + \beta c =1_{E_1}$, or equivalently $\alpha a = 1_{E_1} - \beta c$. It follows from the definition of the radical that this is an isomorphism, as $c \in \Rad_\ac(E_1, N_1
		)$. This implies that $E_1$ is a direct summand of $M_1$. But $M_1 \in \uc$ since $\uc$ is closed under minimal $d$-quotients, and hence $E_1$ also lies in $\uc$. It then follows from \cref{lem:second term} that $E_2, \dots, E_d \in \uc $, so we can conclude that $\uc$ is closed under $d$-extensions as required.
	\end{proof}
	
	\section{The lattice of \texorpdfstring{$d$}{d}-torsion classes}\label{sec:lattice}
	
	The torsion classes in $\ac$ form a complete lattice with meet given by intersection, see e.g.\ \cite[Proposition 2.3]{IyamaReitenThomasTodorov}.
	In this section, we use the characterisation of higher torsion classes given in \cref{cor: characterisation} to show that an analogous statement holds for higher torsion classes.
	
	Let us first recall some relevant definitions.
	
	\begin{definition}
		Let $P$ be a poset. For an arbitrary subset $H \subseteq P$, the \textit{join} of $H$, if it exists, is the least upper bound of $H$. Dually, the \textit{meet} of $H$, if it exists, is the greatest lower bound of $H$. The poset $P$ is a \textit{complete lattice} if for any subset $H\subseteq P$, the join and the meet of $H$ exist.
	\end{definition}
	
	For the sake of clarity, note that a least upper bound is unique as it is smaller than any other upper bound, and similarly for greatest lower bounds. Note also that a complete lattice is bounded, i.e.\ it has a minimum and a maximum, obtained by letting $H$ in the definition be empty. The following lemma is well-known, see e.g.\ \cite[Chapter I, Lemma 34]{Graetzer}.
	
	\begin{lemma} \label{lem:lattice}
		Let $P$ be a poset. If every subset of $P$ admits a meet or if every subset of $P$ admits a join, then $P$ is a complete lattice.
	\end{lemma}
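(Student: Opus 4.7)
The plan is to prove one direction and invoke duality for the other. Assume that every subset of $P$ admits a meet; the goal is to show that every subset $H\subseteq P$ then also admits a join. The main idea is the standard trick: the join of $H$ should be the meet of the set of upper bounds of $H$.

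More precisely, given $H\subseteq P$, I would let $U = \{ p\in P \mid h\leq p \text{ for all } h\in H\}$ denote the set of upper bounds of $H$. By hypothesis, $U$ admits a meet; call it $m$. The first verification is that $m$ is itself an upper bound of $H$: for any $h\in H$, the element $h$ is a lower bound of $U$ (by definition of $U$), and since $m$ is the \emph{greatest} lower bound of $U$, this gives $h\leq m$. Hence $m\in U$. The second verification is that $m$ is the least upper bound: if $p$ is any upper bound of $H$, then $p\in U$, and since $m$ is a lower bound of $U$, we get $m\leq p$. Thus $m$ is the join of $H$, so $P$ is a complete lattice.

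For the other direction, the argument is formally dual: if every subset of $P$ admits a join, then for any subset $H$ one takes the join of the set of lower bounds of $H$, and the same argument (with inequalities reversed) shows that this is the meet of $H$. Alternatively, one can simply apply the first case to the opposite poset $P^{\mathrm{op}}$, in which meets and joins are swapped. There is no substantial obstacle here — the only point requiring care is to keep straight that ``greatest lower bound of the upper bounds'' really does land inside the set of upper bounds, which is exactly what the first verification ensures.
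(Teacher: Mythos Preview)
Your argument is correct and is exactly the standard proof of this classical fact. The paper itself does not give a proof at all but simply refers to \cite[Lemma I.3.34]{Graetzer}, so your write-up is in fact more detailed than what appears in the paper.
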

	
	For a $d$-cluster tilting subcategory $\mc$ of $\ac$, we let $\dtor(\mc)$ denote the poset of $d$-torsion classes in $\mc$ ordered by inclusion. 
	
	\begin{theorem}
		\label{theorem:lattice}
		Let $\mc$ be a $d$-cluster tilting subcategory of $\ac$. Then $\dtor(\mc)$ is a complete lattice with meet given by intersection.
	\end{theorem}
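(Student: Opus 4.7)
The plan is to invoke \cref{lem:lattice}, which reduces the statement to showing that every family of $d$-torsion classes in $\mc$ admits a meet. For a family $\{\uc_i\}_{i\in I}$ of $d$-torsion classes, the natural candidate is $\uc \colonequals \bigcap_{i\in I}\uc_i$; once $\uc$ is known to be a $d$-torsion class, it is formal that $\uc$ is the greatest lower bound of the family. Noting that $\mc\in \dtor(\mc)$ ensures that the family of $d$-torsion classes containing a given subcategory is non-empty, so joins will then be obtained automatically from meets. The task therefore collapses to verifying that $\uc\in\dtor(\mc)$.

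By \cref{cor: characterisation}, this amounts to showing that $\uc$ is closed under $d$-extensions and $d$-quotients. The key idea is to pass through the distinguished minimal representatives provided by \cref{prop: minimal}. For $d$-quotients, given a morphism $f\colon M\to U$ with $U\in \uc$, I would form the minimal $d$-cokernel of $f$. For each $i\in I$, closure of $\uc_i$ under $d$-quotients supplies some $d$-cokernel of $f$ whose terms all lie in $\uc_i$, and \cref{prop: minimal} realises the minimal $d$-cokernel as a direct summand of it. Since $\uc_i$ is closed under direct summands by \cref{rem: closed under summands}, every term of the minimal $d$-cokernel lies in each $\uc_i$, and hence in $\uc$. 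An entirely analogous argument using the second part of \cref{prop: minimal} handles closure under $d$-extensions: given a $d$-extension whose end terms lie in $\uc$, choose the equivalent minimal $d$-extension, which is a direct summand of any equivalent $d$-extension with all terms in a given $\uc_i$, and conclude term-by-term membership in $\uc$.

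The main obstacle that this approach overcomes is a mismatch-of-witnesses issue: a priori, the $d$-cokernels (respectively, equivalent $d$-extensions) whose terms lie in $\uc_i$ may vary with $i$, so one cannot directly produce a single $d$-cokernel or $d$-extension witnessing closure for $\uc$ itself. The minimality machinery of \cref{prop: minimal} resolves this, as it supplies a canonical representative that embeds as a direct summand in all the different witnesses simultaneously; together with summand-closure of each $\uc_i$, this forces the canonical representative into the intersection. Once this observation is in place, the remainder of the argument is formal.
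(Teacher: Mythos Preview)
Your proposal is correct and follows essentially the same approach as the paper: reduce via \cref{lem:lattice} to showing intersections of $d$-torsion classes are $d$-torsion classes, then verify closure under $d$-extensions and $d$-quotients using \cref{cor: characterisation} and the minimality machinery. The paper compresses your summand argument into a single citation of \cref{rem: only look at minimal}, but the underlying idea---that the minimal representative is a summand of each $\uc_i$'s witness and hence lies in every $\uc_i$---is exactly what you have spelled out.
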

	
	\begin{proof} 
		To show that $\dtor(\mc)$ is a complete lattice, it suffices to show that it has arbitrary meets by \cref{lem:lattice}. We note that if $\dtor(\mc)$ is closed under arbitrary intersections, then meets are given by intersections, and closure under arbitrary meets follows. Therefore, we only need to show that for any subset $S$ of $\dtor(\mc)$, the intersection 
		\[
		\vc \colonequals \bigcap\limits_{\uc \in S} \uc
		\]
		is a $d$-torsion class.
		By \cref{cor: characterisation} and \cref{Lemma:ClosureMinimality}, it is enough to show that $\vc$ is closed under minimal $d$-extensions and minimal $d$-quotients. This follows from the fact that each $\uc\in S$ is closed under $d$-extensions and $d$-quotients by \cref{cor: characterisation}.
	\end{proof}
	
	We illustrate the lattice structure on the set of higher torsion classes in our running example.
	
	\begin{example}\label{ex:running4}
		In the setting of \cref{ex:running}, the set of all $2$-torsion classes in $\mc$ is listed in \cref{tab:intersection}. By \cref{theorem:lattice}, we know that the poset of all $2$-torsion classes in $\mc$ ordered by inclusion forms a complete lattice. We include the Hasse diagram in \cref{fig:latticeA3}.
		
		\begin{figure}[ht]
			\centering
			\[\begin{tikzcd}[column sep=1em, row sep=1em]
				& \mc & \\
				& &  \\
				& & \add \left\{ \rep{2\\3} \oplus \rep{1\\2} \oplus \rep{1}\right\} \\
				\\
				\add \left\{ \rep{3}\right\} & & \add \left\{\rep{1\\2} \oplus \rep{1}\right\} \\
				\\
				& & \add \left\{\rep{1}\right\} \\
				\\
				& \{0\} &
				\arrow[from=1-2, to=5-1]
				\arrow[from=1-2, to=3-3]
				\arrow[from=3-3, to=5-3]
				\arrow[from=5-3, to=7-3]
				\arrow[from=5-1, to=9-2]
				\arrow[from=7-3, to=9-2]
			\end{tikzcd}\]
			\caption{The Hasse diagram of the lattice of $2$-torsion classes in \cref{ex:running4}.}
			\label{fig:latticeA3}
		\end{figure}
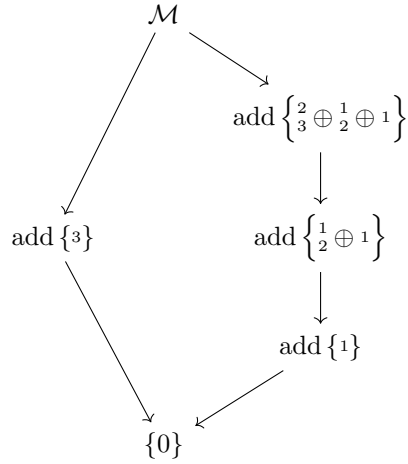
		
	\end{example}
	
	Another example, namely the complete lattice of $3$-torsion classes for the higher Auslander algebra $A_3^3$, can be found in \cref{Example:A33}. This example will demonstrate that, unlike in the classical setting, the lattice of $d$-torsion classes may not be Hasse-regular or semi-distributive (see e.g.\ \cite{DIRRT} for the definitions).
	
	It follows from \cref{thm:torsion-to-dtorsion} that there is an injective, order-preserving map 
	\begin{equation}\mathrm{T}(-):\dtor(\mc) \to \tor(\ac)  \label{non map of lattices}
	\end{equation}
	which takes a $d$-torsion class $\uc \subseteq \mc$ to the smallest torsion class in $\ac$ containing $\uc$ (see \cite[Corollary 3.3]{AJST}). We now give an example that demonstrates that this map is not a morphism of lattices.
	
	\begin{example}\label{ex:running5}
		Continuing with \cref{ex:running4}, we see that the $2$-torsion classes $\add\{3\}$ and $\add\{1\}$ are both torsion classes in $\ac$, and thus are sent to themselves under the map \eqref{non map of lattices}. 
		However, \cref{fig:latticeA3} shows that the join in $2$-$\tor(\mc)$ is $\mc$, which is sent to $\ac$ under \eqref{non map of lattices}, while the join in $\tor(\ac)$ is simply $\add\{3 \oplus 1\}$, which is contained in $\mc$ but it is not a $2$-torsion class.
	\end{example}
	
	\begin{remark}
		Note that for $\uc\in \dtor(\mc)$, the set
		\[
		\tor_{\,\uc}(\ac)\colonequals\{\tc\in \tor(\ac)\mid \tc\cap \mc=\uc\}
		\]
		may contain more than one element, is convex, and has a minimal element. Indeed, for the $2$-torsion class $\uc\colonequals\add \{\rep{1\\2} \oplus 1\}$ in \cref{ex:running5}, we have
		\[
		\add \{\rep{1\\2} \oplus 1\}\in \tor_{\,\uc}(\ac) \quad \text{and} \quad \add \{2 \oplus \rep{1\\2} \oplus 1\}\in \tor_{\,\uc} (\ac), 
		\]
		and hence $\tor_{\,\uc}(\ac)$ has more than one element. Also, for $\uc\in \dtor(\mc)$, if we have inclusions $\tc_1\subseteq \tc\subseteq \tc_2$ of torsion classes such that $\tc_1,\tc_2\in \tor_{\,\uc}(\ac)$, then
		$$\uc = \tc_1 \cap \mc \subseteq \tc \cap \mc \subseteq \tc_2 \cap \mc =\uc.$$  This shows that $\tc\in \tor_{\,\uc}(\ac)$, and hence $\tor_{\,\uc} (\ac)$ is convex.  
		Finally, $T(\uc)$ is the minimal element in $\tor_{\,\uc}(\ac)$ by \cref{thm:torsion-to-dtorsion}, where $\mathrm{T}(-)$ is as in \eqref{non map of lattices}. 
	\end{remark}

	We finish by investigating when the intersection with a $d$-cluster tilting subcategory gives a map of posets. 
	The result will be applied in the context of higher Nakayama algebras in \cref{sec:Higher NA}.
	
	\begin{proposition}\label{prop:IntersectionMapOfPoset}
		Let $\ac_1$ and $\ac_2$ be abelian categories of finite length, and let $\mc_1\subseteq \ac_1$ and $\mc_2\subseteq \ac_2$ be  $d$-cluster tilting subcategories. Assume we have an exact inclusion $\ac_2 \subseteq \ac_1$ such that $\mc_2\subseteq \mc_1$. The following statements hold:
		\begin{enumerate}
			\item\label{prop:IntersectionMapOfPoset:1} If $\uc$ is a $d$-torsion class in $\mc_1$, then $\uc\cap \mc_2$ is a $d$-torsion class in $\mc_2$.
			\item\label{prop:IntersectionMapOfPoset:2} Intersecting with $\mc_2$ gives a map of posets
			\[
			\dtor(\mc_1)\to \dtor(\mc_2)
			\]
			which preserves meets.
			\item\label{prop:IntersectionMapOfPoset:3} 
			If $\ac_2$ is closed under quotients in $\ac_1$, then $\mc_2$ is closed under $d$-quotients in $\mc_1$.
		\end{enumerate}
	\end{proposition}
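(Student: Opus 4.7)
The overall strategy is to use \cref{cor: characterisation} to reduce each part to closure properties, with part~(3) supplying a technical approximation lemma that also powers part~(1); part~(2) then follows formally.

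For part~(3), my plan is to take $f\colon M\to U$ in $\mc_1$ with $U\in\mc_2$ and build the minimal $d$-cokernel of $f$ in $\mc_1$ step by step via \cref{Example:MinimaldCokernel}, showing inductively that every term produced lies in $\mc_2$. Since $U\in\ac_2$ and $\ac_2$ is closed under quotients in $\ac_1$, one has $\Coker f\in\ac_2$, and similarly at every subsequent stage of the construction. The key lemma I would establish is: for any $C\in\ac_2$, the minimal left $\mc_1$-approximation $g_1\colon C\to W$ of $C$ lies in $\mc_2$. To prove it I would compare it with the minimal left $\mc_2$-approximation $g_2\colon C\to V$, which exists since $\mc_2$ is covariantly finite in $\ac_2$; both are monomorphisms by the respective cogenerating properties, and $V\in\mc_2\subseteq\mc_1$ gives a factorisation $g_2=\phi g_1$. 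Combining closure of $\ac_2$ under quotients with the $\Ext$-characterisation of $\mc_2$ inside $\ac_2$ and the $\Ext$-vanishing enjoyed by $W$ as an $\mc_1$-approximation, I expect to force $W\in\mc_2$, at which point $W\cong V$ by minimality. Iterating \cref{Example:MinimaldCokernel} then yields a minimal $d$-cokernel entirely in $\mc_2$.

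For part~(1), the plan is to apply \cref{cor: characterisation} and verify closure of $\uc\cap\mc_2$ under $d$-extensions and $d$-quotients in $\mc_2$. Closure under $d$-extensions is nearly immediate: a minimal $d$-extension in $\mc_2$ is an exact sequence in $\ac_2$, hence in $\ac_1$ by exactness of the inclusion, with all terms in $\mc_2\subseteq\mc_1$, so it is also a $d$-extension in $\mc_1$ to which closure of $\uc$ applies, and the middle terms are automatically in $\mc_2$. For closure under $d$-quotients, given $f\colon M\to U$ in $\mc_2$ with $U\in\uc\cap\mc_2$, the closure of $\uc$ in $\mc_1$ together with \cref{prop: minimal} and \cref{rem: closed under summands} produces a minimal $d$-cokernel of $f$ in $\mc_1$ with all terms in $\uc$. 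To upgrade this to a $d$-cokernel in $\mc_2$, observe that at each step of \cref{Example:MinimaldCokernel} the cokernel in question lies in $\ac_2$ (by exactness of the inclusion, since source and target are both in $\mc_2\subseteq\ac_2$, so closure under quotients is not needed here) and in the torsion class $\tc_1\subseteq\ac_1$ corresponding to $\uc$ via \cref{thm:torsion-to-dtorsion}, using closure of $\tc_1$ under quotients and the inductive hypothesis. \cref{cor: minimal M-appr. in U} then places the minimal left $\mc_1$-approximation of this cokernel in $\uc$, and a variant of the key lemma from~(3) identifies it with the minimal left $\mc_2$-approximation, putting each term in $\uc\cap\mc_2$. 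The main obstacle across both parts is exactly this identification of left approximations on $\ac_2$-objects, where the interplay between the two $d$-cluster tilting structures is most delicate.

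Part~(2) is formal once (1) is known: the map $\uc\mapsto\uc\cap\mc_2$ is well-defined by~(1), order-preservation is immediate from set inclusion, and meet-preservation follows because meets in both $\dtor(\mc_1)$ and $\dtor(\mc_2)$ are given by intersection via \cref{theorem:lattice}, so $\bigcap_i (\uc_i\cap\mc_2)=(\bigcap_i \uc_i)\cap\mc_2$.
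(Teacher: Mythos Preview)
Your key lemma for part~(3) --- that the minimal left $\mc_1$-approximation of any $C\in\ac_2$ already lies in $\mc_2$ --- is not justified, and the sketch ``closure under quotients $+$ $\Ext$-characterisation $+$ $\Ext$-vanishing'' does not obviously go through: the $\Ext$-characterisation of $\mc_2$ only applies to objects of $\ac_2$, and you have given no reason why the approximation target $W$ lies in $\ac_2$ at all. The paper avoids this entirely. Since $C=\Coker f\in\ac_2$ and $\mc_2$ is $d$-cluster tilting in $\ac_2$, \cite[Proposition~3.17]{Jasso} directly produces an exact sequence $0\to C\to N_1\to\cdots\to N_d\to 0$ with each $N_i\in\mc_2$; as $\mc_2\subseteq\mc_1$ and the inclusion is exact, this is a $d$-cokernel of $f$ in $\mc_1$, finishing~(3) without any comparison of approximations.

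For part~(1) your argument is both overcomplicated and has a genuine gap: you invoke ``a variant of the key lemma from~(3)'' to place each approximation in $\mc_2$, but part~(1) does \emph{not} assume $\ac_2$ is closed under quotients, so even a correct key lemma would not apply. The paper's route is one line: since the inclusion $\ac_2\subseteq\ac_1$ is exact, any $d$-cokernel (resp.\ $d$-extension) in $\mc_2$ is automatically one in $\mc_1$, so closure of $\uc$ under $d$-quotients and $d$-extensions in $\mc_1$ immediately restricts to closure of $\uc\cap\mc_2$ in $\mc_2$. Concretely, the minimal $d$-cokernel of $f$ computed in $\mc_2$ is a $d$-cokernel in $\mc_1$ satisfying the same radical condition (the radical agrees on $\ac_2$ since the inclusion is full), hence it \emph{is} the minimal $d$-cokernel in $\mc_1$, which has terms in $\uc$ by hypothesis. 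No appeal to \cref{thm:torsion-to-dtorsion}, \cref{cor: minimal M-appr. in U}, or any approximation-comparison lemma is needed. Your argument for part~(2) is fine and matches the paper.
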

	
	\begin{proof}
		Assume that $\uc$ is a $d$-torsion class in $\mc_1$. Since the inclusion $\ac_2\subseteq\ac_1$ is exact, it must send $d$-cokernels and $d$-kernels in $\mc_2$ to $d$-cokernels and $d$-kernels in $\mc_1$, respectively. In particular, it preserves $d$-quotients and $d$-extensions. Therefore, the subcategory $\uc\cap \mc_2$ must be closed under $d$-extensions and $d$-quotients in $\mc_2$, since $\uc$ is closed under $d$-extensions and $d$-quotients in $\mc_1$. This proves \eqref{prop:IntersectionMapOfPoset:1}. 
		
		Part \eqref{prop:IntersectionMapOfPoset:2} follows from \eqref{prop:IntersectionMapOfPoset:1} and the fact that meets in $\dtor(\mc_1)$ and $\dtor(\mc_2)$ are given by intersection. 
		
		For part \eqref{prop:IntersectionMapOfPoset:3}, note that giving a $d$-quotient in $\mc_1$ of an object $Y\in \mc_2$ is the same as giving a $d$-cokernel in $\mc_1$ of a morphism $X\to Y$ with $X\in \mc_1$. This is again equivalent to giving an exact sequence
		\[
		0\to C\to M_1\to M_2\to \cdots \to M_d\to 0
		\]
		where $C$ is the cokernel of $X\to Y$ and each $M_i$ is in $\mc_1$. Now since $\ac_2$ is closed under quotients in $\ac_1$, the cokernel $C$ must be contained in $\ac_2$. As $\mc_2$ is $d$-cluster tilting in $\ac_2$, we can construct an exact sequence
		\[
		0\to C\to N_1\to N_2\to \cdots \to N_d\to 0
		\]
		where each $N_i$ is in $\mc_2$, see \cite[Proposition 3.17]{Jasso}. 
		Since $\mc_2\subseteq \mc_1$, this must give a $d$-cokernel of $X\to Y$ in $\mc_1$ by the observation above. This proves the claim. 
	\end{proof}
	
	\begin{remark}\label{rem: d-quotients inherited}
		Note that if $\ac_2$ is closed under quotients in $\ac_1$, it follows from \cref{prop:IntersectionMapOfPoset}\eqref{prop:IntersectionMapOfPoset:3} that any minimal $d$-cokernel in $\mc_1$ of a morphism $X \to Y$ with $Y \in \mc_2$ is a minimal $d$-cokernel of a morphism in $\mc_2$.
		In particular, a subcategory of $\mc_2$ is closed under $d$-quotients in $\mc_2$ if and only if it is closed under $d$-quotients in $\mc_1$.
	\end{remark}
	
	
	\section{Higher Auslander algebras }\label{sec: higher AA}
	In this section, we apply \cref{cor: characterisation} to classify and count the $d$-torsion classes associated to higher Auslander algebras of type $\mathbb A$. Higher Auslander algebras were introduced in \cite{Iyama2011} and constitute an important class of algebras in higher homological algebra. The module category of each such algebra contains a $d$-cluster tilting subcategory, which was described combinatorially in \cite{Iyama2011} and \cite{OppermannThomas}.
	
	Recall from \cref{cor: characterisation} that $d$-torsion classes in a $d$-cluster tilting subcategory are precisely the subcategories which are closed under $d$-extensions and $d$-quotients. In \cref{Background on higher Auslander algebras,sec:closure d-quotients} we present results on closure under $d$-extensions and $d$-quotients for higher Auslander algebras of type $\mathbb A$, culiminating in a combinatorial characterisation of their higher torsion classes in \cref{thm:higherAuslanderalg}. In \cref{sec:computational results} we employ our results to write an algorithm which computes and counts all these $d$-torsion classes.
	
	\subsection{Background on higher Auslander algebras}\label{Background on higher Auslander algebras} 
	
	We start by providing a brief introduction to higher Auslander algebras, highlighting combinatorial descriptions which will be important throughout \cref{sec: higher AA}. We mostly follow the notation and terminology from \cite{Higher Nakayama}. 
	
	For positive integers $n$ and $d$, let $N_n=\{0,1, \dots, n-1\}$ with the natural poset structure. Consider the set
	\[
	N_{n}^d = \underbrace{N_n\times\cdots\times N_n}_{d\text{ times}}
	\]
	of $d$-tuples $x=(x_0,\dots,x_{d-1})$ over $N_n$. We endow $N_n^d$
	with the product order, meaning that $x\leq y$ in $N_{n}^d$ if and only if $x_i\leq y_i$ for all $i=0, 1, \ldots, d-1$. We consider $N_n^d$ as a category whose objects are the elements of  $N_n^d$, and whose morphisms are given by the poset relations of  $N_n^d$. Taking the $\kk$-linearisation of this category, we get a finite-dimensional $\kk$-algebra, see \cite[Section 1.2]{Higher Nakayama} for more details. By abuse of notation, we also denote this algebra by $N_n^d$.
	
	Let $\os_n^d$ be the subset of $N_n^d$ of non-decreasing $d$-tuples over $N_{n}$.  In particular, an element of $\os_n^d$ is a tuple $x=(x_0,\dots,x_{d-1})$ with $x_{0}\leq x_{1}\leq\dots\leq x_{d-1}$. The \textit{higher Auslander algebra} $A_n^{d}$ is defined as the idempotent quotient
	\[
	A_n^{d} \colonequals N_n^d/(N_n^d\setminus \os_n^d),
	\]
	where we consider $N_n^d$ as a finite-dimensional $\kk$-algebra as above. Note that $A_n^{d}$ is equivalently given by the opposite of
	a quiver $Q^{n,d}$ whose vertices are the elements of the set $\os_n^d$, and where there is an arrow from vertex $x$ to vertex $y$ if we have $y_i=x_i+1$ for exactly one $0\leq i\leq d-1$ and $y_j=x_j$ for $j\neq i$. The relations of $A_n^{d}$ are given by an admissible ideal $I_{n,d}$ making squares commutative and sending certain compositions of two arrows to zero, see \cite{HerschendJorgensen}. 
	
	\begin{remark}\label{rem:different notation}
		The notation we use is similar to that in \cite{Higher Nakayama}. It relates to the notation in \cite{HerschendJorgensen} in the following way: What we call $Q^{n,d}$, $A_n^d$ and $I_{n,d}$ corresponds to what is denoted by $Q^{n,d-1}$, $A_n^{d-1}$ and $I_{n,d-1}$ in \cite{HerschendJorgensen}. To see this, note that the poset $\os_n^d$ is isomorphic to the poset $\vc_{n,d-1}$ of \textit{increasing} $d$-tuples $x'=(x'_0,\dots,x'_{d-1})$ over $\{1,2,\dots ,n+d-1\}$ used in \cite{HerschendJorgensen}. The isomorphism is given by
		\begin{align*}
			\os_n^d\to \vc_{n,d-1} \quad (x_0,\dots ,x_{d-1})\mapsto (x_0+1,x_1+2,\dots, x_{d-1}+d). 
		\end{align*}
	\end{remark}
	
	The module category of $A_n^d$ has a $d$-cluster tilting subcategory
	\[
	\mc^d_{n} \colonequals \add(M^d_{n}) \subseteq \mod A^d_n
	\]
	where $M^d_{n}=\bigoplus_{x\in \os_n^{d+1}} M_x$. Here, the notation $M_x$ is used for the indecomposable $A_n^d$-module with support in all vertices $y\in \os_n^d$ such that $x_{0}\leq y_{0} \leq x_{1} \leq \cdots \leq x_{d-1} \leq y_{d-1} \leq x_{d} $. Note that the $d$-cluster tilting subcategory $\mc_n^d$ contains finitely many indecomposable objects, indexed by $\os_n^{d+1}$. It is known that $\End_{A_n^d}(M^d_{n})$ and $A^{d+1}_{n}$ are isomorphic as algebras by \cite[Corollary 1.16]{Iyama2011}, see also \cite[Theorem 2.3]{Higher Nakayama}.
	For examples of the quivers $Q^{n,d}$ and relevant modules, see \cite[Section 2.1]{Higher Nakayama} and \cite[Example 2.13]{HerschendJorgensen}. 
	
	Next we define the relation
	\begin{align*}
		x\rightsquigarrow y &\text{ if and only if } x_{0}\leq y_{0} \leq x_{1}\leq y_{1} \leq \cdots  \leq  x_{d}\leq y_{d}
	\end{align*}
	on the set $\mathbb{Z}^{d+1}$ of all $(d+1)$-tuples over $\mathbb{Z}$. Note that $x\rightsquigarrow y$ implies $x\leq y$. Using the relation $\rightsquigarrow$, one can determine the $\Hom$-spaces between indecomposable modules in $\mc^d_{n}$.
	
	\begin{proposition}{\cite[Theorem 3.6(3)]{OppermannThomas}, \cite[Proposition 2.8]{Higher Nakayama}}  \label{prop: morphism}
		Let $x,y\in \os_n^{d+1}$. Then \[\dim \Hom_{A_n^d}(M_x,M_y)=\begin{cases}
			1 & \text{if } x\rightsquigarrow y \\ 0 &\text{otherwise}.
		\end{cases}\]
	\end{proposition}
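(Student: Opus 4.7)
The plan is to deduce this combinatorial formula from the explicit description of $M_x$ as a module whose support is the order-interval
\[
S(x) = \{z \in \os_n^d \mid x_0 \leq z_0 \leq x_1 \leq z_1 \leq \cdots \leq x_{d-1} \leq z_{d-1} \leq x_d\},
\]
with a one-dimensional stalk at each $z \in S(x)$. Any homomorphism $f \colon M_x \to M_y$ is therefore determined, stalk by stalk, by scalars $\lambda_z \in \kk$ at the vertices $z \in S(x) \cap S(y)$, subject to the compatibility relations coming from the arrows of $Q^{n,d}$ and the admissible ideal $I_{n,d}$.

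First I would pinpoint the distinguished extremal vertex of $S(x)$ at which the cyclic generator of $M_x$ lives, and verify that the commutativity and zero relations in $I_{n,d}$ propagate the scalar at this top vertex uniquely across all of $S(x) \cap S(y)$. This already establishes the bound $\dim \Hom_{A_n^d}(M_x, M_y) \leq 1$. Next I would determine precisely when a nonzero scalar at the top can be extended consistently: the key requirement is that every path out of a vertex of $S(x) \cap S(y)$ which exits $S(x)$ must also exit $S(y)$, since otherwise compatibility would force $\lambda = 0$ throughout. Translating this exit-containment into coordinatewise inequalities between the components of $x$ and $y$ should produce exactly the chain
\[
x_0 \leq y_0 \leq x_1 \leq y_1 \leq \cdots \leq x_d \leq y_d,
\]
which is the relation $x \rightsquigarrow y$. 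Conversely, when this interleaving holds, the consistency constraints collapse and any choice of top scalar extends, giving a one-dimensional Hom space.

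The main obstacle I expect is the careful bookkeeping of conventions: since $A_n^d$ is defined as the opposite of the quiver algebra of $Q^{n,d}$ and we work with right modules, identifying the correct extremal vertex as the top of $M_x$ and orienting the exit conditions accordingly requires genuine care, and misaligning them would produce the reversed relation. A more mechanical alternative, bypassing this, is to compute $\Hom(M_x, M_y)$ via projective resolutions over $A_n^d$: the $\Hom$-spaces $\Hom(e_z A_n^d, e_{z'} A_n^d) \cong e_{z'} A_n^d e_z$ are direct path-counts modulo $I_{n,d}$, and because each $M_x$ admits an explicit projective presentation indexed by extremal vertices of $S(x)$, the claim can then be extracted from the associated long exact sequence, with $x \rightsquigarrow y$ emerging as the combinatorial survival condition.
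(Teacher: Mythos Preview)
The paper does not give its own proof of this proposition: it is stated with citations to \cite[Theorem 3.6(3)]{OppermannThomas} and \cite[Proposition 2.8]{Higher Nakayama} and used as a black box. So there is no argument in the paper to compare against; the authors simply import the result from the literature.

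Your outline is a reasonable sketch of a direct proof and is close in spirit to how the cited references establish it. The modules $M_x$ are interval modules (one-dimensional at each vertex of $S(x)$, with all nonzero structure maps identities), so a morphism $M_x \to M_y$ is indeed a family of scalars on $S(x)\cap S(y)$ constrained by naturality. The bound $\dim\Hom \le 1$ follows because $M_x$ is generated by its value at the extremal vertex $(x_0,x_1,\dots,x_{d-1})$ (the projective top in the right-module convention), and the exit-containment condition you describe does unwind to the interleaving $x \rightsquigarrow y$. Your caution about the opposite-quiver and right-module conventions is well placed: getting the direction of $\rightsquigarrow$ right is exactly the point where a careless write-up goes wrong. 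The projective-resolution alternative you mention is also viable but heavier than needed here, since the direct stalkwise argument already suffices once the generator is correctly identified.
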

	
	The $d$-extensions in $\mc^d_{n}$ have a similar combinatorial description, including a description of all the middle terms. To this end, we define $\tau_d\colon \mathbb{Z}^{d+1}\to \mathbb{Z}^{d+1}$ by
	\begin{align*}
		\tau_d(x_0,\dots,x_{d})=(x_0-1,x_1-1,\dots, x_{d}-1).
	\end{align*}
	The notation is motivated by the fact that if $x\in \os_n^{d+1}$ with $x_0>0$, then 
	\(
	\tau_d(M_x)\cong M_{\tau_d(x)}
	\)
	by \cite[Proposition 2.7(iii)]{Higher Nakayama}, where $\tau_d (M_x)$ 
	is the higher Auslander--Reiten translate of $M_x$.
	\begin{proposition}{\cite[Theorem 3.6(4) and 3.8]{OppermannThomas}, \cite[Proposition 2.8]{Higher Nakayama}} \label{prop: ext descrip}
		Let $x,y\in \os_n^{d+1}$. Then  
		\[\dim \Ext^{d}_{A_n^d}(M_y,M_x)= \begin{cases}
			1 & \text{if } x \rightsquigarrow\tau_d(y) \\
			0 &\text{otherwise}.
		\end{cases}\]
		In particular, if $x\rightsquigarrow \tau_d(y)$, there is a non-trivial $d$-extension  \begin{align}\label{d-extension}
			0\rightarrow M_x \rightarrow E_1\rightarrow \cdots \rightarrow E_d \rightarrow M_y \rightarrow 0
		\end{align}
		where $E_k=\bigoplus_{z\in Z_k}M_z$ for 
		\[Z_k=\{z\in \os_n^{d+1} \mid z_i\in\{x_i,y_i\} \text{ for each } i \text{ and } | \{i \mid z_i=y_i \}|=k \}.\]  
	\end{proposition}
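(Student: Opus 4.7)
The plan is to establish the two claims separately: first the dimension formula for $\Ext^d$, then the explicit description of the middle terms of a non-trivial $d$-extension representing a generator of that $\Ext^d$.

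For the dimension, I would invoke the higher Auslander--Reiten duality available for the $d$-cluster tilting subcategory $\mc_n^d$, which gives a natural isomorphism $\Ext^d_{A_n^d}(M_y, M_x) \cong D\Hom_{A_n^d}(M_x, \tau_d M_y)$ whenever $M_y$ has no projective summand in $\mc_n^d$. Combining this with the identification $\tau_d M_y \cong M_{\tau_d(y)}$ (valid when $y_0 \geq 1$) and the Hom-dimension formula of \cref{prop: morphism} immediately yields that the right-hand side is one-dimensional exactly when $x \rightsquigarrow \tau_d(y)$ and zero otherwise. The boundary case $y_0 = 0$ must be handled separately: there $\Ext^d(M_y, M_x) = 0$, and simultaneously $\tau_d(y)_0 = -1$ prevents any $x \in \os_n^{d+1}$ from satisfying $x \rightsquigarrow \tau_d(y)$.

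For the middle terms, assume $x \rightsquigarrow \tau_d(y)$. I would write down a candidate sequence combinatorially indexed by the Boolean lattice: for each $S \subseteq \{0, 1, \ldots, d\}$ set $z^S_i = y_i$ if $i \in S$ and $z^S_i = x_i$ otherwise, so that $Z_k = \{z^S : |S| = k, \ z^S \in \os_n^{d+1}\}$ matches the statement. The differential $E_k \to E_{k+1}$ is given, on the summand $M_{z^S}$, by a signed sum of canonical maps $M_{z^S} \to M_{z^{S \cup \{i\}}}$ over $i \notin S$ with $z^{S \cup \{i\}} \in \os_n^{d+1}$, each chosen as the unique generator of the one-dimensional $\Hom$-space afforded by \cref{prop: morphism} (the relation $z^S \rightsquigarrow z^{S \cup \{i\}}$ is immediate from $x_i \leq y_i$ and $x \in \os_n^{d+1}$); a Koszul-type sign convention ensures $d_{k+1} \circ d_k = 0$.

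To verify $d$-exactness and non-triviality, I would apply $\Hom_{A_n^d}(M_w, -)$ for every $w \in \os_n^{d+1}$ and argue that the resulting complex of $\kk$-vector spaces is exact in the middle, with one-dimensional contributions in the extreme degrees only for $w \rightsquigarrow x$ and $w \rightsquigarrow y$. By the Hom-dimension formula, this complex has $\kk$ at position $k$ for each $S$ of size $k$ with $w \rightsquigarrow z^S$, reducing the question to computing reduced simplicial cohomology of the order complex $\Delta_w = \{S : w \rightsquigarrow z^S, \ z^S \in \os_n^{d+1}\}$. Since $\rightsquigarrow$ is a coordinatewise condition, the admissible choices split into independent contributions at each index $i$ (determined by how $w_i$ and $w_{i+1}$ compare to $x_i$ and $y_i$), making $\Delta_w$ generically a product of simplices and hence contractible. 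The main obstacle will be the case analysis confirming contractibility of $\Delta_w$ for every intermediate $w$ while producing the correct single copy of $\kk$ at one end when $w \in \{x, y\}$; carefully tracking which $z^S$ actually lie in $\os_n^{d+1}$ (the hypothesis $x \rightsquigarrow \tau_d(y)$ only guarantees $y_i \leq x_{i+1} + 1$, not $y_i \leq x_{i+1}$, so some $z^S$ are excluded) and identifying the resulting cohomology class as a generator of $\Ext^d(M_y,M_x)$ is where the bulk of the combinatorial work lies.
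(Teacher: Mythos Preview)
The paper does not prove this proposition at all: it is quoted directly from \cite{OppermannThomas} and \cite{Higher Nakayama}, with no argument supplied. There is therefore nothing in the paper to compare your proposal against.

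Assessed on its own merits, your outline is sound and close to how the cited sources proceed. The dimension formula via higher Auslander--Reiten duality together with \cref{prop: morphism} and the identification $\tau_d M_y \cong M_{\tau_d(y)}$ is exactly the argument of \cite[Proposition~2.8]{Higher Nakayama}, and your handling of the boundary case $y_0=0$ is correct.

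For the middle terms, two remarks may sharpen your plan. First, you only propose applying $\Hom(M_w,-)$; to conclude that the sequence is $d$-exact it is enough to show it is exact in $\operatorname{mod}A_n^d$, and for that it suffices to take $M_w$ projective. Since all indecomposable projectives occur among the $M_w$, your scheme covers this, but the logical reduction should be made explicit. Second, the combinatorics are cleaner than you suggest: once $w \rightsquigarrow z^S$, the condition $z^S \in \os_n^{d+1}$ is automatic (because $z^S_i \leq w_{i+1} \leq z^S_{i+1}$), so the index set $\{S : w \rightsquigarrow z^S\}$ genuinely factors as a product $\prod_i A_i$ with $A_i \subseteq \{0,1\}$, and the resulting Hom-complex is a tensor product of two-term complexes. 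This makes the acyclicity immediate whenever some $A_i=\{0,1\}$. Non-triviality then follows by taking $w=y$: one finds $A_i=\{1\}$ for every $i$, so $\Hom(M_y,E_k)=0$ for $k\leq d$ while $\Hom(M_y,M_y)\cong\kk$, whence the identity on $M_y$ does not lift through $E_d\to M_y$ and the Yoneda class is nonzero.
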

	
	\begin{remark}\label{rem: min d-ext}
		The $d$-extension (\ref{d-extension}) in \cref{prop: ext descrip} is minimal. This is seen by combining the fact that $E_k$ and $E_{k+1}$ have no isomorphic direct summands for $k = 1,\dots,d-1$ with \cref{prop: minimal} and \cref{rem: minimal}.
	\end{remark}
	
	We will use the following lemma, which can be seen as an immediate consequence of the description of $d$-extensions in \cref{prop: ext descrip}.
	
	\begin{lemma}  \label{prop: monomorphism}
		Suppose that $x,y\in \os_n^{d+1}$ with $x_i=y_i$ for all $i=0,\dots,d-1$ and $x_d \leq y_d$. Then any non-zero morphism $M_{x} \to M_y$ is a monomorphism.
	\end{lemma}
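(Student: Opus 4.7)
The plan is as follows. First I would observe that the hypotheses $x_i = y_i$ for $i < d$ and $x_d \leq y_d$ immediately imply $x \rightsquigarrow y$: unpacking the definition gives the chain $x_0 \leq x_0 \leq x_1 \leq x_1 \leq \cdots \leq x_{d-1} \leq x_d \leq y_d$, which holds since $x \in \os_n^{d+1}$ is non-decreasing. Hence \cref{prop: morphism} yields $\dim \Hom_{A_n^d}(M_x, M_y) = 1$, so it suffices to exhibit one non-zero monomorphism $M_x \to M_y$; any other non-zero morphism is then a non-zero scalar multiple of it, and therefore also a monomorphism.

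The case $x_d = y_d$ reduces to $x = y$ with the identity morphism, so I may assume $x_d < y_d$. The plan is then to produce the required monomorphism as the first map of a carefully chosen $d$-extension starting at $M_x$. Define $z \in \mathbb{Z}^{d+1}$ by $z_i = x_{i+1} + 1$ for $i = 0, \ldots, d-1$ and $z_d = y_d$. Direct checks show that $z$ is non-decreasing and lies in $\os_n^{d+1}$ (using $x_{d-1} \leq x_d < y_d \leq n-1$), and that $x \rightsquigarrow \tau_d(z)$ holds since $\tau_d(z) = (x_1, x_2, \ldots, x_d, y_d - 1)$ and the required chain of inequalities reduces to $x$ being non-decreasing together with $x_d < y_d$. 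By \cref{prop: ext descrip}, there is then a $d$-extension
\begin{equation*}
0 \to M_x \to E_1 \to E_2 \to \cdots \to E_d \to M_z \to 0
\end{equation*}
with $E_1 = \bigoplus_{w \in Z_1} M_w$.

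The main step is to verify that in fact $Z_1 = \{y\}$, so that $E_1 = M_y$ and the initial monomorphism of the $d$-extension furnishes the required non-zero monomorphism $M_x \to M_y$. Any $w \in Z_1$ has $w_i \in \{x_i, z_i\}$ with a unique index $j$ such that $w_j = z_j$. If $j < d$, then $w$ equals $(x_0, \ldots, x_{j-1}, x_{j+1}+1, x_{j+1}, \ldots, x_d)$, whose $j$-th entry $x_{j+1}+1$ strictly exceeds its $(j+1)$-st entry $x_{j+1}$, so $w$ fails to be non-decreasing and thus does not belong to $\os_n^{d+1}$. Hence $j = d$ necessarily, and the only candidate is $w = (x_0, \ldots, x_{d-1}, z_d) = y$, which is indeed in $\os_n^{d+1}$ since $x_{d-1} \leq x_d < y_d$. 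The main obstacle is precisely this combinatorial check that the jumps $z_i - x_{i+1} = 1$ eliminate all other summands; once settled, the result follows immediately from the $1$-dimensional Hom argument.
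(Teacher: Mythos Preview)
Your proof is correct and takes essentially the same approach as the paper: the paper also handles $x_d=y_d$ trivially and, for $x_d<y_d$, invokes \cref{prop: ext descrip} with exactly your choice $z=(x_1+1,\dots,x_d+1,y_d)$. Your write-up simply makes explicit the verification that $Z_1=\{y\}$ and the reduction via $\dim\Hom_{A_n^d}(M_x,M_y)=1$, which the paper leaves to the reader.
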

	
	\begin{proof}
		If $x_d=y_d$, the only non-zero morphism (up to multiplication by a scalar) is the identity, which is a monomorphism. If $x_d < y_d$, the  result follows from \cref{prop: ext descrip} when looking at the extension between $M_x$ and $M_z$, where $z=(x_1+1, \dots, x_d+1, y_d)$.
	\end{proof}
	
	Although \cref{prop: ext descrip} is limited to describing the middle terms in $d$-extensions with indecomposable end terms, we know from \cref{prop: extension closure indec} that this knowledge is sufficient for producing $d$-torsion classes if we have already established closure under $d$-quotients. 
	
	\subsection{A combinatorial characterisation of \texorpdfstring{$d$}{d}-torsion classes in \texorpdfstring{$\mc^{d}_{n}$}{Mnd}.}
	\label{sec:closure d-quotients}
	The main result of this subsection is a characterisation of higher torsion classes associated to higher Auslander algebras of type $\mathbb A$, see \cref{thm:higherAuslanderalg}. The key ingredient in the proof of this result is a combinatorial description of how to close a subcategory of $\mc^d_{n}$ under $d$-quotients.
	
	Given $M \in \mc^d_{n}$, we write $\dq(M)$ for the smallest subcategory of $\mc^d_{n}$ which contains $M$ and is closed under $d$-quotients (see \cref{def: dq closure}). We often refer to $\dq(M)$ as the \textit{$d$-quotient closure} of $M$. It is clear that if $N \in \dq(M)$, then $\dq(N) \subseteq \dq(M)$.
	
	We filter $\dq(M)$ as follows:
	\begin{itemize}
		\item Set $\dq(M)_0=\add(M)$.
		\item For $i \geq 0$, set 
		\[
		\dq(M)_{i+1} = \add\left\{ N \in \mc^d_{n} \ \middle\vert \begin{array}{c}
			\exists \ \text{minimal $d$-quotient} \ X \to Y \to C_1 \to \dots \to C_d \to 0 \ \text{in} \\ \mc^d_{n}
			\ \text{with} \ Y \in \dq(M)_i \ \text{and} \ N \cong C_j \ \text{for some $1\leq j \leq d$}
		\end{array}\right\}.
		\]
	\end{itemize} 
	We see that $\dq(M)_0 \subseteq \dq(M)_1 \subseteq \dots$ and that the chain must stabilise with $\dq(M)_t=\dq(M)_{t+1}$ for some $t \in \mathbb{N}$ because $\mc^d_{n}$ has finitely many indecomposables. By definition, we have \mbox{$\dq(M)=\dq(M)_t$}. 
	
	To completely determine the subcategory $\dq(M)$, it is sufficient to describe the indecomposable modules it contains. With this in mind, we begin by identifying certain indecomposables which must be contained in $d$-quotient closures.
	
	\begin{lemma} \label{lem: add 1}
		Let $x \in \os_n^{d+1}$ be such that $x_i +1 \leq x_{i+1}$ for some $0\leq i \leq d-1$ and set $y=(x_0, \dots, x_{i-1}, x_i +1, x_{i+1}, \dots, x_d)$. Then $M_{y} \in \dq(M_{x})$.
	\end{lemma}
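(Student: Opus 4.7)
The goal is to exhibit an explicit morphism $f\colon M_z\to M_x$ in $\mc_n^d$ whose minimal $d$-cokernel contains $M_y$ as a direct summand of one of its intermediate terms $M_1,\ldots,M_d$. By construction of $\dq(M_x)$, this will yield $M_y\in\dq(M_x)$.

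The natural candidate is to define $z\in\os_n^{d+1}$ by $z_{i+1}=x_i$ and $z_j=x_j$ for $j\neq i+1$, so $z$ is obtained from $x$ by collapsing the entry $x_{i+1}$ down to $x_i$. I first need to verify that $z\in\os_n^{d+1}$, which amounts to checking $z_i\leq z_{i+1}\leq z_{i+2}$; these read $x_i\leq x_i$ and $x_i\leq x_{i+2}$, both of which hold because $x$ is non-decreasing (and the second uses $i\leq d-1$, so $i+2\leq d+1$, with the case $i=d-1$ trivial). A direct check of the interleaving $z_0\leq x_0\leq z_1\leq x_1\leq \cdots \leq z_d\leq x_d$ then gives $z\rightsquigarrow x$, so \cref{prop: morphism} provides a non-zero morphism $f\colon M_z\to M_x$.

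Next, I would compute the minimal $d$-cokernel of $f$ step by step, as in \cref{Example:MinimaldCokernel}. The first step is to compute $C_1=\Coker(f)$ from the combinatorial description of the supports of $M_x$ and $M_z$, and then to identify its minimal left $\mc_n^d$-approximation $C_1\to M_1$ by determining the indecomposables $M_u\in\mc_n^d$ receiving a non-zero morphism from $C_1$ using \cref{prop: morphism}. The claim is that $M_y$ appears as a direct summand of $M_k$ for some $k\in\{1,\ldots,d\}$, and then $M_y\in\dq(M_x)_1\subseteq \dq(M_x)$.

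The main obstacle is carrying out the combinatorial identification of the iterated minimal left $\mc_n^d$-approximations, since the module structure of the intermediate cokernels $C_k$ is delicate and sensitive to the position of $i$. A workable alternative, should the direct identification prove intricate, is to split into cases based on whether $i=0$, $i=d-1$ or $0<i<d-1$, and to handle boundary cases by explicit computation; one might also argue inductively, first producing some intermediate $M_{w}\in\dq(M_x)$ and then applying the same construction to $M_w$ until $M_y$ is reached.
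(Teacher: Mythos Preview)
Your approach coincides with the paper's: pick $z \in \os_n^{d+1}$ with $z \rightsquigarrow x$, obtain a non-zero morphism $M_z \to M_x$ via \cref{prop: morphism}, and locate $M_y$ in its minimal $d$-cokernel. The paper takes $z_{i+1} = x_i + 1$ rather than your $z_{i+1} = x_i$, but either choice yields $z \rightsquigarrow x$ and a non-zero morphism.

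The substantive difference is that the paper does not compute the $d$-cokernel at all: it simply invokes \cite[Lemma~3.8(2)]{HerschendJorgensen}, which treats precisely this situation and immediately delivers $M_y \in \dq(M_x)$. Your proposal leaves the identification of $M_y$ among the $M_k$ as the acknowledged ``main obstacle'', so as written it is a plan rather than a proof. Instead of the case split or induction you suggest as fallbacks, the cleanest fix is to cite the Herschend--J{\o}rgensen lemma; if you want a self-contained argument, that lemma gives an explicit description of the minimal $d$-cokernel for morphisms of this shape, and its proof could be reproduced directly.
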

	\begin{proof}
		Define $z=(z_0, \dots, z_d)$ such that
		\[
		z_j = \begin{cases} x_j & \text{if } j \neq  i+1 \\
			x_i & \text{if } j=i+1.
		\end{cases}
		\]
		We then have $z \rightsquigarrow x$, so there is a non-zero morphism $M_{z} \to M_x$ by \cref{prop: morphism}. 
		Because $x_i\leq z_{i+1}<x_{i+1}$, the module $M_{y}$ is hence in $\dq(M_{x})$ by \cite[Lemma 3.8(2)]{HerschendJorgensen}, keeping in mind that this paper uses a different notation as we outlined in \cref{rem:different notation}.
	\end{proof}
	
	\cref{lem: add 1} yields the following corollary.
	
	\begin{corollary} \label{cor: dq inclusion 1}
		Given any $x \in \os_n^{d+1}$, the set 
		\[
		\left\{ M_{y} \in \mc_n^d \mid y \in \os_n^{d+1}, \ x \leq y \ \text{and} \ x_d=y_d \right\}
		\]
		is contained in $\dq(M_{x})$.
	\end{corollary}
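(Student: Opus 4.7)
The plan is to prove the statement by induction on the integer
\[
N(x,y) \colonequals \sum_{i=0}^{d-1}(y_i - x_i),
\]
which is a non-negative integer since $x \leq y$. The base case $N(x,y)=0$ gives $y_i=x_i$ for $i=0,\dots,d-1$, and combined with $y_d=x_d$ this forces $y=x$, so $M_y = M_x \in \dq(M_x)_0$.

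For the inductive step, assume $N(x,y)>0$ and let $i$ be the smallest index with $x_i<y_i$; necessarily $i \leq d-1$. The idea is to produce a tuple $x'$ satisfying $x\leq x'\leq y$, $x'_d=y_d$, $N(x',y)<N(x,y)$, and $M_{x'}\in\dq(M_x)$, so that the inductive hypothesis combined with $\dq(M_{x'})\subseteq \dq(M_x)$ finishes the proof. The natural candidate would be to bump the $i$-th coordinate of $x$ up by one using \cref{lem: add 1}, but this only works when $x_i+1\leq x_{i+1}$. The fix is to bump the coordinate at the largest index where $x$ is still flat at the value $x_i$.

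Concretely, let $j$ be the largest index with $x_j=x_i$. I first verify $j<d$: otherwise $x_d=x_i<y_i\leq y_d=x_d$, a contradiction. Hence $x_j<x_{j+1}$, i.e.\ $x_j+1\leq x_{j+1}$, so \cref{lem: add 1} applies and produces
\[
x' \colonequals (x_0,\dots,x_{j-1},\,x_j+1,\,x_{j+1},\dots,x_d)\in \os_n^{d+1}
\]
with $M_{x'}\in\dq(M_x)$. To run the induction, I need $x'\leq y$: the only nontrivial coordinate is the $j$-th, where $x_j+1\leq y_j$ holds because $x_j=x_i<y_i\leq y_j$. Since $j<d$, we also have $x'_d=x_d=y_d$. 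Finally, $N(x',y)=N(x,y)-1<N(x,y)$, so by induction $M_y\in\dq(M_{x'})$. Since $M_{x'}\in\dq(M_x)$ implies $\dq(M_{x'})\subseteq\dq(M_x)$ (as noted just after the definition of $\dq$), we conclude $M_y\in\dq(M_x)$.

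The only subtle point in this argument is the choice of $j$ (rather than $i$) as the coordinate to bump, which is forced by the non-decreasing constraint defining $\os_n^{d+1}$; everything else is a bookkeeping verification. I do not expect any serious obstacle, since the combinatorial input from \cref{lem: add 1} is already precisely tailored to this sort of step-by-step enlargement of tuples.
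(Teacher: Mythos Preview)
Your proof is correct and follows essentially the same approach as the paper: both build a chain from $x$ to $y$ by bumping one coordinate at a time via \cref{lem: add 1} and then chain the resulting inclusions $\dq(M_{z^{i+1}})\subseteq\dq(M_{z^i})$. The only cosmetic difference is the choice of which coordinate to increment at each step---the paper takes the \emph{maximal} index $j$ with $z_j^i<y_j$ (so that $z_{j+1}^i=y_{j+1}$ guarantees the hypothesis of \cref{lem: add 1}), whereas you take the minimal index $i$ with $x_i<y_i$ and then the largest $j$ on its flat plateau; both choices work and the underlying idea is the same.
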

	\begin{proof}
		Suppose $y \in \os_n^{d+1}$ satisfies  $x \leq y$ and $x_d=y_d$. Construct a sequence \[x=z^0, z^1, z^2, \dots, z^m=y\] in $\os_n^{d+1}$, where the element $z^{i+1}$ is constructed from $z^i$ as follows. If $z^i=y$, then we are finished. Otherwise, there exists a maximal $j$ such that $z_j^i < y_j$, and we must have $j<d$. Then \mbox{$z_j^i+1 \leq y_j \leq y_{j+1}=z_{j+1}^i$} and we define 
		\[
		z^{i+1}_k = \begin{cases} z^i_k+1 & \text{if } k=j \\
			z^i_k & \text{if } k \neq j.
		\end{cases}
		\]
		Notice that $M_{z^{i+1}} \in \dq(M_{z^i})$ by \cref{lem: add 1} and that this process must terminate with $z^m=y$. Thus, we get 
		\[
		M_{y} \in \dq(M_{z^{m-1}}) \subseteq \dots \subseteq \dq(M_{x})
		\]
		as required.
	\end{proof}
	
	We now present the key technical lemma needed to describe $\dq(M_x)$ completely.
	
	\begin{lemma} \label{lem: lower bound 1}
		Suppose that $C_0 \to C_1 \to \dots \to C_d \to 0$ is a minimal $d$-cokernel of a morphism $C_{-1} \rightarrow C_0$ in $\mc^d_{n}$. If $M_{z} \in \add(C_i)$ for some $1\leq i \leq d$, then there exists $M_{x} \in \add(C_0)$ such that $x\leq z$ and $x_d=z_d$. 
	\end{lemma}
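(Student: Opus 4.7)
The plan is to proceed by induction on $i$, exploiting the iterative recipe for the minimal $d$-cokernel in \cref{Example:MinimaldCokernel}. Recall that for each $i\geq 1$, the object $C_i$ arises as the minimal left $\mc^d_n$-approximation of the abelian cokernel $Q_i \colonequals \Coker(C_{i-2}\to C_{i-1})$ (with $C_{-1}$ playing the role of $C_{i-2}$ at the first step), and the morphism $C_{i-1}\to C_i$ factors as the canonical surjection $C_{i-1}\twoheadrightarrow Q_i$ followed by an approximation $\iota_i\colon Q_i\hookrightarrow C_i$, which is a monomorphism because $\mc^d_n$ is cogenerating.

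The heart of the argument is a one-step claim: if $M_z$ is an indecomposable summand of $C_i$, then there exists an indecomposable summand $M_y$ of $C_{i-1}$ satisfying $y\leq z$ and $y_d=z_d$. Iterating this one-step claim produces a chain of indecomposables in $C_{i-1},C_{i-2},\ldots,C_0$ in which each comparison preserves the last coordinate and respects the partial order, yielding the desired $M_x\in\add(C_0)$ with $x\leq z$ and $x_d=z_d$.

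For the first half of the one-step claim, let $\pi_z\colon C_i\to M_z$ denote the projection. Minimality of $\iota_i$ forces $\pi_z\iota_i$ to be non-zero: if it vanished, then $\iota_i$ would factor through the complement of $M_z$ in $C_i$, providing a strictly smaller $\mc^d_n$-approximation of $Q_i$, a contradiction. Composing with the surjection $C_{i-1}\twoheadrightarrow Q_i$ then yields a non-zero morphism $C_{i-1}\to M_z$, which in turn restricts to a non-zero morphism $M_y\to M_z$ for some indecomposable summand $M_y$ of $C_{i-1}$. By \cref{prop: morphism} this forces $y\rightsquigarrow z$, and in particular $y\leq z$ and $y_d\leq z_d$.

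The main obstacle is to upgrade the inequality $y_d\leq z_d$ to the equality $y_d=z_d$. The extra input available is that the specific non-zero morphism $M_y\to M_z$ under consideration factors through the image of $M_y$ in $Q_i$, which is a quotient of $M_y$ in $\ac$. Using the explicit staircase description of the supports of $M_y$ and $M_z$ in $\os^d_n$, together with \cref{prop: morphism}, I expect to show that any non-zero morphism from a quotient (in $\ac$) of an indecomposable in $\mc^d_n$ to another such indecomposable is forced to preserve the last coordinate. This generalises the familiar observation that, over a linear $\mathbb A_n$ quiver, a quotient of the interval module $M_{(x_0,x_1)}$ always retains its right endpoint $x_1$. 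Carrying out this combinatorial bookkeeping, most plausibly via a case analysis on the intersection of the support of $M_y$ with the image of the previous morphism, is where the real work of the lemma lies.
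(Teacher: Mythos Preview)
Your inductive strategy and the first half of the one-step claim are exactly right and match the paper's approach: you correctly use minimality of the approximation $\iota_i$ to find some indecomposable summand $M_y$ of $C_{i-1}$ with a non-zero map to $M_z$, hence $y\rightsquigarrow z$ and $y\leq z$.

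The gap is in the second half. The statement you ``expect to show'' --- that any non-zero morphism from a quotient (in $\ac$) of an indecomposable $M_y\in\mc^d_n$ to an indecomposable $M_z\in\mc^d_n$ forces $y_d=z_d$ --- is simply false. Already the identity quotient $M_y\to M_y$ followed by any non-zero morphism $M_y\to M_z$ with $y\rightsquigarrow z$ and $y_d<z_d$ gives a counterexample; passing to a proper quotient does not help. The fact that the morphism $M_y\to M_z$ factors through the image of $M_y$ in $Q_i$ carries no information beyond what you already know, since every morphism factors through its image. So the ``extra input'' you cite is vacuous, and the case analysis you anticipate cannot succeed.

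What is actually needed is a second, more substantial use of the left minimality of $\iota_i\colon Q_i\to C_i$. The paper does not pick an arbitrary $M_y$ mapping non-trivially to $M_z$; it takes $M_y$ with $y_d$ \emph{maximal} among all such summands. This ensures that \emph{every} summand $M_v$ of $C_{i-1}$ with non-zero component to $M_z$ satisfies $v\rightsquigarrow z'$ where $z'=(z_0,\dots,z_{d-1},y_d)$, so the entire $M_z$-component of the map $C_{i-1}\to C_i$ factors through the monomorphism $M_{z'}\hookrightarrow M_z$ from \cref{prop: monomorphism}. One then checks this factorisation descends to $Q_i$, and left minimality of $\iota_i$ forces $M_{z'}\to M_z$ to be an isomorphism, i.e.\ $y_d=z_d$. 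The maximal choice of $y_d$ and this minimality argument are the missing ingredients.
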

	
	\begin{proof}
		Since each $C_i \in \mc_n^d$, we may assume that every $C_i$ is equal to a direct sum of modules of the form $M_z$ for $z\in\os_n^{d+1}$.
		Choose $M_{z} \in \add(C_i)$. Write the morphism $C_{i-1} \to C_i$ as 
		\[
		\begin{pmatrix} f \\ g \end{pmatrix}\colon C_{i-1} \rightarrow M_{z} \oplus C_i'.
		\]
		By the construction of minimal $d$-cokernels, this morphism factors through the cokernel $K$ of the morphism $C_{i-2} \to C_{i-1}$ 
		as indicated in the diagram
		\[
		\begin{tikzcd}
			C_{i-1} \arrow[rr,"\begin{psmallmatrix} f \\ g \end{psmallmatrix}"]\arrow[rd, two heads, "\pi"] &  & M_{z} \oplus C_i',\\
			& K \arrow[ru, swap, "\begin{psmallmatrix} f' \\ g' \end{psmallmatrix}"]
		\end{tikzcd}
		\]
		where $\begin{psmallmatrix} f' \\ g' \end{psmallmatrix}$ is a minimal left $\mc^d_{n}$-approximation of $K$.
		
		There exists some $M_{w} \in \add(C_{i-1})$ with a non-zero morphism $M_{w} \to M_{z}$. Indeed, if this is not the case, then $f=0$. However, since $\pi$ is an epimorphism, this would imply $f'=0$, contradicting $\begin{psmallmatrix} f' \\ g' \end{psmallmatrix}$ being left minimal. Consequently, we may write $C_{i-1} = D \oplus D'$, where $D \neq 0$ and $D'$ is the largest summand of $C_{i-1}$ that maps to zero under $f$. This means that the morphism $\begin{psmallmatrix} f \\ g \end{psmallmatrix}$ may be written as 
		\begin{align*}
			\begin{pmatrix} f_1 & 0 \\ g_1 & g_2 \end{pmatrix}\colon
			D \oplus D' \xrightarrow{} M_z \oplus C_i',
		\end{align*}
		where $f_1$ is nonzero.
		For every $M_{v} \in \add(D)$, there is a non-zero morphism to $M_z$. Hence, \cref{prop: morphism} shows that $v \rightsquigarrow z$. 
		Since $D \neq 0$, we may choose $M_y \in \add(D)$ with $y_d$ maximal, i.e.\ for all $M_{v} \in \add(D)$, we have $v_d \leq y_d.$
		
		With $z$ and $y$ fixed and knowing that $y \rightsquigarrow z$, we may now consider $z'=(z_0,z_1, \dots, z_{d-1},y_d)$. Notice that $z' \rightsquigarrow z$. \cref{prop: morphism} and \cref{prop: monomorphism} thus yield that there exists a monomorphism $\iota \colon M_{z'} \to M_z$. Moreover, by the maximality of $y_d$, we have $v \rightsquigarrow z'$ for all $M_{v} \in \add(D)$. Hence, by \cref{prop: morphism}, there exists a morphism $h \colon D \rightarrow M_{z'}$ such that $f_1 = \iota \circ h$.
		In particular, the solid part of the diagram 
		\[
		\begin{tikzcd}[ampersand replacement=\&]
			D \oplus D' \arrow{r}{
				\begin{psmallmatrix} h & 0 \\ g_1 & g_2 \end{psmallmatrix}}
			\arrow[d, two heads, swap, "\pi=\begin{psmallmatrix} \pi_1 \\ \pi_2 \end{psmallmatrix}"
			] \& M_{z'} \oplus C_i' \arrow[tail]{d}{ \begin{psmallmatrix} \iota & 0 \\ 0 & 1 \end{psmallmatrix}}
			\\
			K \arrow[ru,dashed,"t"] \arrow[r, tail, swap, "\begin{psmallmatrix} f' \\ g' \end{psmallmatrix}"
			] \& M_{z} \oplus C_i'
		\end{tikzcd}
		\]
		commutes. Since $\begin{psmallmatrix} \iota & 0 \\ 0 & 1 \end{psmallmatrix}$ is a monomorphism, this implies that the composition of the morphism $C_{i-2} \to C_{i-1}$
		and $\begin{psmallmatrix} h & 0 \\ g_1 & g_2 \end{psmallmatrix}$ is zero. Thus, there is a morphism $t \colon K \to M_{z'} \oplus C_i'$ making the upper triangle in the diagram above commute.
		
		Note that the lower triangle also commutes as $\pi$ is an epimorphism. Finally, since $\begin{psmallmatrix} f' \\ g' \end{psmallmatrix}$ is an $\mc^d_{n}$-approximation, there exists a morphism $s \colon M_z \oplus C_i' \to M_{z'} \oplus C_i'$ such that 
		\begin{align*}
			t=s \circ \begin{pmatrix} f' \\ g' \end{pmatrix}.
		\end{align*}
		It follows that
		\begin{align*}
			\begin{pmatrix} f' \\ g' \end{pmatrix} = 
			\begin{pmatrix} \iota & 0 \\ 0 & 1 \end{pmatrix} \circ t = \begin{pmatrix} \iota & 0 \\ 0 & 1 \end{pmatrix} \circ s \circ \begin{pmatrix} f' \\ g' \end{pmatrix},  
		\end{align*}
		and thus $\begin{psmallmatrix} \iota & 0 \\ 0 & 1 \end{psmallmatrix} \circ s$ is an isomorphism, since $\begin{psmallmatrix} f' \\ g' \end{psmallmatrix}$ is left minimal. Therefore, the monomorphism $\iota$ is also an epimorphism, so it must be an isomorphism and we have $M_{z}\cong M_{z'}$. This shows that $y_d=z_d$, and hence $M_y \in \add(C_{i-1})$ satisfies both $y \rightsquigarrow z$ (and thus $y \leq z$) and $y_d=z_d$. 
		
		We can now repeat the argument with $M_y$ and keep going until we get $M_{x} \in \add(C_0)$ with $x\leq y\leq z$ and $x_d=y_d=z_d$.
	\end{proof}
	
	\cref{lem: lower bound 1} enables us to fully describe $\dq(M_x)$ for an indecomposable module $M_x \in \mc^d_{n}$.
	
	\begin{corollary}\label{cor: describe dq}
		Given any $x \in \os_n^{d+1}$, we have
		\[
		\dq(M_x)=\add\left\{ M_{y} \in \mc_n^d \mid y \in \os_n^{d+1}, ~x \leq y \text{ and } x_d=y_d\right\}.
		\]
	\end{corollary}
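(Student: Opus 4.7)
The plan is to prove the two inclusions separately. The containment $\supseteq$ is exactly the content of \cref{cor: dq inclusion 1}, so nothing further is required in that direction.

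For the reverse inclusion, I would use the filtration $\dq(M_x)=\bigcup_{i\geq 0}\dq(M_x)_i$ introduced just before the corollary statement, and proceed by induction on $i$ to show that every indecomposable $M_z\in\dq(M_x)_i$ satisfies $x\leq z$ and $x_d=z_d$. The base case $i=0$ is immediate, since $\dq(M_x)_0=\add(M_x)$ forces $z=x$, which trivially has the required properties.

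For the inductive step, suppose $M_z\in\dq(M_x)_{i+1}$ is indecomposable. By definition of the filtration, $M_z$ is a summand of some $C_j$ with $1\leq j\leq d$ in a minimal $d$-cokernel $Y\to C_1\to\cdots\to C_d\to 0$ arising from a morphism $X'\to Y$ with $Y\in\dq(M_x)_i$. Identifying $Y$ with the term $C_0$ in the statement of \cref{lem: lower bound 1}, the lemma produces an indecomposable summand $M_w$ of $Y$ with $w\leq z$ and $w_d=z_d$. Since $\dq(M_x)_i=\add(\cdots)$ is closed under direct summands, we have $M_w\in\dq(M_x)_i$, so the inductive hypothesis gives $x\leq w$ and $x_d=w_d$. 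Combining the two pairs of conditions yields $x\leq z$ and $x_d=z_d$, closing the induction.

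No real obstacle remains at this stage: the combinatorial heavy lifting has already been done in \cref{lem: lower bound 1}, which matches any indecomposable summand of a middle term of a minimal $d$-cokernel to an indecomposable summand of the first term, with exactly the order-theoretic and last-coordinate conditions demanded by the corollary.
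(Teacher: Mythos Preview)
Your proposal is correct and follows essentially the same approach as the paper: one inclusion by \cref{cor: dq inclusion 1}, the other by descending through the filtration via \cref{lem: lower bound 1}. The only cosmetic difference is that you phrase the descent as an explicit induction on $i$, whereas the paper iterates the application of \cref{lem: lower bound 1} until reaching level $0$; the content is identical.
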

	
	\begin{proof}
		By \cref{cor: dq inclusion 1}, it suffices to show that for any  $M_y \in \dq(M_x)$, we must have $x \leq y$ and $x_d=y_d$. Let $i\geq 0$ be such that $M_y \in \dq(M_x)_i$. If $i=0$, we have $M_y \in \dq(M_x)_0 = \add(M_x)$, so $M_y = M_x$ and the statement holds. Assume hence $i>0$. By \cref{lem: lower bound 1}, there exists $M_z \in \dq(M_x)_{i-1}$ such that $z \leq y$ and $z_d=y_d$. 
		
		Repeating this argument, we will eventually find some $M_w \in \dq(M_x)_0$ such that $w\leq y$ and $w_d=y_d$. As $\dq(M_x)_0=\add(M_x)$ yields $M_w=M_x$, the result follows.
	\end{proof}
	
	\begin{remark}\label{rem:d-tors generated by M_x}
		It follows from \cref{cor: describe dq} that any $M_y, M_z \in \dq(M_x)$ satisfy $y_d =x_d=z_d$, and thus there are no non-trivial $d$-extensions between them by \cref{prop: ext descrip}. In particular, the subcategory $\dq(M_x)$ is closed under $d$-extensions, and it is hence the smallest $d$-torsion class containing $M_x$ by \cref{cor: characterisation}.
	\end{remark}
	
	We now consider the $d$-quotient closure of a set of indecomposables.
	
	\begin{proposition}\label{prop: dq quotient}
		Given any subset $I \subseteq \os_n^{d+1}$, suppose that $M_y \in \dq\!\left(\bigoplus_{x \in I} M_x\right)$. Then $M_y \in \dq(M_x)$ for some $x \in I$. 
	\end{proposition}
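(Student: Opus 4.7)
The plan is to induct on the filtration level of $\dq(M)$, where $M = \bigoplus_{x \in I} M_x$. Recall that $\dq(M) = \bigcup_{i \geq 0} \dq(M)_i$ with $\dq(M)_0 = \add(M)$ and $\dq(M)_{i+1}$ generated by the middle and final terms of minimal $d$-cokernels starting from objects of $\dq(M)_i$. I would prove by induction on $i$ that every indecomposable $M_y \in \dq(M)_i$ lies in $\dq(M_x)$ for some $x \in I$.

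The base case $i=0$ is immediate: if $M_y \in \add\!\left(\bigoplus_{x \in I} M_x\right)$ is indecomposable, then $M_y \cong M_x$ for some $x \in I$, and certainly $M_x \in \dq(M_x)$. For the inductive step, suppose $M_y \in \dq(M)_{i+1}$. By definition, there is a minimal $d$-cokernel
\[
C_0 \to C_1 \to \cdots \to C_d \to 0
\]
in $\mc^d_n$ with $C_0 \in \dq(M)_i$ and $M_y \in \add(C_j)$ for some $1 \leq j \leq d$. Applying \cref{lem: lower bound 1} iteratively from $C_j$ back to $C_0$, I obtain an element $M_x \in \add(C_0) \subseteq \dq(M)_i$ with $x \leq y$ and $x_d = y_d$.

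By the inductive hypothesis, $M_x \in \dq(M_z)$ for some $z \in I$. Then \cref{cor: describe dq} forces $z \leq x$ and $z_d = x_d$. Combining these inequalities with $x \leq y$ and $x_d = y_d$ gives $z \leq y$ and $z_d = y_d$, so $M_y \in \dq(M_z)$ by a second application of \cref{cor: describe dq}. This completes the induction and proves the proposition.

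The main obstacle is already absorbed into \cref{lem: lower bound 1}, which provides the crucial ``descent'' from a module in $C_j$ to a module in $C_0$ while preserving the last coordinate. Once that lemma is in hand, the argument reduces to a clean combinatorial comparison using the explicit description of $\dq(M_x)$ from \cref{cor: describe dq}, and no further analysis of $d$-cokernel middle terms is required.
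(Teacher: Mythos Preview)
Your proof is correct and follows essentially the same approach as the paper's: induction on the filtration level, with \cref{lem: lower bound 1} providing the descent to $C_0$ and the combinatorial description of $\dq(M_x)$ from \cref{cor: describe dq} closing the argument. One cosmetic note: \cref{lem: lower bound 1} already gives you an element of $\add(C_0)$ in a single application, so the word ``iteratively'' is unnecessary.
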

	
	\begin{proof}
		We prove this by induction on the filtration of $\dq(\bigoplus_{x \in I} M_x)$. If 
		\[
		M_y \in \dq\!\left(\bigoplus_{x \in I} M_x\right)_{0}=\add\!\left(\bigoplus_{x \in I} M_x\right),
		\]
		the statement clearly holds.  
		
		Now suppose $M_y \in \dq\!\left(\bigoplus_{x \in I} M_x\right)_i$ for some $i>0$, and that the result is known for all $M_z \in \dq\!\left(\bigoplus_{x \in I} M_x\right)_{i-1}$. By construction, there must exist a minimal $d$-quotient
		\begin{align*}
			X \xrightarrow{f} Y \to C_1 \to \dots \to C_d \to 0
		\end{align*}
		in $\mc^d_{n}$ of some morphism $f$ such that $Y \in \dq\!\left(\bigoplus_{x \in I} M_x\right)_{i-1}$ and $M_y \in \add(C_j)$ for some $1\leq j \leq d$. Hence, combining \cref{lem: lower bound 1} with \cref{cor: dq inclusion 1} shows that there exists 
		\[
		M_z \in \add(Y) \subseteq \dq\!\left(\bigoplus_{x \in I} M_x\right)_{i-1}
		\]
		such that $M_y \in \dq(M_z)$. By the induction hypothesis, we have $M_z \in \dq(M_x)$ for some $x \in I$, and thus it follows that $M_y \in \dq(M_z) \subseteq \dq(M_x)$ as required.
	\end{proof}
	
	Using \cref{prop: dq quotient}, we are able to give a complete description of the $d$-quotient closure of any module in $\mc^d_{n}$. 
	
	\begin{corollary}\label{cor: dqOnSumOfIndec}
		Given a subset $I \subseteq \os_n^{d+1}$, the set of indecomposable modules in $\dq\!\left(\bigoplus_{x \in I} M_x\right)$ is precisely
		\begin{align*}
			\bigcup_{x \in I} \left\{ M_{y} \in \mc_n^d \mid y \in \os_n^{d+1}, \ x\leq y \text{ and }  x_d=y_d
			\right\}.
		\end{align*}
	\end{corollary}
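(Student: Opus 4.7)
The proof proposal is to combine the two main structural results proved just before the corollary, namely \cref{prop: dq quotient} and \cref{cor: describe dq}, and verify the two inclusions separately.

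For the inclusion of the right-hand side in $\dq\!\left(\bigoplus_{x \in I} M_x\right)$, I would argue as follows. Fix $x \in I$; since $M_x$ is a direct summand of $\bigoplus_{x' \in I} M_{x'}$, it belongs to $\add\!\left(\bigoplus_{x' \in I} M_{x'}\right) = \dq\!\left(\bigoplus_{x' \in I} M_{x'}\right)_0$, and hence to $\dq\!\left(\bigoplus_{x' \in I} M_{x'}\right)$. Because the latter is by construction closed under $d$-quotients, \cref{cor: describe dq} then yields
\[
\dq(M_x) = \add\!\left\{ M_y \in \mc_n^d \mid y \in \os_n^{d+1},\ x \leq y \text{ and } x_d = y_d \right\} \subseteq \dq\!\left(\bigoplus_{x' \in I} M_{x'}\right).
\]
Taking the union over $x \in I$ establishes one direction.

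For the reverse inclusion, let $M_y$ be an indecomposable module in $\dq\!\left(\bigoplus_{x \in I} M_x\right)$. By \cref{prop: dq quotient} there exists $x \in I$ such that $M_y \in \dq(M_x)$, and then \cref{cor: describe dq} identifies $M_y$ with some $M_y$ satisfying $x \leq y$ and $x_d = y_d$. Hence $M_y$ belongs to the set on the right-hand side, finishing the proof. No routine calculation is required beyond invoking these two results, and there is no real obstacle: the work has already been done in \cref{prop: dq quotient} (which guarantees that the $d$-quotient closure of a finite direct sum of indecomposables decomposes as a union of individual $d$-quotient closures) and in \cref{cor: describe dq} (which computes each individual closure explicitly).
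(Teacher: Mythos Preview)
Your proof is correct and follows essentially the same approach as the paper: the paper's proof simply observes that $\dq(M_x) \subseteq \dq\!\left(\bigoplus_{x \in I} M_x\right)$ for each $x \in I$ and then invokes \cref{cor: describe dq} and \cref{prop: dq quotient}, which is exactly what you have spelled out in more detail.
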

	\begin{proof}
		Since it is clear that $\dq(M_x) \subseteq \dq\!\left(\bigoplus_{x \in I} M_x\right)$ for each $x \in I$, this is a direct consequence of \cref{cor: describe dq} and  \cref{prop: dq quotient}.
	\end{proof}
	
	For a set of indecomposable modules in $\mc^d_{n}$, \cref{cor: dqOnSumOfIndec} gives a purely combinatorial description of all the indecomposables in their $d$-quotient closure. Unlike what we saw in \cref{rem:d-tors generated by M_x}, there may now exist non-trivial $d$-extensions, so the $d$-quotient closure is not necessarily a $d$-torsion class. However, using the results developed in this subsection, we give a full combinatorial description of the subsets corresponding to $d$-torsion classes. Note that any subcategory of $\mc^d_{n}$ which is closed under direct summands is uniquely determined by its indecomposable modules, i.e.\ by a subset of $\os_n^{d+1}$. We hence use the notation
	\begin{align*}
		\uc_I \colonequals \add\{ M_{y} \in \mc_n^d \mid y \in I\}
	\end{align*}
	for the subcategory of $\mc^d_{n}$ associated to a subset $I \subseteq \os_n^{d+1}$.
	
	
	\begin{theorem}\label{thm:higherAuslanderalg}
		Consider a subset $I \subseteq \os_n^{d+1}$. The subcategory $\uc_I$ is a $d$-torsion class in $\mc^d_{n}$ if and only if the following hold for any elements $x,z\in \os_n^{d+1}$:
		\begin{enumerate}
			\item \label{thm:higherAuslanderalg:1} If $x\leq z$ and $x_d=z_d$, then $x\in I$ implies $z\in I$.
			\item \label{thm:higherAuslanderalg:2} If $x\rightsquigarrow \tau_d(z)$ and $x,z\in I$, then any $y\in \os_n^{d+1}$ with
			$y_i \in \{x_i,z_i\}$ for each $i$ must be in $I$. 
		\end{enumerate}
	\end{theorem}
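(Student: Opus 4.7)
The plan is to invoke \cref{cor: characterisation} combined with \cref{prop: extension closure indec}: together, these reduce the task to showing that closure of $\uc_I$ under $d$-quotients is equivalent to condition (1), and that closure of $\uc_I$ under $d$-extensions with indecomposable end terms is equivalent to condition (2). Both equivalences will follow by direct comparison with the combinatorial descriptions of $d$-quotients and $d$-extensions developed earlier in this section.

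Since $\uc_I$ is closed under direct summands, it is closed under $d$-quotients if and only if $\dq\!\left(\bigoplus_{x\in I} M_x\right)$ is contained in $\uc_I$. By \cref{cor: dqOnSumOfIndec}, the indecomposables in this $d$-quotient closure are exactly the $M_y$ with $y \in \os_n^{d+1}$ such that some $x \in I$ satisfies $x \leq y$ and $x_d = y_d$. Hence closure under $d$-quotients translates directly into condition (1).

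For the second equivalence, fix $x, z \in I$ and consider $d$-extensions with end terms $M_x$ and $M_z$. By \cref{prop: ext descrip}, a non-trivial such $d$-extension exists if and only if $x \rightsquigarrow \tau_d(z)$, in which case its middle terms are $E_k = \bigoplus_{y \in Z_k} M_y$ with $y_i \in \{x_i,z_i\}$ for each $i$; by \cref{rem: min d-ext} this extension is moreover minimal. The trivial class of $\Ext^d(M_z, M_x)$ admits the representative
\[
0 \to M_x \xrightarrow{\mathrm{id}} M_x \to 0 \to \cdots \to 0 \to M_z \xrightarrow{\mathrm{id}} M_z \to 0
\]
(with the obvious modification $0 \to M_x \to M_x \oplus M_z \to M_z \to 0$ when $d=1$), whose middle terms lie in $\add(M_x \oplus M_z) \subseteq \uc_I$. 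By \cref{rem: only look at minimal}, closure of $\uc_I$ under $d$-extensions with indecomposable end terms thus reduces to the statement that for every pair $x, z \in I$ with $x \rightsquigarrow \tau_d(z)$ and every $y \in \os_n^{d+1}$ with $y_i \in \{x_i, z_i\}$ for all $i$, we have $y \in I$. This is precisely condition (2).

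Combining the two equivalences with \cref{cor: characterisation} and \cref{prop: extension closure indec} completes the proof in both directions. The principal subtlety is the handling of trivial $d$-extensions between indecomposables $M_x, M_z \in \uc_I$ with $x \not\rightsquigarrow \tau_d(z)$: one must check that the trivial equivalence class of $\Ext^d(M_z,M_x)$ does not impose an extra combinatorial constraint beyond (2). This is settled by exhibiting the explicit representative displayed above, whose terms are automatically in $\uc_I$ once the end terms are, so that condition (2) exhausts all the non-trivial closure requirements.
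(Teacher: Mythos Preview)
Your proof is correct and follows essentially the same approach as the paper: both reduce the characterisation to \cref{cor: characterisation} and \cref{prop: extension closure indec}, then identify condition~(1) with closure under $d$-quotients via \cref{cor: dqOnSumOfIndec} and condition~(2) with closure under $d$-extensions by indecomposables via \cref{prop: ext descrip} and \cref{rem: min d-ext}. Your explicit treatment of the trivial-extension case (when $x \not\rightsquigarrow \tau_d(z)$) is a small elaboration the paper leaves implicit, but the overall argument is the same.
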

	
	\begin{remark}
		The product order on $N_n^{d+1}$ restricts to a partial order on the subset 
		\[
		\{y\in \os_n^{d+1} \mid y_d=m\} \subseteq N_n^{d+1}.
		\]
		\cref{thm:higherAuslanderalg}(\ref{thm:higherAuslanderalg:1}) is equivalent to $\{y\in I\mid y_d=m\}$ being an \textit{upper set} in $\{y\in \os_n^{d+1} \mid y_d=m\}$ for each $m= 0,\ldots, n-1$.
	\end{remark}
	
	\begin{proof}[Proof of \cref{thm:higherAuslanderalg}]
		By \cref{cor: dqOnSumOfIndec}, condition (\ref{thm:higherAuslanderalg:1}) holds if and only if $\uc_I$ is closed under $d$-quotients. It follows from \cref{prop: ext descrip} and \cref{rem: min d-ext} that condition (\ref{thm:higherAuslanderalg:2}) is equivalent to $\uc_I$ being closed under $d$-extensions by indecomposables. Moreover, if $\uc_I$ is closed under $d$-quotients, it is closed under $d$-extensions by indecomposables if and only if it is closed under all $d$-extensions by \cref{prop: extension closure indec}. From \cref{cor: characterisation}, we know that $\uc_I$ is a $d$-torsion class if and only if it is closed under $d$-extensions and $d$-quotients, which proves the claim.
	\end{proof}
	
	\subsection{Computational results}\label{sec:computational results}
	
	We are now ready to present two algorithms for explicitly computing higher torsion classes associated to higher Auslander algebras of type $\mathbb A$. The first algorithm computes the minimal $d$-torsion class containing a given module. The second computes all $d$-torsion classes associated to a higher Auslander algebra. Python code implementing the algorithms is available as a Google Colab notebook.\footnote{
		\url{https://colab.research.google.com/drive/172Q-UZHvdPOhngGkl1T_xdYLzntg31dY}}
	
	We extend our notation for the sake of readability. In particular, for an element $x\in \os_n^{d+1}$, we set $\dq(x)=\{y \in \os_n^{d+1} \mid M_y\in \dq(M_x)\}$. By \cref{cor: describe dq}, we know that
	\[
	\dq(x)=\left\{ y \in \os_n^{d+1} \mid x\leq y\ \text{and}\  x_d=y_d \right\}.
	\]
	Given a set of indecomposable modules $M_{x^1}, \dots, M_{x^r}$ in \mbox{$\mc^d_{n}$,} we let $\uc(M_{x^1}, \dots, M_{x^r})$ denote the smallest $d$-torsion class in $\mc^d_{n}$  containing $M_{x^1}, \dots, M_{x^r}$. For a set $X = \{x^1, \dots, x^r\} \subseteq \os_n^{d+1}$, we let $U(X)$ be the set of $(d+1)$-tuples in $\os^{d+1}_n$ corresponding to the indecomposables in $\uc(M_{x^1}, \dots, M_{x^r})$. If the set $X$ is empty, then $U(X)$ is also empty and corresponds to the trivial  $d$-torsion \mbox{class $\{0\}$}. 
	
	\begin{algorithm}\label{alg:make torsion class}
		Given a set of initial indecomposable modules in $\mc_n^d$, or equivalently a subset of $\os_n^{d+1}$, this algorithm computes the minimal $d$-torsion class containing those modules.
		
		Input: Integers $d\geq 1$, $n\geq 1$  and a set $X = \{x^1, \dots, x^r\} \subseteq \os_n^{d+1}$.
		\begin{enumerate}
			\item \label{alg:maketorsion:1}Let $I=X$. 
			\item \label{alg:maketorsion:2}For each pair $x,y\in I$ such that $x\rightsquigarrow \tau_d(y)$, add the $(d+1)$-tuple $(x_0, \dots,x_{d-1}, y_d)$ to $I$.
			\item \label{alg:maketorsion:3}For every $x\in I$, add the elements of $\dq(x)$ to $I$.
			\item \label{alg:maketorsion:4}If new elements were added to $I$ in step (\ref{alg:maketorsion:2}) or (\ref{alg:maketorsion:3}), repeat from step (\ref{alg:maketorsion:2}). Otherwise, terminate the process.
		\end{enumerate}
		
		Output: The set $I$.
	\end{algorithm}
	
	Since $\os_n^{d+1}$ is a finite set, \cref{alg:make torsion class} will always terminate and give a subset $I \subseteq \os_n^{d+1}$ as output. Recall that we use the notation
	\begin{align*}
		\uc_I \colonequals \add\{ M_{y} \in \mc_n^d \mid y \in I\}
	\end{align*}
	for the corresponding subcategory of $\mc^d_{n}$. \cref{lem: first algorithm} shows that the set $I$ produced in \cref{alg:make torsion class} indeed corresponds to the minimal $d$-torsion class containing the indecomposable modules we started with.
	
	\begin{proposition}\label{lem: first algorithm}
		The set $I$ constructed in \cref{alg:make torsion class} satisfies
		$\uc_I=\uc(M_{x^1}, \dots, M_{x^r})$.
	\end{proposition}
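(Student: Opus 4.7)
The plan is to prove the two inclusions $\uc_I \subseteq \uc(M_{x^1}, \dots, M_{x^r})$ and $\uc(M_{x^1}, \dots, M_{x^r}) \subseteq \uc_I$ separately; termination of \cref{alg:make torsion class} is automatic since $\os_n^{d+1}$ is finite and $I$ grows monotonically.

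For the inclusion $\uc_I \subseteq \uc(M_{x^1}, \dots, M_{x^r})$, I would argue by induction on the iterations of the algorithm that every element added to $I$ indexes an indecomposable of $\uc(M_{x^1}, \dots, M_{x^r})$. The base case (step (\ref{alg:maketorsion:1})) is immediate. For step (\ref{alg:maketorsion:2}), if $x, y \in I$ with $x \rightsquigarrow \tau_d(y)$, then by \cref{prop: ext descrip} the tuple $(x_0, \dots, x_{d-1}, y_d)$ indexes a middle term of the $d$-extension between $M_x$ and $M_y$; since $\uc(M_{x^1}, \dots, M_{x^r})$ is closed under $d$-extensions by \cref{theorem: extension closed}, the corresponding module lies in it. For step (\ref{alg:maketorsion:3}), \cref{cor: describe dq} together with closure under $d$-quotients yields $\dq(M_x) \subseteq \uc(M_{x^1}, \dots, M_{x^r})$ for each $x \in I$.

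For the reverse inclusion, the plan is to verify that the terminal set $I$ satisfies both conditions of \cref{thm:higherAuslanderalg}. This will imply that $\uc_I$ is a $d$-torsion class; since $X \subseteq I$ ensures $\uc_I$ contains every $M_{x^i}$, it must then contain the smallest such $d$-torsion class $\uc(M_{x^1}, \dots, M_{x^r})$. Condition (\ref{thm:higherAuslanderalg:1}) of \cref{thm:higherAuslanderalg} is easy: the algorithm terminates only when step (\ref{alg:maketorsion:3}) adds nothing new, which by \cref{cor: describe dq} is precisely stability of $I$ under $\dq$.

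The main obstacle is verifying condition (\ref{thm:higherAuslanderalg:2}) of \cref{thm:higherAuslanderalg}: given $x, z \in I$ at termination with $x \rightsquigarrow \tau_d(z)$, every $y \in \os_n^{d+1}$ with $y_i \in \{x_i, z_i\}$ for all $i$ must lie in $I$, even though step (\ref{alg:maketorsion:2}) only introduced the single tuple $z^* = (x_0, \dots, x_{d-1}, z_d)$. The key observation is a case split on $y_d \in \{x_d, z_d\}$. If $y_d = x_d$, then from $x_i \leq z_i$ and $y_i \in \{x_i, z_i\}$ we get $y \geq x$ coordinatewise, so $y \in \dq(x) \subseteq I$ by \cref{cor: describe dq}. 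If $y_d = z_d$, then similarly $y \geq z^*$ coordinatewise with $y_d = z^*_d$, so $y \in \dq(z^*) \subseteq I$, where the last containment uses that step (\ref{alg:maketorsion:3}) added all of $\dq(z^*)$ after step (\ref{alg:maketorsion:2}) had introduced $z^*$ itself. This splitting---showing that $x$ and $z^*$ together dominate all middle terms in the $\dq$-order---is precisely what allows the algorithm to add only one middle term per pair without losing any.
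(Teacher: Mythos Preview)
Your proof is correct and follows essentially the same route as the paper. The paper does not invoke \cref{thm:higherAuslanderalg} directly but instead unpacks its ingredients (\cref{cor: characterisation}, \cref{prop: extension closure indec}, and \cref{prop: dq quotient}) to show $\uc_I$ is a $d$-torsion class; your case split on $y_d$ to show that every middle term lies in $\dq(x)$ or $\dq(z^*)$ is exactly the observation the paper makes, and your inductive argument for $\uc_I \subseteq \uc(M_{x^1},\dots,M_{x^r})$ is a cleaner version of the paper's brief minimality remark.
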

	
	\begin{proof}
		We need to show that $\uc_I$ is the minimal $d$-torsion class containing $M_{x^1}, \dots, M_{x^r}$. Step (\ref{alg:maketorsion:1}) of the algorithm ensures that $M_{x^1}, \dots, M_{x^r}\in\uc_I$. By \cref{prop: dq quotient}, step (\ref{alg:maketorsion:3}) implies that $\uc_I$ is closed under $d$-quotients.
		
		Consider two indecomposable modules $M_x, M_y \in \uc_I$ with $\Ext^d_{A_n^d}(M_y,M_x) \neq 0$. This means that the pair $x,y \in I$ satisfies $x\rightsquigarrow \tau_d(y)$ by \cref{prop: ext descrip}. Moreover, if $M_z$ is a direct summand in one of the middle terms in the non-trivial $d$-extension from $M_x$ to $M_y$ described in \cref{prop: ext descrip}, then either $z\in \dq(x)$ or $z\in \dq((x_{0}, \dots,x_{d-1}, y_d))$. Step (\ref{alg:maketorsion:2}) followed by step (\ref{alg:maketorsion:3}) thus ensures that $M_z \in \uc_I$, so $\uc_I$ is closed under  $d$-extensions with indecomposable end terms. By \cref{prop: extension closure indec}, this implies that $\uc_I$ is closed under all $d$-extensions. We can hence conclude that $\uc_I$ is a $d$-torsion class by \cref{cor: characterisation}.
		
		We now know that $\uc_I$ is a $d$-torsion class containing the modules $M_{x^1}, \dots, M_{x^r}$. 
		However, objects added to $I$ in the algorithm correspond to either $M_{x^1}, \dots, M_{x^r}$ or to indecomposable direct summands obtained from minimal $d$-quotients or minimal $d$-extensions, see \cref{rem: min d-ext}, and the result follows. 
	\end{proof}
	
	\begin{remark}\label{rem: impr alg 1}
		An improvement of \cref{alg:make torsion class}, omitted for the sake of readability, is that on subsequent iterations it suffices to only consider $(d+1)$-tuples added in the previous iteration in step (\ref{alg:maketorsion:3}). Similarly, in step (\ref{alg:maketorsion:2}), one only needs to to consider pairs where at least one $(d+1)$-tuple was added in the previous iteration.
	\end{remark}
	
	Building on \cref{alg:make torsion class}, we give an algorithm that determines all $d$-torsion classes in $\mc_n^d$.
	
	\begin{algorithm}\label{alg: find torsion classes}
		This algorithm computes all higher torsion classes associated to a higher Auslander algebra.
		
		Input: Integers $d\geq 1$, $n\geq 1$.
		\begin{enumerate}
			\item \label{alg:findtorsion:1} Let $\ucc$ be the singleton set containing the empty set. Set $l=1$.
			\item \label{alg:findtorsion:2} For all sets $X$ consisting of $l$ distinct $(d+1)$-tuples in $\os_n^{d+1}$, compute $U(X)$ using \cref{alg:make torsion class} and add it to $\ucc$.
			\item \label{alg:findtorsion:3} If new elements were added to $\ucc$ in step (\ref{alg:findtorsion:2}), increase $l$ by one and repeat from step (\ref{alg:findtorsion:2}). Otherwise, terminate the process.
		\end{enumerate}
		
		Output: The set $\ucc$.
	\end{algorithm}
	
	Similarly as in the case of \cref{alg:make torsion class}, it should be noted that the algorithm above must terminate as $\os_n^{d+1}$ is a finite set.
	
	\begin{proposition}
		The $d$-torsion classes in $\mc^d_{n}$ are indexed by $\ucc$. In other words,
		\[
		\dtor(\mc^d_{n})=\left\{\uc_I  \mid I\in\ucc \right\}.
		\]
	\end{proposition}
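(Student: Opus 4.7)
The plan is to verify the two inclusions in the claimed equality.

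For $\{\uc_I \mid I \in \ucc\} \subseteq \dtor(\mc^d_{n})$: the empty set, added in step (1) of \cref{alg: find torsion classes}, corresponds to the zero subcategory, which is a $d$-torsion class. Every other element of $\ucc$ has the form $U(X)$ for some finite $X=\{x^1,\dots,x^r\} \subseteq \os_n^{d+1}$, and by \cref{lem: first algorithm} the subcategory $\uc_{U(X)}$ equals the minimal $d$-torsion class containing $M_{x^1},\dots,M_{x^r}$, and is therefore a $d$-torsion class.

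For the reverse inclusion, let $\uc \subseteq \mc^d_{n}$ be any $d$-torsion class. Since $\mc^d_{n}$ has only finitely many indecomposables (indexed by $\os_n^{d+1}$) and since $\uc$ is closed under direct summands by \cref{rem: closed under summands}, one can write $\uc = \uc_X$ where $X \subseteq \os_n^{d+1}$ is the finite set indexing the indecomposables of $\uc$. Applying \cref{lem: first algorithm} to this generating set gives $\uc = \uc_{U(X)}$, so the task reduces to showing $U(X) \in \ucc$.

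The key observation is that $U$ behaves as a closure operator on finite subsets of $\os_n^{d+1}$: if $U(X_1)=U(X_2)$, then $U(X_1 \cup \{y\})=U(X_2 \cup \{y\})$ for any $y \in \os_n^{d+1}$, since both compute the minimal $d$-torsion class containing $M_y$ together with the common torsion class $\uc_{U(X_i)}$. Assuming \cref{alg: find torsion classes} terminated at step $l$, I would prove by strong induction on $|Y|$ that $U(Y) \in \ucc$ for every finite $Y \subseteq \os_n^{d+1}$. The base case $|Y| \leq l$ is immediate. For the inductive step, write $Y = X \cup \{y\}$ with $|X|=|Y|-1$ and $y \notin X$. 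If $|Y|=l+1$, the termination hypothesis forces $U(X)=U(Z)$ for some $Z$ with $|Z| \leq l-1$, whence $U(Y)=U(Z \cup \{y\})$ has $|Z \cup \{y\}| \leq l$ and so lies in $\ucc$. If $|Y|>l+1$, the inductive hypothesis applied to $X$ gives $U(X)=U(Z)$ for some $Z$ with $|Z| \leq l$, and then $U(Y)=U(Z \cup \{y\})$ with $|Z \cup \{y\}| \leq l+1<|Y|$, so the inductive hypothesis applies again to $Z \cup \{y\}$.

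The main potential obstacle is justifying the soundness of the termination condition in \cref{alg: find torsion classes}: a priori a new $d$-torsion class could first appear at some size strictly above the step at which the iteration stops. The closure-operator property of $U$ is precisely what rules this out, by letting us substitute any generating subset by a strictly smaller one of the same torsion class at the inductive step.
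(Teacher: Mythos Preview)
Your proof is correct and follows essentially the same approach as the paper: both arguments hinge on the substitution property that $U(X_1)=U(X_2)$ implies $U(X_1\cup\{y\})=U(X_2\cup\{y\})$, and use the termination condition to repeatedly replace generating sets by smaller ones. The only cosmetic difference is that you organise this as an explicit strong induction on $|Y|$, whereas the paper introduces the minimal ``redundancy level'' $r$ and derives a contradiction from assuming a $d$-torsion class needs $m\geq r$ generators.
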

	
	\begin{proof}
		First observe that the trivial $d$-torsion class $\{0\}=\uc_{\emptyset}$, and $\emptyset\in\ucc$.
		As $\mc_n^d$ has finitely many indecomposable objects, any non-trivial $d$-torsion class $\vc$ in $\mc_n^d$ can be written as \mbox{$\vc=\uc(M_{x^1}, \dots, M_{x^m})$} for some positive integer $m$.
		
		Let $r$ be the lowest positive integer for which any $d$-torsion class of the form $\uc(M_{x^1}, \dots, M_{x^r})$ can also be written as $\uc(M_{y^1}, \dots, M_{y^s})$ for some $s<r$. By construction, the set $\ucc$ indexes all $d$-torsion classes of the form $\uc(M_{x^1}, \dots, M_{x^l})$ for $l<r$. 
		
		Suppose that $\vc$ is a non-trivial $d$-torsion class, and let $m\geq 0$ be minimal such that 
		\[
		\vc=\uc(M_{x^1}, \dots, M_{x^m}).
		\]
		We claim that $\vc=\uc_I$ for some $I\in\ucc$, i.e.\ that $m<r$. Indeed, if $m\geq r$, consider the $d$-torsion class $\vc'= \uc(M_{x^1}, \dots, M_{x^{r}})$. By the assumption on $r$, we can write $\vc'$ as $\uc(M_{z^1}, \dots, M_{z^s})$ for some $s< r$. But then $\vc$ is of the form $\uc(M_{z^1}, \dots M_{z^s},M_{x^{r+1}},\dots M_{x^m})$, and can hence be generated by $m+s-r<m$ indecomposable modules. This contradicts the minimality of $m$, and the result follows.
	\end{proof}
	
	\begin{remark}
		As before, \cref{alg: find torsion classes} is presented in its simplest form for the sake of readability. 
		For efficient computations, we use the following improvements: 
		\begin{itemize}
			\item We precalculate whether $x \rightsquigarrow y$ and  $x \rightsquigarrow \tau_d(y)$ for all $x,y \in \os_n^{d+1}$.
			\item If a set $X$ contains $x,y$ with $y\in \dq(x)$, then $U(X)$ has already been added to $\ucc$, so we skip the computation of $U(X)$ in step (\ref{alg:findtorsion:2}).
		\end{itemize}
	\end{remark}
	
	Using \cref{alg: find torsion classes}, we can compute the number of higher torsion classes associated to a higher Auslander algebra. These computational results are summarised in \cref{tab:Auslander torsion classes}.
	
	\begin{table}[thb]
		\centering
		\begin{tabular}{|cc|llllll|}\hline
			&&\multicolumn{6}{|c|}{$n$}\\
			& 	&1 & 2 & 3 & 4 & 5 & 6\\\hline
			\multirow{8}{1em}{$d$}	&1 & 2 & 5 & 14 & 42 & 132 & 429 \\
			&2 & 2 & 6 & 25 & 140 & 1036 & 10040\\
			&3 & 2 & 7 & 46 & 643 & 22224 & \\
			&4 & 2 & 8 & 87 & 4147 & & \\
			&5 & 2 & 9 & 168 & 36543 & & \\
			&6 & 2 & 10 & 329 & 427527 & & \\
			&7 & 2 & 11 & 650 & & & \\
			&8 & 2 & 12 & 1291 & & & \\\hline
		\end{tabular}
		\caption{The number of $d$-torsion classes in the $d$-cluster tilting subcategory $\mc^d_{n}$ of the higher Auslander algebra $A_{n}^d$.}  
		\label{tab:Auslander torsion classes}
	\end{table}
	
	In addition to computing the full set of $d$-torsion classes in $\mc_n^d$, our code also produces the associated Hasse diagram. Note that it gives a fully annotated version of the Hasse diagram, specifying the indecomposable modules contained in each $d$-torsion class.
	
	\begin{example}\label{Example:A33}
		Consider the higher Auslander algebra $A_{3}^{3}$. The Hasse diagram of the $3$-torsion classes in the $3$-cluster tilting subcategory $\mc_3^3$ is shown in \cref{fig:Hasse}.
		We note that the vertices labelled $w$, $x$ and $v$ have valency $3$, $4$ and $5$, respectively, so the lattice is not Hasse-regular. We moreover note that the lattice is not semi-distributive as it fails the criterion of meet-semi-distributivity \cite[p.\ 479]{Graetzer}. Indeed, using the notation $\vee$ for join and $\wedge$ for meet, we see that $x\wedge y=v=x\wedge z$, but that $x \wedge (y\vee z)=x \wedge w =x \neq v$.
	\end{example}
	
	\begin{figure}[pht]
		\centering
		
		\begin{tikzpicture}[line join=bevel, yscale=.5, xscale=1]
			\node (1) at (127.0bp,810.0bp) {$\bullet$};
			\node (23) at (199.0bp,738.0bp) {$\bullet$};
			\node (25) at (127.0bp,738.0bp) {$\bullet$};
			\node (2) at (55.0bp,810.0bp) {$\bullet$};
			\node (35) at (55.0bp,594.0bp) {$\bullet$};
			\node (26) at (127.0bp,954.0bp) {$\bullet$};
			\node (36) at (190.0bp,882.0bp) {$\bullet$};
			\node (38) at (127.0bp,882.0bp) {$\bullet$};
			\node (22) at (199.0bp,666.0bp) {$\bullet$};
			\node (33) at (271.0bp,666.0bp) {$\bullet$};
			\node (5) at (170.0bp,234.0bp) {$\bullet$};
			\node (4) at (190.0bp,162.0bp) {$\bullet$};
			\node (16) at (199.0bp,594.0bp) {$\bullet$};
			\node (9) at (254.0bp,522.0bp) {$\bullet$};
			\node (15) at (110.0bp,522.0bp) {$\bullet$};
			\node (30) at (326.0bp,378.0bp) {$z$};
			\node (28) at (271.0bp,306.0bp) {$\bullet$};
			\node (29) at (326.0bp,306.0bp) {$\bullet$};
			\node (11) at (215.0bp,306.0bp) {$v$};
			\node (27) at (271.0bp,234.0bp) {$\bullet$};
			\node (43) at (262.0bp,162.0bp) {$\bullet$};
			\node (32) at (326.0bp,522.0bp) {$\bullet$};
			\node (31) at (326.0bp,450.0bp) {$\bullet$};
			\node (19) at (254.0bp,450.0bp) {$\bullet$};
			\node (24) at (127.0bp,666.0bp) {$\bullet$};
			\node (7) at (254.0bp,378.0bp) {$x$};
			\node (3) at (190.0bp,90.0bp) {$\bullet$};
			\node (13) at (182.0bp,450.0bp) {$\bullet$};
			\node (34) at (271.0bp,738.0bp) {$\bullet$};
			\node (14) at (110.0bp,450.0bp) {$\bullet$};
			\node (6) at (215.0bp,234.0bp) {$\bullet$};
			\node (40) at (199.0bp,810.0bp) {$\bullet$};
			\node (37) at (55.0bp,1026.0bp) {$\bullet$};
			\node (39) at (55.0bp,954.0bp) {$\bullet$};
			\node (42) at (110.0bp,90.0bp) {$\bullet$};
			\node (10) at (110.0bp,378.0bp) {$\bullet$};
			\node (8) at (130.0bp,306.0bp) {$\bullet$};
			\node (21) at (55.0bp,306.0bp) {$\bullet$};
			\node (41) at (127.0bp,1026.0bp) {$\bullet$};
			\node (45) at (127.0bp,1098.0bp) {$\mc^3_{3}$};
			\node (44) at (271.0bp,810.0bp) {$w$};
			\node (18) at (127.0bp,594.0bp) {$\bullet$};
			\node (17) at (182.0bp,522.0bp) {$\bullet$};
			\node (20) at (271.0bp,594.0bp) {$\bullet$};
			\node (0) at (110.0bp,18.0bp) {$\{0\}$};
			\node (12) at (182.0bp,378.0bp) {$y$};
			\draw [->] (1) ..controls (152.2bp,784.8bp) and (165.32bp,771.68bp)  .. (23);
			\draw [->] (1) ..controls (127.0bp,784.13bp) and (127.0bp,774.97bp)  .. (25);
			\draw [->] (2) ..controls (55.0bp,754.39bp) and (55.0bp,667.55bp)  .. (35);
			\draw [->] (26) ..controls (149.17bp,928.66bp) and (160.26bp,915.99bp)  .. (36);
			\draw [->] (26) --(38);
			\draw [->] (23) ..controls (199.0bp,712.13bp) and (199.0bp,702.97bp)  .. (22);
			\draw [->] (23) ..controls (224.2bp,712.8bp) and (237.32bp,699.68bp)  .. (33);
			\draw [->] (5) --(4);
			\draw [->] (16) ..controls (218.51bp,568.46bp) and (227.43bp,556.78bp)  .. (9);
			\draw [->] (16) ..controls (168.17bp,569.06bp) and (149.82bp,554.21bp)  .. (15);
			\draw [->] (30) ..controls (306.49bp,352.46bp) and (297.57bp,340.78bp)  .. (28);
			\draw [->] (30) -- (29); 
			\draw [->] (30) --(11);
			\draw [->] (27) ..controls (242.97bp,209.09bp) and (227.16bp,195.03bp)  .. (4);
			\draw [->] (27) ..controls (267.77bp,208.13bp) and (266.62bp,198.97bp)  .. (43);
			\draw [->] (28)--(5);
			\draw [->] (28) ..controls (271.0bp,280.13bp) and (271.0bp,270.97bp)  .. (27);
			\draw [->] (32) ..controls (326.0bp,496.13bp) and (326.0bp,486.97bp)  .. (31);
			\draw [->] (32) ..controls (300.8bp,496.8bp) and (287.68bp,483.68bp)  .. (19);
			\draw [->] (25) ..controls (152.2bp,712.8bp) and (165.32bp,699.68bp)  .. (22);
			\draw [->] (25) ..controls (127.0bp,712.13bp) and (127.0bp,702.97bp)  .. (24);
			\draw [->] (7) --(11);
			\draw [->] (4) ..controls (190.0bp,136.13bp) and (190.0bp,126.97bp)  .. (3);
			\draw [->] (31) ..controls (326.0bp,424.13bp) and (326.0bp,414.97bp)  .. (30);
			\draw [->] (31) ..controls (300.8bp,424.8bp) and (287.68bp,411.68bp)  .. (7);
			\draw [->] (9) ..controls (228.8bp,496.8bp) and (215.68bp,483.68bp)  .. (13);
			\draw [->] (9) ..controls (254.0bp,496.13bp) and (254.0bp,486.97bp)  .. (19);
			\draw [->] (34) ..controls (271.0bp,712.13bp) and (271.0bp,702.97bp)  .. (33);
			\draw [->] (15) ..controls (135.2bp,496.8bp) and (148.32bp,483.68bp)  .. (13);
			\draw [->] (15) ..controls (110.0bp,496.13bp) and (110.0bp,486.97bp)  .. (14);
			\draw [->] (6) --(4);
			\draw [->] (36) ..controls (167.83bp,856.66bp) and (156.74bp,843.99bp)  .. (1);
			\draw [->] (36) ..controls (193.23bp,856.13bp) and (194.38bp,846.97bp)  .. (40);
			\draw [->] (37) ..controls (55.0bp,1000.1bp) and (55.0bp,990.97bp)  .. (39);
			\draw [->] (37) ..controls (34.156bp,999.64bp) and (24.765bp,985.71bp)  .. (19.0bp,972.0bp) .. controls (3.1542bp,934.31bp) and (0.0bp,922.88bp)  .. (0.0bp,882.0bp) .. controls (0.0bp,882.0bp) and (0.0bp,882.0bp)  .. (0.0bp,306.0bp) .. controls (0.0bp,228.53bp) and (56.874bp,150.79bp)  .. (42);
			\draw [->] (10) ..controls (116.08bp,352.24bp) and (118.34bp,342.67bp)  .. (8);
			\draw [->] (10) ..controls (90.488bp,352.46bp) and (81.565bp,340.78bp)  .. (21);
			\draw [->] (8)--(5);
			\draw [->] (41) ..controls (127.0bp,1000.1bp) and (127.0bp,990.97bp)  .. (26);
			\draw [->] (41) ..controls (101.8bp,1000.8bp) and (88.685bp,987.68bp)  .. (39);
			\draw [->] (35) ..controls (55.0bp,527.29bp) and (55.0bp,392.8bp)  .. (21);
			\draw [->] (29) ..controls (317.8bp,280.8bp) and (304.68bp,267.68bp)  .. (27);
			\draw [->] (29) ..controls (296.78bp,282.89bp) and (257.24bp,263.12bp)  .. (6);
			\draw [->] (38)--(1);
			\draw [->] (38)--(2);
			\draw [->] (45) ..controls (101.8bp,1072.8bp) and (88.685bp,1059.7bp)  .. (37);
			\draw [->] (45) ..controls (127.0bp,1072.1bp) and (127.0bp,1063.0bp)  .. (41);
			\draw [->] (45) ..controls (146.39bp,1070.9bp) and (155.74bp,1057.0bp)  .. (163.0bp,1044.0bp) .. controls (203.54bp,971.55bp) and (242.05bp,881.2bp)  .. (44);
			\draw [->] (18) ..controls (120.92bp,568.24bp) and (118.66bp,558.67bp)  .. (15);
			\draw [->] (18) ..controls (146.51bp,568.46bp) and (155.43bp,556.78bp)  .. (17);
			\draw [->] (39) ..controls (55.0bp,911.2bp) and (55.0bp,867.25bp)  .. (2);
			\draw [->] (22) ..controls (199.0bp,640.13bp) and (199.0bp,630.97bp)  .. (16);
			\draw [->] (22) ..controls (173.8bp,640.8bp) and (160.68bp,627.68bp)  .. (18);
			\draw [->] (22) ..controls (224.2bp,640.8bp) and (237.32bp,627.68bp)  .. (20);
			\draw [->] (17) ..controls (182.0bp,496.13bp) and (182.0bp,486.97bp)  .. (13);
			\draw [->] (43) ..controls (235.63bp,148.7bp) and (230.66bp,146.25bp)  .. (226.0bp,144.0bp) .. controls (197.52bp,130.25bp) and (164.83bp,115.09bp)  .. (42);
			\draw [->] (43) ..controls (236.8bp,136.8bp) and (223.68bp,123.68bp)  .. (3);
			\draw [->] (20) ..controls (264.92bp,568.24bp) and (262.66bp,558.67bp)  .. (9);
			\draw [->] (20) ..controls (240.17bp,569.06bp) and (221.82bp,554.21bp)  .. (17);
			\draw [->] (42) ..controls (110.0bp,64.131bp) and (110.0bp,54.974bp)  .. (0);
			\draw [->] (3) ..controls (162.32bp,65.085bp) and (146.7bp,51.029bp)  .. (0);
			\draw [->] (44) ..controls (292.7bp,783.97bp) and (302.12bp,770.05bp)  .. (307.0bp,756.0bp) .. controls (326.0bp,685.22bp) and (326.0bp,595.42bp)  .. (32);
			\draw [->] (44) ..controls (271.0bp,784.13bp) and (271.0bp,774.97bp)  .. (34);
			\draw [->] (13) ..controls (207.2bp,424.8bp) and (220.32bp,411.68bp)  .. (7);
			\draw [->] (13) ..controls (182.0bp,424.13bp) and (182.0bp,414.97bp)  .. (12);
			\draw [->] (11) ..controls (173.8bp,280.8bp) and (170.68bp,267.68bp)  .. (5);
			\draw [->] (11) --(6);
			\draw [->] (33) ..controls (271.0bp,640.13bp) and (271.0bp,630.97bp)  .. (20);
			\draw [->] (19) ..controls (254.0bp,424.13bp) and (254.0bp,414.97bp)  .. (7);
			\draw [->] (24) ..controls (101.8bp,640.8bp) and (88.685bp,627.68bp)  .. (35);
			\draw [->] (24) ..controls (127.0bp,640.13bp) and (127.0bp,630.97bp)  .. (18);
			\draw [->] (14) ..controls (110.0bp,424.13bp) and (110.0bp,414.97bp)  .. (10);
			\draw [->] (14) ..controls (135.2bp,424.8bp) and (148.32bp,411.68bp)  .. (12);
			\draw [->] (12) ..controls (162.49bp,352.46bp) and (153.57bp,340.78bp)  .. (8);
			\draw [->] (12) --(11);
			\draw [->] (40) ..controls (199.0bp,784.13bp) and (199.0bp,774.97bp)  .. (23);
			\draw [->] (40) ..controls (224.2bp,784.8bp) and (237.32bp,771.68bp)  .. (34);
			\draw [->] (21) ..controls (50.661bp,247.51bp) and (47.371bp,148.61bp)  .. (74.0bp,72.0bp) .. controls (77.662bp,61.464bp) and (83.876bp,51.001bp)  .. (0);
		\end{tikzpicture}
		\caption{The Hasse diagram of the $3$-torsion classes in the $3$-cluster tilting subcategory $\mc_3^3$ of the higher Auslander Algebra $A_{3}^{3}$.}
		\label{fig:Hasse}
	\end{figure}
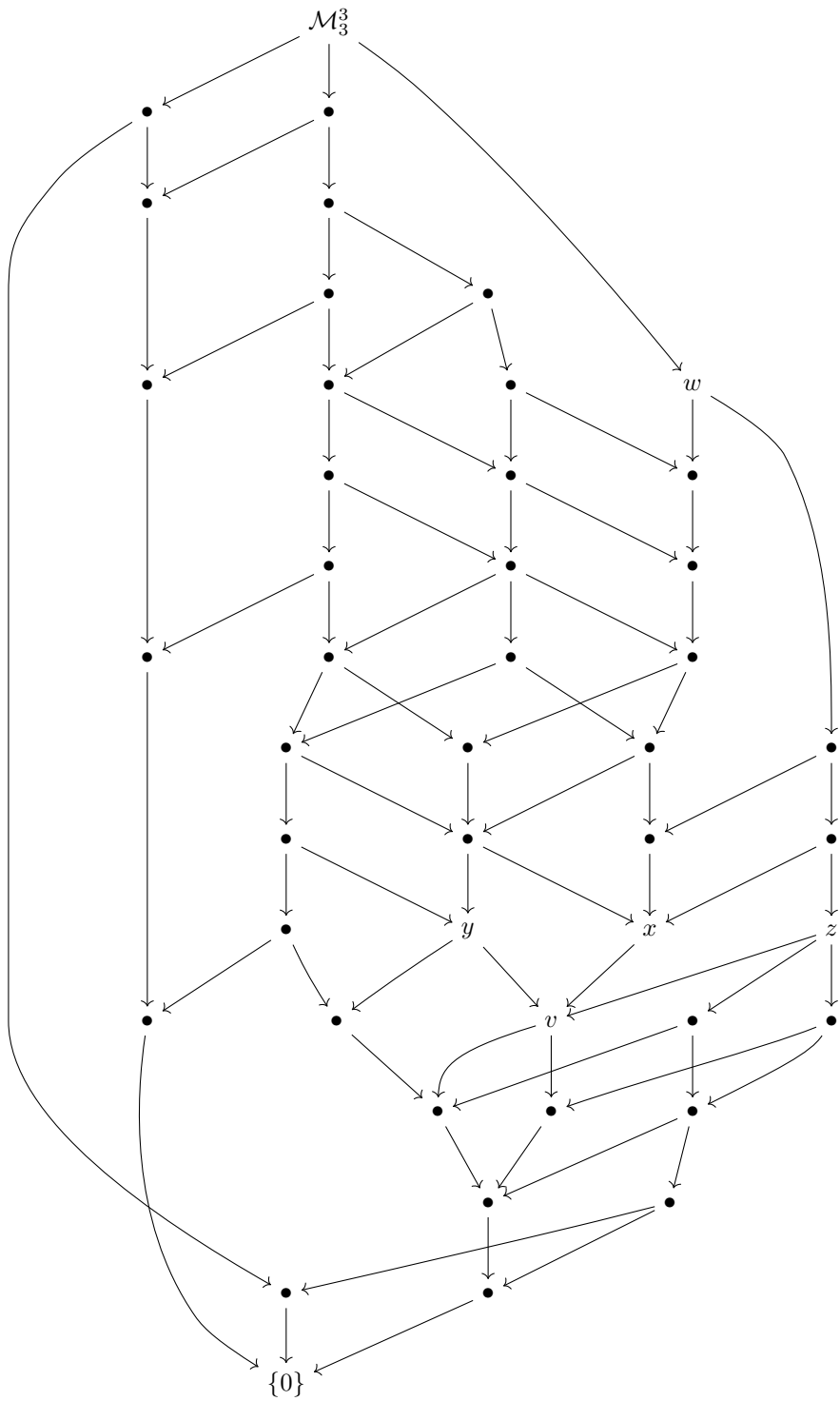 
	
	\section{Higher Nakayama algebras}\label{sec:Higher NA}
	Higher Nakayama algebras were introduced in \cite{Higher Nakayama} as a higher-dimensional generalisation of classical Nakayama algebras. In this section we extend the combinatorial description of higher torsion classes from \cref{thm:higherAuslanderalg} to the setup of higher Nakayama algebras. We first consider higher Nakayama algebras of type $\mathbb{A}$ in \cref{subsec:higher NA type A}, before moving on to type $\mathbb{A}_\infty^\infty$ in \cref{subsec: higher NA type A-infinity}.
	
	\subsection{Higher Nakayama algebras of type \texorpdfstring{$\mathbb{A}$}{A}}\label{subsec:higher NA type A}
	
	We start by giving a brief introduction to the construction of higher Nakayama algebras of type $\mathbb{A}$. Let $n$ and $d$ be positive integers, and recall the definitions of $\os_{n}^{d}$, $A_n^d$, $\mc_n^d$ and $M_x$ from \cref{Background on higher Auslander algebras}.

	A \textit{(connected) Kupisch series of type }$\mathbb{A}_n$ is a tuple $\underline{\ell}=(\ell_0,\ell_1,\dots, \ell_{n-1})$ of positive integers satisfying 
	\[
	\ell_0=1 \quad \text{and} \quad 2\leq \ell_i\leq \ell_{i-1}+1  \quad \text{for} \quad i=1,\ldots,n-1. 
	\]
	Given such a Kupisch series $\underline{\ell}$, consider the subset
	\[
	\os_{\underline{\ell}}^{d+1}\colonequals\{y\in \os_{n}^{d+1}\mid \ell\ell(y)\leq \ell_{y_d}\}
	\]
	where $\ell\ell(y)=y_d-y_0+1$. Note that $\ell\ell(y)$ is equal to the Loewy length of the module $M_y$ in $\operatorname{mod}A_n^d$, see \cite[Lemma 2.9]{Higher Nakayama}. For $d\geq 2$, the $d$-\textit{th Nakayama algebra with Kupisch series $\underline{\ell}$} is the idempotent quotient
	\[
	A_{\underline{\ell}}^d \colonequals A_n^d/(\os_{n}^{d}\setminus\os_{\underline{\ell}}^{d}).
	\]
	In other words, if we let $e_{\underline{\ell}}$ be the idempotent consisting of the sum of the vertices in $\os_{n}^{d}\setminus\os_{\underline{\ell}}^{d}$, then $A_{\underline{\ell}}^d$ is isomorphic to $A_n^d/A_n^d e_{\underline{\ell}}A_n^d$. It follows from \cite[Proposition 2.24]{Higher Nakayama} that the subcategory $\mc_{\underline{\ell}}^d \colonequals \mc_{n}^d\cap \operatorname{mod}A_{\underline{\ell}}^d$ is $d$-cluster tilting in $\operatorname{mod}A_{\underline{\ell}}^d$, and that $\mc_{\underline{\ell}}^d=\operatorname{add}(M_{\underline{\ell}}^d)$ for $M_{\underline{\ell}}^d=\bigoplus_{x\in \os_{\underline{\ell}}^{d+1}}M_x$.
	
	Note that the isomorphism classes of indecomposable modules in $\mc_{\underline{\ell}}^d$ are in bijection with elements of $\os_{\underline{\ell}}^{d+1}$. Using our results, we can characterise the subsets of $\os_{\underline{\ell}}^{d+1}$ which correspond to higher torsion classes in $\mc_{\underline{\ell}}^d$. We use the notation
	\begin{align*}
		\uc_I\colonequals \add \{ M_{y} \in \mc_{\underline{\ell}}^d \mid y \in I\}
	\end{align*}
	for the subcategory of $\mc_{\underline{\ell}}^d$ associated to a subset $I \subseteq \os_{\underline{\ell}}^{d+1}$.
	
	\begin{theorem}\label{thm:higherNakayamaAlg}
		Let $\underline{\ell}$ be a Kupisch series of type $\mathbb{A}_n$ and consider a subset $I \subseteq \os_{\underline{\ell}}^{d+1}$. The subcategory $\uc_I$ is a $d$-torsion class in $\mc_{\underline{\ell}}^d$ if and only if the following hold for any elements $x,z\in \os_{\underline{\ell}}^{d+1}$:
		\begin{enumerate}
			\item \label{thm:higherNakayamaAlg:1} If $x\leq z$ and $x_d=z_d$, then $x\in I$ implies $z\in I$.
			\item \label{thm:higherNakayamaAlg:2} If $x\rightsquigarrow \tau_d(z)$ and $x,z\in I$, then any $y\in \os_{\underline{\ell}}^{d+1}$ with $y_i \in \{x_i,z_i\}$ for each $i$ must be in $I$. 
		\end{enumerate}
	\end{theorem}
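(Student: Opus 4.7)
The plan is to mirror the argument for \cref{thm:higherAuslanderalg}, restricted to the Kupisch-constrained setting. By \cref{cor: characterisation} combined with \cref{prop: extension closure indec}, it suffices to verify that condition~(\ref{thm:higherNakayamaAlg:1}) is equivalent to $\uc_I$ being closed under $d$-quotients in $\mc_{\underline{\ell}}^d$, and that condition~(\ref{thm:higherNakayamaAlg:2}) is equivalent to $\uc_I$ being closed under $d$-extensions with indecomposable end terms in $\mc_{\underline{\ell}}^d$.

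For the $d$-quotient direction, my plan is to apply \cref{prop:IntersectionMapOfPoset}(\ref{prop:IntersectionMapOfPoset:3}) to the exact inclusion $\operatorname{mod}A_{\underline{\ell}}^d \hookrightarrow \operatorname{mod}A_n^d$ arising from the idempotent quotient $A_n^d \twoheadrightarrow A_{\underline{\ell}}^d$. Since $\operatorname{mod}A_{\underline{\ell}}^d$ consists of the $A_n^d$-modules annihilated by a fixed two-sided ideal, it is closed under quotients, so the proposition yields that $\mc_{\underline{\ell}}^d$ is closed under $d$-quotients in $\mc_n^d$. \cref{rem: d-quotients inherited} then reduces closure inside $\mc_{\underline{\ell}}^d$ to closure inside $\mc_n^d$, and \cref{cor: dqOnSumOfIndec} supplies the combinatorial description. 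The necessary numerical check is that for $x \in \os_{\underline{\ell}}^{d+1}$ with $x \leq y$ and $x_d=y_d$ one has $\ell\ell(y) \leq \ell\ell(x) \leq \ell_{x_d} = \ell_{y_d}$, so $y$ automatically lies in $\os_{\underline{\ell}}^{d+1}$; this matches condition~(\ref{thm:higherNakayamaAlg:1}) precisely.

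For the $d$-extension direction, the plan is to establish a Kupisch-restricted analogue of \cref{prop: ext descrip}: for $x,z \in \os_{\underline{\ell}}^{d+1}$ with $x \rightsquigarrow \tau_d(z)$, the minimal $d$-extension in $\mc_{\underline{\ell}}^d$ from $M_x$ to $M_z$ has middle terms $E_k = \bigoplus_{y \in Z_k \cap \os_{\underline{\ell}}^{d+1}} M_y$. One route is to mimic \cref{Example:MinimaldCokernel} inside $\mc_{\underline{\ell}}^d$, computing successive minimal left $\mc_{\underline{\ell}}^d$-approximations instead of $\mc_n^d$-approximations, and identifying the resulting indecomposable summands combinatorially. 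With this description in hand, condition~(\ref{thm:higherNakayamaAlg:2}) becomes exactly the requirement that $\uc_I$ contain every middle-term summand, matching closure under $d$-extensions with indecomposable end terms.

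The main obstacle I anticipate is this second step. Unlike the Auslander case, it is \emph{not} true that all abstract middle indices $y \in Z_k$ lie in $\os_{\underline{\ell}}^{d+1}$; the worst case $y=(x_0,z_1,\ldots,z_d)$ has Loewy length $z_d-x_0+1$, which may exceed $\ell_{z_d}$. Consequently the $\mc_n^d$-extension of \cref{prop: ext descrip} can fail to live in $\mc_{\underline{\ell}}^d$, and it must be checked carefully that the $\mc_{\underline{\ell}}^d$-cokernel construction retains exactly the summands indexed by $y \in Z_k \cap \os_{\underline{\ell}}^{d+1}$ and introduces no spurious new ones. Once this technical point is handled, the remainder of the argument is a direct translation of the proof of \cref{thm:higherAuslanderalg}.
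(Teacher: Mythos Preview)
Your treatment of $d$-quotients is essentially the paper's: use \cref{prop:IntersectionMapOfPoset}\eqref{prop:IntersectionMapOfPoset:3} and \cref{rem: d-quotients inherited} to transport the question to $\mc_n^d$, then invoke \cref{cor: dqOnSumOfIndec}. The Loewy-length check you mention is exactly what is needed.

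The $d$-extension direction, however, contains a genuine gap. You assert that the minimal $d$-extension in $\mc_{\underline{\ell}}^d$ between $M_x$ and $M_z$ (for $x\rightsquigarrow\tau_d(z)$) has middle terms $\bigoplus_{y\in Z_k\cap\os_{\underline{\ell}}^{d+1}}M_y$, but you do not prove this, and it is not clear. Two problems arise. First, you need that a \emph{non-trivial} $d$-extension exists in $\mc_{\underline{\ell}}^d$: while $\Ext^d_{A_n^d}(M_z,M_x)\neq 0$, nothing you have said rules out $\Ext^d_{A_{\underline{\ell}}^d}(M_z,M_x)=0$, in which case closure under $d$-extensions yields nothing while condition~\eqref{thm:higherNakayamaAlg:2} is still a non-vacuous demand. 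Second, even granting existence, your claim that the minimal representative is obtained by simply deleting the summands outside $\os_{\underline{\ell}}^{d+1}$ is not automatic; deleting summands from an exact sequence generally destroys exactness, and your suggested route via \cref{Example:MinimaldCokernel} would require tracking left $\mc_{\underline{\ell}}^d$-approximations of cokernels computed in $\operatorname{mod}A_{\underline{\ell}}^d$ — a computation you have not carried out.

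The paper sidesteps this entirely. It never describes the minimal $d$-extension in $\mc_{\underline{\ell}}^d$. For the direction ``$d$-torsion class $\Rightarrow$ condition~\eqref{thm:higherNakayamaAlg:2}'', it instead exploits the already-established condition~\eqref{thm:higherNakayamaAlg:1}. If $z_d-x_0+1\leq\ell_{z_d}$, every middle term of the $\mc_n^d$-extension from \cref{prop: ext descrip} already lies in $\os_{\underline{\ell}}^{d+1}$, and the argument is immediate. If $z_d-x_0+1>\ell_{z_d}$, the paper replaces $x,z$ by auxiliary elements $\overline{x},\overline{z}\in I$ (obtained from $x,z$ via condition~\eqref{thm:higherNakayamaAlg:1}) chosen so that $\overline{z}_d-\overline{x}_0+1\leq\ell_{\overline{z}_d}$; the desired $y$ then dominates a middle term $w$ of the $\overline{x}$--$\overline{z}$ extension, and one more application of condition~\eqref{thm:higherNakayamaAlg:1} finishes. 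The key insight is that closure under $d$-quotients lets you trade the bad pair $(x,z)$ for a good one, avoiding any structural analysis of extensions inside $\mc_{\underline{\ell}}^d$.
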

	
	\begin{proof}
		By \cref{cor: dqOnSumOfIndec}, condition (\ref{thm:higherNakayamaAlg:1}) is equivalent to $\uc_I$ being closed under $d$-quotients in $\mc^d_{n}$. For objects in $\mc_{\underline{\ell}}^d$, the minimal $d$-quotients in $\mc_{\underline{\ell}}^d$ are the same as the minimal $d$-quotients in $\mc^d_{n}$ by \cref{prop:IntersectionMapOfPoset}\eqref{prop:IntersectionMapOfPoset:3} and \cref{rem: d-quotients inherited}. Hence, condition (\ref{thm:higherNakayamaAlg:1}) is also equivalent to $\uc_I$ being closed under $d$-quotients in $\mc_{\underline{\ell}}^d$. 
		
		Note next that $d$-extensions in $\mc_{\underline{\ell}}^d$ coincide with $d$-extensions in $\mc_n^d$ with all terms in $\mc_{\underline{\ell}}^d$. By \cref{prop: ext descrip} and \cref{rem: min d-ext}, condition (\ref{thm:higherNakayamaAlg:2}) hence implies that $\uc_I$ is closed under \mbox{$d$-extensions} by indecomposables in $\mc_{\underline{\ell}}^d$. Assuming both (\ref{thm:higherNakayamaAlg:1}) and (\ref{thm:higherNakayamaAlg:2}) thus yields that $\uc_I$ is closed under all $d$-extensions in $\mc_{\underline{\ell}}^d$ by \cref{prop: extension closure indec}, so $\uc_I$ is a $d$-torsion class in $\mc_{\underline{\ell}}^d$ by \cref{cor: characterisation}.
		
		It remains to show that if $\uc_I$ is a $d$-torsion class in $\mc_{\underline{\ell}}^d$, then condition (\ref{thm:higherNakayamaAlg:2}) is satisfied. We will use that we already know condition (\ref{thm:higherNakayamaAlg:1}) holds. Consider $x, z\in I$ with $x\rightsquigarrow \tau_d(z)$, and suppose that $y\in \os_{\underline{\ell}}^{d+1}$ satisfies  $y_i \in \{x_i,z_i\}$ for each $i$. We need to show that $y \in I$.
		
		As $x\rightsquigarrow \tau_d(z)$, there is a minimal $d$-extension
		\begin{align}\label{d-extension HNA}
			0\rightarrow M_x \rightarrow E_1\rightarrow \cdots \rightarrow E_d \rightarrow M_{z} \rightarrow 0
		\end{align}
		in $\mc_n^d$ with $M_y$ as a direct summand in one of the middle terms by \cref{prop: ext descrip} and \cref{rem: min d-ext}. Let us first assume $z_d-x_0+1\leq \ell_{z_d}$. Using that $x, z\in \os_{\underline{\ell}}^{d+1}$ and $x_0 \leq z_0-1$, we see that all the terms in (\ref{d-extension HNA}) are in $\mc_{\underline{\ell}}^d$ in this case. Hence, we must have $M_y\in \uc_I$ and $y\in I$ as $\uc_I$ is closed under $d$-extensions in $\mc_{\underline{\ell}}^d$.
		
		Consider now the case $z_d-x_0+1> \ell_{z_d}$. Note that we have $x\leq y$. If $y_d=x_d$, condition (\ref{thm:higherNakayamaAlg:1}) hence yields that $y\in I$, so we can assume $y_d=z_d$. Now $y_0=x_0$ would contradict the assumption $z_d-x_0+1> \ell_{z_d}$ as $y\in \os_{\underline{\ell}}^{d+1}$, so we must have $y_0=z_0$.
		
		If $y=z$, we have $y\in I$, so assume that $y_i=x_i$ for some $1\leq i\leq d-1$. This ensures that $k \colonequals \min\{i\mid z_{i-1}\leq x_i\}$ exists. Note that for $j<k$, we have $z_{j-1}-1\geq x_j$. Combining this with the inequality 
		$x_j \geq z_{j-1}-1$
		coming from $x\rightsquigarrow \tau_d(z)$, we get that $x_j=z_{j-1}-1$ for $j>0$.
		Let $$w=(z_0,z_1,\ldots, z_{k-1}, x_k,\ldots, x_{d-1}, z_d)=(x_1+1,x_2+1,\ldots, x_{k-1}+1,z_{k-1}, x_k,\ldots, x_{d-1}, z_d),$$ and observe that $w \in \os_{\underline{\ell}}^{d+1}$ with $w\leq y$ and $w_d=y_d$ since $y$ is of the form $$y=(z_0, z_1, \dots, z_{k-1}, y_k, \dots, y_{d-1}, z_d)$$ under our current assumptions. 
		Consequently, it suffices to show that $w\in I$, as this implies $y \in I$ by condition (\ref{thm:higherNakayamaAlg:1}). 
		
		To this end, define
		\begin{align*}
			\overline{x}&=(z_0,z_1,\ldots, z_{k-1}, x_k, \ldots, x_d) \\
			\overline{z}&=(z_0+1,z_1+1,\ldots, z_{k-1}+1, z_k, \ldots, z_d).
		\end{align*}
		We see that $\overline{x},\overline{z} \in \os_{\underline{\ell}}^{d+1}$.
		Observe moreover that $x\leq \overline{x}$ with $\overline{x}_d=x_d$ and $z\leq \overline{z}$ with $\overline{z}_d=z_d$, so $\overline{x},\overline{z}\in I$ by condition (\ref{thm:higherNakayamaAlg:1}). Furthermore, we have $\overline{x}\rightsquigarrow \tau_d(\overline{z})$ and $w_i\in \{\overline{x}_i,\overline{z}_i\}$ for all $i$. Finally, notice that $\overline{z}_d-\overline{x}_0+1=z_d-z_0+1\leq \ell_{\overline{z}_d}$. It follows that $w\in I$ by the same argument as earlier in this proof, and we can conclude that $y \in I$.
	\end{proof}
	
	The following example illustrates the use of \cref{thm:higherNakayamaAlg}.
	
	\begin{example}\label{ex:higher nakayama}
		Fix $n=4$, $d=2$ and $\underline{\ell} = (1,2,2,3)$. The higher Nakayama algebra $A_{\underline{\ell}}^{2}$ has a $2$-cluster tilting subcategory $\mc_{\underline{\ell}}^{2}$, whose associated quiver can be found in \cref{fig:Higher Nakayama}. Note that we identify the indecomposable objects of $\mc_{\underline{\ell}}^{2}$ with $\os_{\underline{\ell}}^{3}$, and that we use a shortened notation for the sake of simplicity. The following subcategories are examples of $2$-torsion classes in $\mc_{\underline{\ell}}^{2}$:
		\begin{itemize}
			\item $\add\{0\}$
			\item $\mc_{\underline{\ell}}^{2}$
			\item $\add \{000, 133, 222, 223, 233, 333\}$
			\item $\add \{112,113,122,123,222,133,223,233,333\}$
		\end{itemize}
		
		\begin{figure}[htb]
			\centering
			\begin{tikzpicture}[xscale=1]
				\node (000) at (-1,0) {000};
				\node (111) at (3,0) {111};
				\node (222) at (7,0) {222};
				\node (333) at (11,0) {333};
				
				\node (001) at (0,1) {001};
				\node (011) at (2,1) {011};
				\node (112) at (4,1) {112};
				\node (122) at (6,1) {122};	
				\node (223) at (8,1) {223};
				\node (233) at (10,1) {233};
				
				\node (113) at (5,2) {113};
				\node (123) at (7,2) {123};	
				\node (133) at (9,2) {133};
				
				\draw[->] (000) -- (001);
				\draw[->] (001) -- (011);
				\draw[->] (011) -- (111);
				\draw[->] (111) -- (112);
				\draw[->] (112) -- (122);
				\draw[->] (122) -- (222);
				\draw[->] (112) -- (113);
				\draw[->] (113) -- (123);
				\draw[->] (123) -- (223);
				\draw[->] (122) -- (123);
				\draw[->] (123) -- (133);
				\draw[->] (133) -- (233);
				\draw[->] (222) -- (223);
				\draw[->] (223) -- (233);
				\draw[->] (233) -- (333);
				
				\draw[->, dashed] (111)--(000);
				\draw[->, dashed] (222)--(111);
				\draw[->, dashed] (333)--(222);
				
				\draw[->, dashed] (223) to[bend left=23] (112);
				\draw[->, dashed] (233) to[bend left=23] (122);
				
			\end{tikzpicture}
			\caption{
				The quiver of the $2$-cluster tilting subcategory $\mc_{\underline{\ell}}^{2}$ in \cref{ex:higher nakayama}}.
			\label{fig:Higher Nakayama}
		\end{figure}
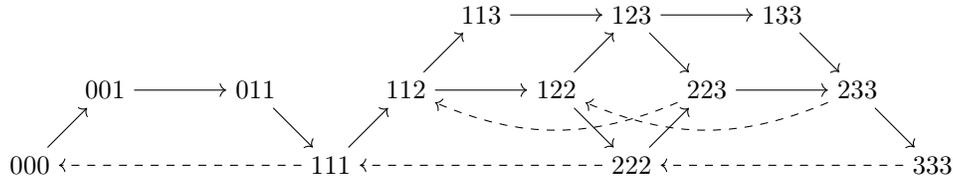
		
	\end{example}

	\begin{remark}\label{rem:results-higher-Nakayama}
		With the results in this section, we can extend \Cref{alg:make torsion class,alg: find torsion classes}  to the higher Nakayama setting with a few changes:
		\begin{itemize}
			\item The input needs to include the Kupisch series $\underline{\ell}$.
			\item Elements must be chosen from $\os_{\underline{\ell}}^{d+1}$, rather than $\os_n^{d+1}$.
		\end{itemize}
		We have implemented the extended algorithm in the Google Colab Notebook associated to this paper.\footnote{\url{https://colab.research.google.com/drive/172Q-UZHvdPOhngGkl1T_xdYLzntg31dY}}
	\end{remark}
	
	\subsection{Higher Nakayama algebras of type \texorpdfstring{$\mathbb{A}_\infty^\infty$}{A-infinity}}
	\label{subsec: higher NA type A-infinity}
	
	The goal of this subsection is to further extend the combinatorial description of $d$-torsion classes to higher Nakayama algebras of type $\mathbb{A}_\infty^\infty$ as introduced in \cite{Higher Nakayama}. We start by giving a brief introduction to this class of algebras.
	
	In contrast to earlier in this paper, we now need to consider quivers with infinitely many vertices. Hence, associated to a quiver with relations is a category $\cc$ rather than an algebra. The objects of $\cc$ are the vertices of the quiver, and a basis of the morphisms spaces are given by the arrows modulo the given relations. A \textit{right module over} $\cc$ is a $\kk$-linear functor $M\colon \cc^{\operatorname{op}}\to \operatorname{Mod}\kk$. The module $M$ is called \textit{finite-dimensional} if the sum $\bigoplus_{x\in \cc}M(x)$ is finite-dimensional. We let $\operatorname{fd}\cc$ denote the category of finite-dimensional right modules over $\cc$. For more details, see \cite[Sections 1.1 and 1.2]{Higher Nakayama}.
	
	Let $\os^d$ denote the set of non-decreasing $d$-tuples $x=(x_0,\dots , x_{d-1})$ over $\mathbb{Z}$. Consider $\mathbb{Z}^d$ as a $\kk$-linear category given by the $\kk$-linearisation of the poset $\mathbb{Z}^d$ endowed with the product order. The \textit{mesh category of type }$\mathbb{Z}\mathbb{A}_\infty^{d-1}$ is defined to be the additive quotient
	\[
	A_\infty^{d} \colonequals \mathbb{Z}^d/(\mathbb{Z}^d\setminus \os^d).
	\] 
	Note that $A_\infty^{d}$ can be represented by the opposite of an infinite quiver $Q^{d}$ with relations. The vertices of $Q^d$ are elements in $\os^d$, and there is an arrow from vertex $x$ to vertex $y$ if $y_i = x_i + 1$ for exactly one $i= 0,\ldots,  d-1$ and $y_j = x_j$ for $j\neq i$. The relations of $A_\infty^{d}$ are given by a certain admissible ideal making squares commutative and sending certain compositions of two arrows to zero, see \cite[Section 3.1]{Higher Nakayama} for more details. 
	
	Following \cite[Appendix B]{Higher Nakayama}, we say that a \textit{Kupisch series of type} $\mathbb{A}_\infty^\infty$ is an infinite tuple $\underline{\ell} =(\dots, \ell_{-1}, \ell_0, \ell_1, \dots )$ where $\ell_i$ is either a non-negative integer or equal to $\infty$, and where the inequality
	\[
	\ell_i\leq \ell_{i-1}+1
	\]
	holds for all $i\in \mathbb{Z}$. We write $\operatorname{KS}(\mathbb{A}_\infty^\infty)$ for the set of Kupisch series of type $\mathbb{A}_\infty^\infty$. Given such a Kupisch series $\underline{\ell}$, define the subset
	\[
	\os_{\underline{\ell}}^{d+1}\colonequals\{y\in \os^{d+1}\mid \ell\ell(y)\leq \ell_{y_{d}}\},
	\]
	where $\ell\ell(y)=y_d-y_0+1$ as before. For $d\geq 2$, the category $A_{\underline{\ell}}^d$ is defined as the idempotent quotient
	\[
	A_{\underline{\ell}}^d \colonequals A_{\infty}^{d}/(\os^d\setminus\os_{\underline{\ell}}^{d}).
	\]
	
	Note that we have an inclusion $\operatorname{fd}A_{\underline{\ell}}^d\to \operatorname{fd}A_\infty^d$ between the categories of finite-dimensional modules. Associated to the Kupisch series $\underline{\ell}$, we also have the subcategory 
	\[
	\mc_{\underline{\ell}}^d\colonequals \add\{M_x \in \operatorname{fd}A_\infty^d \mid x\in \os_{\underline{\ell}}^{d+1}\}
	\]
	of $\operatorname{fd}A_\infty^d$. Here, we use the notation $M_x$ for the indecomposable $A_\infty^d$-module with support in all vertices $y\in \os^d$ satisfying $x_0\leq y_0\leq x_1\leq \cdots \leq x_{d-1}\leq y_{d-1}\leq x_d$. This extends the analogous definition in \cref{Background on higher Auslander algebras}. By \cite[Appendix B]{Higher Nakayama}, the subcategory $\mc_{\underline{\ell}}^d$ is $d$-cluster tilting in $\operatorname{fd}A_{\underline{\ell}}^d$. For certain choices of $\underline{\ell}$, this gives an example of a $d$-cluster tilting subcategory of an abelian category with no non-zero projective or injective objects, e.g.\ if $\ell_i=\infty$ for all $i\in \mathbb{Z}$, see \cite[Proposition 3.6]{Higher Nakayama}.   
	
	Now consider the partial order on the set $\operatorname{KS}(\mathbb{A}_\infty^\infty)$ given by the product order. If $\underline{\ell}\leq \underline{\ell'}$, then we have a natural functor $A_{\underline{\ell}'}^d\to A_{\underline{\ell}}^d$ of categories. This gives an inclusion $\operatorname{fd}A_{\underline{\ell}}^d\to \operatorname{fd}A_{\underline{\ell}'}^d$ such that the equality
	\[
	\mc_{\underline{\ell}}^d=\mc_{\underline{\ell}'}^d\cap \operatorname{fd}A_{\underline{\ell}}^d
	\]
	holds. Similarly, if $\underline{\ell}^1\leq \underline{\ell}^2\leq \cdots$ is an increasing sequence of Kupisch series which converges to $\underline{\ell}\in \operatorname{KS}(\mathbb{A}_\infty^\infty)$ (in the natural way), then 
	\[
	\operatorname{fd}A_{\underline{\ell}}^d=\bigcup_{i\geq 1}\operatorname{fd}A_{\underline{\ell}^i}^d \quad \text{and} \quad \mc_{\underline{\ell}}^d=\bigcup_{i\geq 1}\mc_{\underline{\ell}^i}^d.
	\]
	A Kupisch series $\underline{\ell}\in \operatorname{KS}(\mathbb{A}_\infty^\infty)$ is called \textit{finite} if $\ell_j=0$ for all but finitely many $j\in \mathbb{Z}$. Note that for any $\underline{\ell}\in \operatorname{KS}(\mathbb{A}_\infty^\infty)$, we can find a sequence $\underline{\ell}^1\leq \underline{\ell}^2\leq \cdots$ of finite Kupisch series in $\operatorname{KS}(\mathbb{A}_\infty^\infty)$ which converges to $\underline{\ell}$. We use this to give a characterisation of the $d$-torsion classes in $\mc_{\underline{\ell}}^d$ in \cref{thm:higherNakayamaAlgInfinite} below.
	
	\begin{remark}\label{Remark: Finite Kupisch series}
		Let $\underline{\ell}$ be a Kupisch series of type $\mathbb{A}_n$. Then $\underline{\ell}$ can be identified with a finite Kupisch series $\underline{\ell}'$ of type $\mathbb{A}_\infty^\infty$ which is non-zero only in positions $0,\ldots, n-1$.  In this case there is a bijection between the sets $\os_{\underline{\ell}}^{d}$ and $\os_{\underline{\ell}'}^{d}$, so the algebra $A^d_{\underline{\ell}}$ and the category $A^d_{\underline{\ell}'}$ can be naturally identified. Note that up to isomorphism, the set $\os_{\underline{\ell}'}^{d}$ and the category $A^d_{\underline{\ell}'}$ remain unchanged when shifting $\underline{\ell}'$ some number of steps to the left or right.
		
		In general, if $\underline{\ell}$ is a finite Kupisch series of type $\mathbb{A}^\infty_\infty$, then $\underline{\ell}$ is obtained by gluing together shifts of Kupisch series of type $\mathbb{A}$. Hence, the set $\os^d_{\underline{\ell}}$ is in bijection with a disjoint union $\bigcup_{j=1}^m\os^d_{\underline{\ell}_j}$ where $\underline{\ell}_j$ is a Kupisch series of type $\mathbb{A}_{n_j}$ for some integer $n_j\geq 1$. The associated category $A^d_{\underline{\ell}}$ can thus be identified with a finite product \[A^d_{\underline{\ell}_1}\times \cdots \times A^d_{\underline{\ell}_m}\] where $A^d_{\underline{\ell}_j}$ is a higher Nakayama algebras of type $\mathbb{A}_{n_j}$. With this identification, the category $\operatorname{fd}A^d_{\underline{\ell}}$ 
		is equivalent to the product $\operatorname{mod}A^d_{\underline{\ell}_1}\times \cdots \times\operatorname{mod}A^d_{\underline{\ell}_m}$, and the $d$-cluster tilting subcategory $\mc^d_{\underline{\ell}}$ is equivalent to the product $\mc^d_{\underline{\ell}_1}\times \cdots \times \mc^d_{\underline{\ell}_m}$. Since any Kupisch series $\underline{\ell}$ of type $\mathbb{A}_\infty^\infty$ can be represented by a converging sequence of finite Kupisch series, it follows that the associated $d$-cluster tilting subcategory $\mc_{\underline{\ell}}^d$ is the union of finite products of $d$-cluster tilting subcategories of higher Nakayama algebras of type $\mathbb{A}$.
	\end{remark}
	
	As before, we use the notation
	\begin{align*}
		\uc_I\colonequals \add \{ M_{y} \in \mc_{\underline{\ell}}^d \mid y \in I\}
	\end{align*}
	for the subcategory of $\mc_{\underline{\ell}}^d$ associated to a subset $I \subseteq \os_{\underline{\ell}}^{d+1}$.
	
	\begin{theorem}\label{thm:higherNakayamaAlgInfinite}
		Let $\underline{\ell}$ be a Kupisch series of type $\mathbb{A}_\infty^\infty$ and consider a subset $I \subseteq \os_{\underline{\ell}}^{d+1}$.
		The subcategory $\uc_I$ is a $d$-torsion class in $\mc_{\underline{\ell}}^d$ if and only if the following hold for any elements $x,z\in \os_{\underline{\ell}}^{d+1}$:
		\begin{enumerate}
			\item \label{thm:HigherNakayamainf:1} If $x\leq z$ and $x_d=z_d$, then $x\in I$ implies $z\in I$.
			\item \label{thm:HigherNakayamainf:2} If $x\rightsquigarrow \tau_d(z)$ and $x,z\in I$, then any $y\in \os_{\underline{\ell}}^{d+1}$ with $y_i \in \{x_i,z_i\}$ for each $i$ must be in $I$. 
		\end{enumerate}
	\end{theorem}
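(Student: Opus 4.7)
The plan is to reduce \cref{thm:higherNakayamaAlgInfinite} to \cref{thm:higherNakayamaAlg} by a two-step approximation: first pass to finite Kupisch series of type $\mathbb{A}_\infty^\infty$ via a limiting argument, then unwind each finite Kupisch series as a product of type $\mathbb{A}$ pieces as in \cref{Remark: Finite Kupisch series}. Fix an increasing sequence $\underline{\ell}^1\leq \underline{\ell}^2\leq \cdots$ of finite Kupisch series in $\operatorname{KS}(\mathbb{A}_\infty^\infty)$ converging to $\underline{\ell}$, so that $\os_{\underline{\ell}}^{d+1}=\bigcup_i \os_{\underline{\ell}^i}^{d+1}$ and $\mc_{\underline{\ell}}^d=\bigcup_i \mc_{\underline{\ell}^i}^d$. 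Each inclusion $\operatorname{fd}A^d_{\underline{\ell}^i}\subseteq \operatorname{fd}A^d_{\underline{\ell}}$ is exact and closed under quotients (supports cannot grow under quotients), so \cref{prop:IntersectionMapOfPoset} and \cref{rem: d-quotients inherited} will apply.

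The first step is to prove the theorem for each finite $\underline{\ell}^i$. By \cref{Remark: Finite Kupisch series}, the subcategory $\mc_{\underline{\ell}^i}^d$ is equivalent to a finite product of $d$-cluster tilting subcategories of higher Nakayama algebras of type $\mathbb{A}$. Indecomposables in a product of additive categories lie in a single factor, and closure under $d$-extensions and $d$-quotients decomposes factor-wise, so \cref{cor: characterisation} shows that a subcategory of $\mc_{\underline{\ell}^i}^d$ is a $d$-torsion class if and only if each of its projections to the factors is a $d$-torsion class. Applying \cref{thm:higherNakayamaAlg} factor-wise then establishes \cref{thm:higherNakayamaAlgInfinite} with $\underline{\ell}$ replaced by $\underline{\ell}^i$.

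In the general case, the forward direction is immediate from \cref{prop:IntersectionMapOfPoset}\eqref{prop:IntersectionMapOfPoset:1}: if $\uc_I$ is a $d$-torsion class in $\mc_{\underline{\ell}}^d$, then each intersection $\uc_I\cap \mc_{\underline{\ell}^i}^d=\uc_{I\cap \os_{\underline{\ell}^i}^{d+1}}$ is a $d$-torsion class in $\mc_{\underline{\ell}^i}^d$, so the finite case supplies conditions (\ref{thm:HigherNakayamainf:1}) and (\ref{thm:HigherNakayamainf:2}) on each $\os_{\underline{\ell}^i}^{d+1}$, and the conditions transfer to $\os_{\underline{\ell}}^{d+1}$ since any specific triple of elements lies in a common $\os_{\underline{\ell}^i}^{d+1}$. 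Conversely, suppose $I$ satisfies (\ref{thm:HigherNakayamainf:1}) and (\ref{thm:HigherNakayamainf:2}), and set $I^i\colonequals I\cap \os_{\underline{\ell}^i}^{d+1}$; the conditions restrict immediately to $I^i$, since their conclusions assert membership in $I$ of elements already in $\os_{\underline{\ell}^i}^{d+1}$. The finite case thus makes each $\uc_{I^i}$ a $d$-torsion class in $\mc_{\underline{\ell}^i}^d$. Given any minimal $d$-extension or $d$-quotient in $\mc_{\underline{\ell}}^d$ with end terms in $\uc_I$, all its finitely many terms lie in some $\mc_{\underline{\ell}^i}^d$, where it remains a minimal diagram of the same type; closedness of $\uc_{I^i}$ then places the middle terms into $\uc_{I^i}\subseteq \uc_I$, and \cref{cor: characterisation} identifies $\uc_I$ as a $d$-torsion class.

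The main technical point is this last compatibility, namely that a minimal $d$-extension or $d$-quotient in $\mc_{\underline{\ell}}^d$ whose terms happen to all lie in $\mc_{\underline{\ell}^i}^d$ is automatically minimal of the same type when regarded in $\mc_{\underline{\ell}^i}^d$. For $d$-quotients this is precisely \cref{rem: d-quotients inherited}, and for $d$-extensions it follows because the exact inclusion of ambient abelian categories preserves both exactness of sequences with all terms in $\operatorname{fd}A^d_{\underline{\ell}^i}$ and the relevant Jacobson radicals of morphism spaces between such objects.
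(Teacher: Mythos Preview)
Your proof is correct and follows essentially the same approach as the paper: both reduce to finite Kupisch series via a converging sequence, decompose the finite case as a product of type $\mathbb{A}$ pieces using \cref{Remark: Finite Kupisch series} and \cref{thm:higherNakayamaAlg}, and then use \cref{prop:IntersectionMapOfPoset} together with the union description $\mc_{\underline{\ell}}^d=\bigcup_i\mc_{\underline{\ell}^i}^d$ for the limiting step. A minor remark: your emphasis on preservation of \emph{minimality} when passing to $\mc_{\underline{\ell}^i}^d$ is not actually needed---since the inclusion $\operatorname{fd}A^d_{\underline{\ell}^i}\subseteq\operatorname{fd}A^d_{\underline{\ell}}$ is exact, any $d$-extension or $d$-cokernel in $\mc_{\underline{\ell}}^d$ with all terms in $\mc_{\underline{\ell}^i}^d$ is already a $d$-extension or $d$-cokernel in $\mc_{\underline{\ell}^i}^d$, and closure of $\uc_{I^i}$ under (not necessarily minimal) $d$-extensions and $d$-quotients there suffices.
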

	
	\begin{proof}
		Choose an increasing sequence $\underline{\ell}^1\leq \underline{\ell}^2\leq \cdots$ of finite Kupisch series which converges to $\underline{\ell}$. Then we have that 
		\[
		\uc_I\cap \mc^d_{\underline{\ell}^i}=\uc_{I\cap \os_{\underline{\ell}^i}^{d+1}}
		\]
		for each $i \geq 1$. Fix $i\geq 1$, and set $J^i=I\cap \os_{\underline{\ell}^i}^{d+1}$. We have an equivalence between $\mc^d_{\underline{\ell}^i}$ and a finite product $\mc^d_{\underline{\ell}^i_1}\times \cdots \times\mc^d_{\underline{\ell}^i_m}$ as in \cref{Remark: Finite Kupisch series}. The subcategory $\uc_{J^i}$ of $\mc^d_{\underline{\ell}^i}$ is hence equivalent to a product $\uc_1\times \cdots \times \uc_m$, where $\uc_k$ is a subcategory of $\mc^d_{\underline{\ell}^i_k}$ for each $k$. Note that $\os_{\underline{\ell}^i}^{d+1}$ is in bijection with the disjoint union $\bigcup_{k=1}^m\os^{d+1}_{\underline{\ell}^i_k}$ as in \cref{Remark: Finite Kupisch series}. We let $J^i_k$ denote the intersection of $\os^{d+1}_{\underline{\ell}^i_k}$ with the image of $J^i$ under this bijection. Then we get $\uc_k=\uc_{J^i_k}$.
		
		Note that $I$ satisfies the conditions $(\ref{thm:HigherNakayamainf:1})$ and $(\ref{thm:HigherNakayamainf:2})$ in the statement if and only if the set $J^i$ satisfies the analogous conditions for each $i\geq 1$. Furthermore, this holds if and only if each $J^i_k$ satisfies the conditions of \cref{thm:higherNakayamaAlg}. By \cref{thm:higherNakayamaAlg}, this is again equivalent to $\uc_{J^i_k}$ being a $d$-torsion class in $\mc^d_{\underline{\ell}^i_k}$ for all $i$ and $k$, i.e.\ that $\uc_{J^i} \simeq
		\uc_{J^i_1}\times \cdots \times \uc_{J^i_m}$ is a $d$-torsion class in $\mc_{\underline{\ell}^i}^d$ for all $i\geq 1$. Hence, it suffices to show that $\uc_I$ is a $d$-torsion class in $\mc_{\underline{\ell}}^d$ if and only if $\uc_{J^i}$ is a $d$-torsion class in $\mc_{\underline{\ell}^i}^d$ for all $i\geq 1$. 
		
		If $\uc_I$ is a $d$-torsion class in $\mc_{\underline{\ell}}^d$, then $\uc_{J^i}=\uc_I\cap \mc^d_{\underline{\ell}^i}$ is a $d$-torsion class in $\mc_{\underline{\ell}^i}^d=\mc_{\underline{\ell}}^d\cap \operatorname{fd}A_{\underline{\ell}^i}^d$ for all $i\geq 1$ by \cref{prop:IntersectionMapOfPoset}\eqref{prop:IntersectionMapOfPoset:1}.  
		Conversely, assume $\uc_{J^i}$ is a $d$-torsion class in $\mc_{\underline{\ell}^i}^d$ for all $i\geq 1$. Since we have
		\[
		\uc_{J^1}\subseteq \uc_{J^2}\subseteq \cdots \quad 
		\text{and} \quad \uc_I=\bigcup_{i\geq 1}\uc_{J^i},
		\]
		the subcategory $\uc_I$ must be closed both under $d$-extensions and $d$-quotients in $\mc_{\underline{\ell}}^d=\bigcup_{i\geq 1}\mc_{\underline{\ell}^i}^d$, since $\uc_{J^i}$ is closed under $d$-extensions and $d$-quotients in $\mc_{\underline{\ell}^i}^d$ for all $i \geq 1$.
		By \cref{cor: characterisation}, this shows that $\uc_I$ is a $d$-torsion class.
	\end{proof}

	\section*{Acknowledgements}
	
	This project was started while the authors participated in the Junior Trimester Program ``New Trends in Representation Theory'' at the Hausdorff Research Institute for Mathematics in Bonn. We thank the Institute for their hospitality and inspiring working conditions.
	
	We are grateful to J.\ Asadollahi, P.\ J\o rgensen and S.\ Schroll for allowing HT to share the ideas of \cite{AJST} even before the first draft of that paper was finished. Without their generosity, this project would not have started. We thank S.\ Land for help with coding optimization. We would also like to thank the anonymous referee for their careful reading and helpful comments.

	JA was supported by the Max Planck Institute of Mathematics, and a DNRF Chair from the Danish National Research Foundation (grant no. DNRF156).
	JA, KMJ and SK would like to thank the Centre for Advanced Study in Oslo, where parts of this work was done.
	
	Parts of this work were carried out while JH visited Aarhus University. She would like to thank the university for the hospitality and the project ``Pure Mathematics in Norway'' funded by the Trond Mohn Foundation for supporting the stay.
	
	KMJ was funded by the Norwegian Research Council via the project "Higher homological algebra and tilting theory" (301046). 
	
	YP was supported by the French grant CHARMS (ANR--19--CE40--0017).
	
	HT was supported by the European Union’s Horizon 2020 research and innovation programme under the Marie Sklodowska-Curie grant agreement No 893654 and by the Deutsche Forschungsgemeinschaft (DFG, German Research Foundation) under Germany's Excellence Strategy Programme -- EXC-2047/1 -- 390685813.
	

\end{document}